\newcommand{\ld} {\langle}
\newcommand{\rd} {\rangle}
\newcommand{\mc} {\mathcal}
\newcommand{\la} {\lambda}
\newcommand{\noi} {\noindent}
\newcommand{\mb}{\mathbb}
\newcommand{\ds} {\displaystyle}
\newtheorem{thm}{Theorem}[section]
 \newtheorem{cor}[thm]{Corollary}
 \newtheorem{lem}[thm]{Lemma}
 \newtheorem{prop}[thm]{Proposition}
 \theoremstyle{definition}
 \newtheorem{defi}[thm]{Definition}
 \theoremstyle{remark}
 \newtheorem{rem}[thm]{Remark}
 \numberwithin{equation}{section}
\def\theequation{\@arabic{\c@section}.\@arabic{\c@equation}}
\begin{document}
\title[Fractional Kirchhoff system]{Nehari manifold for  fractional Kirchhoff\\ system with critical nonlinearity}
\author{J. M. do \'O}
\address{Department of Mathematics,\\
Bras\'{\i}lia University,\\ 70910-900, Bras\'ilia, DF, Brazil}
\email{jmbo@pq.cnpq.br}
\author[J. Giacomoni]{J. Giacomoni}
\address{
LMAP (UMR {E2S-UPPA} CNRS 5142),\\ Bat. IPRA, Avenue de l'Universit\'e,\\ F-64013 Pau, France}
\email{jacques.giacomoni@univ-pau.fr}
\author{P. K. Mishra}
\address{Department of Mathematics,\\ Federal University of Para\'{\i}ba,\\ 58051-900, Jo\~ao Pessoa-PB, Brazil}
\email{pawanmishra31284@gmail.com}
\thanks{Research supported in part by INCTmat/MCT/Brazil, CNPq and CAPES/Brazil.}
\subjclass{Primary 35J47; Secondary 35J20, 47G20}

\keywords{Nehari manifold, fractional Kirchhoff system, critical exponent, multiplicity}

\begin{abstract}
\noindent In this paper, we show the existence and multiplicity of positive solutions of the  following fractional Kirchhoff system\\
\begin{equation*}
\left\{
\begin{array}{rllll}
\mc L_M(u)&=\lambda f(x)|u|^{q-2}u+ \frac{2\alpha}{\alpha+\beta}\left|u\right|^{\alpha-2}u|v|^\beta &  \text{in } \Omega,\\
\mc L_M(v)&=\mu g(x)|v|^{q-2}v+ \frac{2\beta}{\alpha+\beta}\left|u\right|^{\alpha}|v|^{\beta-2}v & \text{in } \Omega,\\
u&=v=0 &\mbox{in } \mathbb{R}^{N}\setminus \Omega,
\end{array}
\right.
\end{equation*}
where $\mc L_M(u)=M\left(\ds \int_\Omega|(-\Delta)^{\frac{s}{2}}u|^2dx\right)(-\Delta)^{s} u $
is a double non-local operator due to  Kirchhoff term $M(t)=a+b t$ with $a, b>0$ and fractional Laplacian $(-\Delta)^{s}, s\in(0, 1)$. We consider that
$\Omega$ is a bounded domain in $\mathbb{R}^N$, {$2s<N\leq 4s$} with smooth boundary,  $f, g$ are sign changing continuous functions, $\lambda, \mu>0$ are {real} parameters, $1<q<2$, $\alpha, \beta\ge 2$ {and}  $\alpha+\beta=2_s^*={2N}/(N-2s)$ {is a fractional critical exponent}. Using the idea of Nehari manifold technique and a compactness result based on {classical idea of Brezis-Lieb Lemma},  we prove the existence of at least two positive solutions for  $(\lambda, \mu)$ lying in a suitable subset of $\mathbb R^2_+$.\\
\end{abstract}

\maketitle
\section{Introduction}
\noindent In this paper, we show the existence and multiplicity of positive solutions for the  following fractional Kirchhoff system\\
\begin{equation*}
(P_{\lambda,\mu})\;\left\{
\begin{array}{rllll}
\mc L_M(u)&=\lambda f(x)|u|^{q-2}u+ \frac{2\alpha}{\alpha+\beta}\left|u\right|^{\alpha-2}u|v|^\beta& \text{in } \Omega,\\
\mc L_M(v)&=\mu g(x)|v|^{q-2}v+ \frac{2\beta}{\alpha+\beta}\left|u\right|^{\alpha}|v|^{\beta-2}v& \text{in } \Omega,\\
u&=v=0 &\mbox{in } \mathbb{R}^{N}\setminus \Omega,
\end{array}
\right.
\end{equation*}
where $\mc L_M(u)=M\left(\ds \int_\Omega|(-\Delta)^{\frac{s}{2}}u|^2dx\right)(-\Delta)^{s} u $
is a double non-local operator emerged from the fusion of  Kirchhoff term $M(t)=a+b t$ with $a, b>0$ and fractional Laplacian $(-\Delta)^{s}, s\in(0, 1)$ (see \cite{MR3120682} for modeling of a vibrating string which gives rise to this kind of operators). We consider that
$\Omega$ is a bounded domain in $\mathbb{R}^N$, {$2s<N\leq 4s$} with smooth boundary,  $\lambda, \mu>0$ are {real} parameters, $1<q<2$, $\alpha, \beta\ge 2$ {and}  $\alpha+\beta=2_s^*={2N}/(N-2s)$ {is a fractional critical exponent}. \\
Define $f^+(x)=\max\{f(x), 0\}$ and  $g^+(x)=\max\{g(x), 0\}$. 
Now for the ease of reference, we impose the following condition on the continuous weights  $f, g$:
\begin{equation*}
\textbf{(fg)}\;\;
\begin{aligned}
f, g \in L^{\gamma_*}(\Omega), \text{ where }\gamma_*>\gamma={2^*_s}/{(2^*_s-q)} \text{ and } &f^+, g^+\not\equiv 0.
\end{aligned}
\end{equation*}

\noindent In this work, we prove the multiplicity of positive solutions for the system $(P_{\lambda, \mu})$ using a popular technique of Nehari manifold. We show the multiplicity result by extracting Palais-Smale sequences in the non-empty decompositions of Nehari manifold  for $(\lambda,\mu)$ lying in some suitable subset of $\mathbb R^2_+$.  In the presence of critical nonlinearity (the lack of compactness of the critical Sobolev embedding), the Kirchhoff term  may not allow the weak limit of Palais-smale sequence to be a weak solution of the problem. This makes this class of problems interesting to study.

\noindent To overcome the nonlocal behaviour caused by nonlocal fractional operator, we have adopted the idea of Caffarelli-Silvestre extension, see \cite{MR2354493} but in the process we lack the explicit form of extremal functions. We have used the suitable asymptotic estimates of the extremals, obtained in \cite{MR2911424, MR3117361}. Apart from these challenges, due to critical growth,  we  lack the  compactness which is needed in case of  $M\not \equiv1$ to get the pre-compactness of Palais-Smale sequences. Based on the classical idea of Brezis-Lieb Lemma, we have derived a compactness result (see Proposition \ref{crcmp} in section 4) to prove Theorem \ref{22mht1}.

\noindent For $u=v,\; \alpha=\beta,\; \lambda=\mu$ and $f=g$, the problem $(P_{\lambda,\mu})$ reduces into the scalar case.
 We cite \cite{PDH}, for related results in case of scalar equation, where authors have applied a classical idea of concentration-compactness Lemma due to P. L. Lions. In the local setting and in the absence of Kirchhoff term, the systems of linear and quasilinear elliptic equations have been studied in   \cite{MR2213895,MR1776922,MR1970963,MR2388846} and references therein.

\noindent In the case, $M\equiv 1$ the fractional systems have been studied in \cite{MR3400517,MR3503657}. Precisely, for $ f=g=1 $, in \cite {MR3503657}, authors have studied the following system with critical growth
\begin{align*}
(-\Delta)^{s}u
&=\lambda |u|^{q-2}u+ \frac{2\alpha}{\alpha+\beta}\left|u\right|^{\alpha-2}u|v|^\beta\;  \text{in } \Omega,\\
(-\Delta)^{s}v &=\mu |v|^{q-2}v+ \frac{2\beta}{\alpha+\beta}\left|u\right|^{\alpha}|v|^{\beta-2}v\;  \text{in } \Omega,\\
u&=v=0 \;\mbox{in } \mathbb R^N\setminus \Omega,
\end{align*}
 where $\alpha, \beta>1$ and $\alpha+\beta=2^*_s$, $\Omega\subset \mathbb R^N$ is open bounded domain with smooth boundary, $N>2s, 0<s<1$, $1<q<2$
and proved the multiplicity of solutions using the idea of Nehari manifold and harmonic extension for suitable choice of $\lambda, \mu>0$. The extension of these results for quasilinear problems with critical growth can be seen in \cite{MR3562945}. \\
\noi However, in case of Kirchhoff fractional systems, in \cite{MR3400517, BR}, authors have obtained the multiplicity of solutions for $p$-fractional Kirchhoff system in the subcritical case for any $p\geq 2$. To the best of our knowledge, there is no results for the multiplicity of fractional Kirchhoff systems with critical nonlinearity. \\
\noindent Define the set
\begin{equation*}
 \mc M_{_\Gamma}=\left\{(\lambda,\mu)\in \mathbb R^2_+:\left((\lambda\|f\|_\gamma)^\frac{2}{2-q}+(\mu\|g\|_\gamma)^\frac{2}{2-q}\right)^\frac{2-q}{2}<\Gamma\right\}.
\end{equation*}
In the case of critical nonlinearity, we prove the following theorem.
\begin{thm}\label{22mht1}
Assume $a>0$ and {$b>0$}. Then
 \begin{enumerate}[label=(\roman*)]
\item  there exists a $\Gamma_{0}>0$ such that problem $(P_{\lambda,\mu})$ has at least one positive solution for $(\lambda,\mu) \in \mc M_{_{\Gamma_{0}}}$ with negative energy.
\item {Assume in addition that $\{x\in\Omega\,|\, f(x)>0 \mbox{ and } g(x)>0\}$ has a non zero measure. Then, for $b\in (0, b_0)$ for some $b_0>0$,  there exists $0<\Gamma_{00}=\Gamma_{00}(b) <\Gamma_{0}$} such that for $(\lambda,\mu) \in \mc M_{_{\Gamma_{00}}}$ the problem $(P_{\lambda,\mu})$ has at least two positive solutions and among them one has positive energy.
\end{enumerate}
\end{thm}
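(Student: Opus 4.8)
The plan is to work with the $C^1$ energy functional associated to $(P_{\la,\mu})$,
\[
J_{\la,\mu}(u,v)=\frac a2\|(u,v)\|^2+\frac b4\bigl(\|u\|^4+\|v\|^4\bigr)-\frac1q\int_\Om\bigl(\la f|u|^q+\mu g|v|^q\bigr)dx-\frac{2}{2^*_s}\int_\Om|u|^\al|v|^\ba\,dx,
\]
where $\|u\|^2=\int_\Om|(-\De)^{s/2}u|^2dx$ and $\|(u,v)\|^2=\|u\|^2+\|v\|^2$ on the product space $W$ of two copies of the solution space, and to run the Nehari method on $\mc N_{\la,\mu}=\{(u,v)\neq(0,0):\ld J'_{\la,\mu}(u,v),(u,v)\rd=0\}$. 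For fixed $(u,v)\neq(0,0)$ I would analyse the fibering map $\phi_{u,v}(t)=J_{\la,\mu}(tu,tv)$ and split $\mc N_{\la,\mu}$ into $\mc N^+,\mc N^0,\mc N^-$ according to the sign of $\phi''_{u,v}(1)$. Reinserting the constraint $\ld J'_{\la,\mu}(u,v),(u,v)\rd=0$ into $J_{\la,\mu}$ gives
\[
J_{\la,\mu}|_{\mc N}=a\Bigl(\tfrac12-\tfrac1{2^*_s}\Bigr)\|(u,v)\|^2+b\Bigl(\tfrac14-\tfrac1{2^*_s}\Bigr)\bigl(\|u\|^4+\|v\|^4\bigr)-\Bigl(\tfrac1q-\tfrac1{2^*_s}\Bigr)\int_\Om\bigl(\la f|u|^q+\mu g|v|^q\bigr)dx,
\]
and since $1<q<2<4\le 2^*_s$ (the last inequality from $N\le4s$) all three structural coefficients have the expected sign, so that Hölder and the fractional Sobolev embedding make $J_{\la,\mu}$ coercive and bounded below on $\mc N_{\la,\mu}$.

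For the decomposition to be a genuine splitting I would first choose $\Gamma_0$ so that $\mc N^0=\{(0,0)\}$ for $(\la,\mu)\in\mc M_{\Gamma_0}$. Setting $A(u,v)=\int_\Om(\la f|u|^q+\mu g|v|^q)dx$, the equation $\phi'_{u,v}(t)=0$ reads $h_{u,v}(t)=A(u,v)$ with $h_{u,v}(t)=at^{2-q}\|(u,v)\|^2+bt^{4-q}(\|u\|^4+\|v\|^4)-2t^{2^*_s-q}\int_\Om|u|^\al|v|^\ba\,dx$, and a degenerate ($\mc N^0$) direction occurs exactly when $A$ equals the maximum of $h_{u,v}$. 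Dropping the nonnegative $b$-term and optimising the remaining two-power function yields a lower bound $\max_t h_{u,v}\ge C_0\|(u,v)\|^q$, while Hölder with exponents $\tfrac2{2-q},\tfrac2q$ and the embedding give $A(u,v)\le S^{-q/2}\bigl((\la\|f\|_\ga)^{2/(2-q)}+(\mu\|g\|_\ga)^{2/(2-q)}\bigr)^{(2-q)/2}\|(u,v)\|^q$; choosing $\Gamma_0$ so that the right-hand factor stays below $C_0 S^{q/2}$ forces $A<\max_t h_{u,v}$ whenever $A>0$ and excludes $\mc N^0$. On $\mc N^+$ one has $A>0$ and $\phi_{u,v}(1)<0$, whence $c^+:=\inf_{\mc N^+}J_{\la,\mu}<0$; applying Ekeland's principle to the now smooth constraint produces a Palais--Smale sequence for $J_{\la,\mu}$ at level $c^+<0$. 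Since the compactness threshold is strictly positive, $c^+$ lies below it and Proposition \ref{crcmp} upgrades weak to strong convergence, giving a minimiser that is a weak solution. Passing to $(|u|,|v|)$ (admissible because $\|\,|u|\,\|\le\|u\|$ and the coupling and weight terms are unchanged) together with the fractional strong maximum principle yields a \emph{positive} solution, while $f^+,g^+\not\equiv0$ is what excludes semi-trivial weak limits, so that both components are nontrivial.

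For the second solution I would minimise over $\mc N^-$, the key being that $c^-:=\inf_{\mc N^-}J_{\la,\mu}$ must be pushed below the critical level $c^*$ (of order $\tfrac sN(aS_{\al,\ba})^{N/2s}$, where $S_{\al,\ba}$ is the best constant of the coupled embedding) supplied by Proposition \ref{crcmp}, below which $(PS)_c$ holds. To this end I would build test pairs $(u_\e,v_\e)=(\tau_1 U_\e,\tau_2 U_\e)$ from the truncated Caffarelli--Silvestre extremals $U_\e$, with $(\tau_1,\tau_2)$ chosen to realise $S_{\al,\ba}$, and use the asymptotic estimates of \cite{MR2911424,MR3117361} to expand $\sup_{t\ge0}J_{\la,\mu}(tu_\e,tv_\e)$ in $\e$; the extra hypothesis that $\{f>0,\ g>0\}$ has positive measure lets me centre this concentrating pair where both weights are positive, which is what makes the sublinear correction lower the energy for both components. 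Shrinking $\Gamma_{00}\le\Gamma_0$ makes $c^->0$ (the sublinear part of the fibering maximum becoming negligible), and shrinking $b$ below some $b_0$ controls the quartic Kirchhoff contribution, so that for $\e$ small the supremum, hence $c^-$, stays strictly below $c^*$. Ekeland's principle on $\mc N^-$ then gives a Palais--Smale sequence at level $c^-<c^*$, Proposition \ref{crcmp} gives strong convergence to a critical point $(u^-,v^-)\in\mc N^-$ with $J_{\la,\mu}(u^-,v^-)=c^->0$, and positivity follows as before. Because $\mc M_{\Gamma_{00}}\subset\mc M_{\Gamma_0}$ the solution from (i) persists, and since $\mc N^+$ and $\mc N^-$ are disjoint and the two energies have opposite signs, the two solutions are distinct.

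The main obstacle is precisely the energy estimate $c^-<c^*$ in part (ii): because the extremals $U_\e$ are known only asymptotically and not explicitly, every term in $\sup_t J_{\la,\mu}(tu_\e,tv_\e)$ --- in particular the quartic Kirchhoff term $\tfrac b4(\|u_\e\|^4+\|v_\e\|^4)$ --- must be expanded to the correct order in $\e$. The borderline dimension $N=4s$, where $2^*_s=4$, is the most delicate, since there the Kirchhoff and the critical terms scale identically and the $b$-term drops out of $J_{\la,\mu}|_{\mc N}$; this competition is what dictates the thresholds $b_0$ and $\Gamma_{00}=\Gamma_{00}(b)$ and forces the smallness of $b$ in statement (ii).
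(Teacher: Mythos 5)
Your strategy for part (i) matches the paper's: the same Nehari decomposition, the same choice of $\Gamma_0$ via the fibering/H\"older comparison to empty $\mc N^0_{\la,\mu}$, Ekeland's principle at the negative level, and Proposition \ref{crcmp} (whose threshold is positive for $(\la,\mu)$ small) to pass to the limit. The paper minimizes over all of $\mc N_{\la,\mu}$ and then shows the minimizer lands in $\mc N^+_{\la,\mu}$, and its exclusion of semitrivial minimizers is a genuine comparison argument with a modified pair $(w_0,z_0^*)$ rather than a direct consequence of $f^+,g^+\not\equiv 0$; these are inessential differences.

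Part (ii), however, has a gap at the decisive step, the estimate $\Theta^-_{\la,\mu}<c_{\la,\mu}$. You propose to obtain the strict inequality from the sublinear correction $-\frac{\la}{q}\int_\Om f|tu_\e|^q-\frac{\mu}{q}\int_\Om g|tv_\e|^q$ after centering the concentrating pair where both weights are positive. That gain is of order $\la\,\e^{q(N-2s)/2}$, i.e.\ proportional to $\la,\mu$. But the Kirchhoff term contributes $\frac b4 t^4(\|u_\e\|^4+\|v_\e\|^4)$ at the fibering maximum, and since $\|u_\e\|^2$ tends to the fixed nonzero norm of the extremal (see \eqref{estimates}), this is an excess of size $O(b)$ that does \emph{not} vanish as $\e\to0$. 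The theorem fixes $b$ first and then requires the conclusion for \emph{every} $(\la,\mu)\in\mc M_{_{\Gamma_{00}}}$, in particular for $\la,\mu$ arbitrarily small; for such $(\la,\mu)$ your gain cannot dominate the $O(b)$ excess (nor the $\la,\mu$-dependent defect already subtracted in $c_{\la,\mu}$), so the strict inequality is not reached. The paper's Lemma \ref{II} avoids this by testing with $(w_0+r\,w_{\e,\eta,\al},\,z_0+r\,w_{\e,\eta,\ba})$, i.e.\ translating by the first \emph{positive} solution, and extracting via the elementary inequality \eqref{apbp} the interaction term of order $r^{2^*_s-1}\int_\Om w_\e(x,0)^{2^*_s-1}dx\geq C\e^{(N-2s)/2}$, a gain independent of $\la,\mu$ which beats the $O(\e^{N-2s})$ error and, after coupling $b$ with $\e$ (the paper takes $b=\e$, which is precisely where $b_0$ and the restriction $N\leq 4s$ enter), the Kirchhoff excess as well; only afterwards is $\Gamma_*$ shrunk to control the $\la,\mu$-part of $c_{\la,\mu}$. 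This translation-plus-interaction device, together with the $W_1/W_2$ path argument that places a point of $\mc N^-_{\la,\mu}$ below the threshold, is the content your sketch is missing.
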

\noindent Theorem \ref{22mht1} compliments the result of \cite{MR3400517}  for critical case and \cite{BR} for non-degenerate Kirchhoff case with critical nonlinearity in low dimensions. By overcoming the lack of compactness,  we have obtained two nontrivial positive solutions by avoiding the possibility  of semi trivial solutions. 
\begin{rem}
We remark that the restriction on $b$ in Theorem \ref{22mht1} $(ii)$ is required while showing the Palais-Smale  level  in a suitable Nehari subset lies within the compactness level (see Lemma \ref{II}). 
\end{rem}
\begin{rem}
We remark that the  multiplicity result of Theorem \ref{22mht1} $(ii)$ holds true for all $b>0$ when  $(N, s)$ pair takes the following values.
\[
\left(1, \frac14\right),\;\left(2, \frac12\right), \;\left(3, \frac34\right), \;\left(1, \frac13\right), \;\left(2, \frac23\right).
\]
Observe that, in these cases either $2^*_s=4$ or $2^*_s=6$ which  allows us to find an explicit  positive root of an algebraic polynomial of degree $2^*_s$ involved in the estimation of energy level in Nehari manifold (see Lemma \ref{II}). Since $2^*_s$ is not an integer, in general, it is not always possible to find a zero of an algebraic expression.
\end{rem}
\begin{rem} The restriction $2s<N\leq 4s$ (or $2^*_s\geq 4$) can be removed if we consider a general Kirchhoff model as 
\begin{equation*}
M(t)=a+bt^{\theta-1},\quad\mbox{with }a>0 \mbox{ and } b>0,\mbox{ for any }t\geq0,\mbox{ and } \theta\in[1,2^*_s/2)
\end{equation*}
which includes the standard Kirchhoff case (for $\theta=2$) as well. For the ease of presentation of the technique we have considered only the standard Kirchhoff case.
\end{rem}
\noi The paper is organized as follows. In section 2, preliminaries and variational settings are defined. In section 3, we introduce Nehari manifold and analysis of fibering maps. In section 4, we study the compactness of Palais-Smale sequences extracted from Nehari decompositions. In section 5, the existence of first solution is proved. Finally, in section 6 we have shown the existence of second solution and concluded the proof of Theorem \ref{22mht1}. A proof of an elementary inequality is added in the Appendix for the sake of completeness. 

\section{Variational Formulation}
\noi The fractional powers of Laplacian, $(-\Delta)^{s}$, in a bounded domain $\Omega$ with zero Dirichlet boundary data are defined through the spectral decomposition using the powers of the eigenvalues of the
Laplacian operator. Let $(\varphi_j,\rho_j)$ be the eigen-functions and
eigen-vectors of $(-\Delta)$ in $\Omega$ with zero Dirichlet boundary data. Then
$(\varphi_j,\rho_j^{s})$ are the eigen-functions and eigen-vectors of
$(-\Delta)^{s}$ with  Dirichlet boundary conditions. In fact, the
fractional Laplacian $(-\Delta)^{s}$ is well defined in the space of
functions
$$
H^{s}_0(\Omega)=\left\{u=\sum c_j
  \varphi_j\in L^2(\Omega)\  :\  \|u\|_{H^{s}_0(\Omega)}=
  \left(\sum c_j^2\rho_j^{s}\right)^{1/2}<\infty\right\}
$$
and as a consequence,
$$
(-\Delta)^{s}u=\sum
  c_j\rho_j^{s}\varphi_j\,.
$$
Note that
$\|u\|_{H^{s}_0(\Omega)}=\|(-\Delta)^{s/2}u\|_{L^2(\Omega)}$. The dual space $H^{-s}(\Omega)$ and the inverse operator $(-\Delta)^{-s}$ are  defined in the standard way. The Energy functional associated to  $(P_{\lambda,\mu})$ is given by
\begin{align*}
\mathcal{J}_{\lambda,\mu}(u,v)&=\frac{1}{2}\widehat M\left(\int_\Omega|(-\Delta)^{\frac{s}{2}}u|^2dx\right)+\frac{1}{2}\widehat M\left(\int_\Omega|(-\Delta)^{\frac{s}{2}}v|^2dx\right)\\&\quad-\frac{\lambda}{q}\int_\Omega f(x)|u|^qdx+\frac{\mu}{q}\int_\Omega g(x)|v|^qdx-\frac{2}{\alpha+\beta}\int_\Omega |u|^\alpha|v|^\beta dx,
\end{align*}
where $\widehat{M}(t)=\int_0^t M(s) ds $ is the primitive of $M$. Under $\textbf{(fg)}$, it is easy to see that the functional $\mathcal{J}_{\lambda,\mu}$ is well defined and continuously differentiable on $ H_0^{s}(\Omega)\times H_0^{s}(\Omega)$.
\begin{defi}
A function $(u,v)\in H_0^{s}(\Omega)\times H_0^{s}(\Omega)$ is called  a weak solution of $(P_{\lambda,\mu})$ if for all $(\varphi,\psi) \in H_0^{s}(\Omega)\times H_0^{s}(\Omega)$
\begin{align*}
&M\left(\int_\Omega|(-\Delta)^{\frac{s}{2}}u|^2dx\right)\displaystyle \int_\Omega (-\Delta)^\frac{s}{2} u (-\Delta)^\frac{s}{2}\varphi dx+\\
&M\left(\int_\Omega|(-\Delta)^{\frac{s}{2}}v|^2dx\right)\displaystyle \int_\Omega (-\Delta)^\frac{{s}}{2} v (-\Delta)^\frac{{s}}{2}\psi dx= \displaystyle\lambda\int_\Omega f(x)|u|^{q-2}u\varphi\,dx\\&+\displaystyle \mu\int_\Omega g(x)|v|^{q-2}v\psi\,dx+\frac{2\alpha}{\alpha+\beta}\int_\Omega \left|u\right|^{\alpha-2}u|v|^\beta\varphi dx\\&+\frac{2\beta}{\alpha+\beta}\int_\Omega \left|u\right|^{\alpha}|v|^{\beta-2}v\psi dx.
\label{wf}
\end{align*}
\end{defi}
\noindent Now we discuss about a harmonic extension  technique developed by Caffarelli and Silvestre \cite{MR2354493} to treat the non-local problems involving fractional Laplacian. In this technique, we study an extension problem corresponding to a nonlocal problem so that we can investigate the non-local problem via classical variational methods. To begin with, we first define the harmonic extension of $u\in H^s_0(\Omega)$ as follows
 \begin{defi} \label{HED}
 For $u\in H^s_0(\Omega)$, the harmonic extension $E_{s}(u):=w$ is the solution of the following problem
 \begin{equation*}
\left\{\begin{array}{rlll}
-\mathrm{div}(y^{1-2s}\nabla w) &=0 \quad \text{in}\quad  \Omega\times (0, \infty):=\mathcal C,\\
w&=0 \quad \text{on}\quad \partial \Omega\times (0, \infty):=\partial_L,\\
w&=u \quad \text{on}\quad \Omega \times\{0\}.
\end{array}
\right.
\end{equation*}
Moreover, the extension function is related with fractional Laplacian by
\[
(-\Delta)^s u(x)=-\kappa_s\displaystyle \lim_{y\rightarrow 0^+}y^{1-2s}\frac{\partial w}{\partial y}(x,y),
\]
{where $\kappa_s =2^{2s-1}\frac{\Gamma(s)}{\Gamma(1-s)}$.}
 \end{defi}
 \noi The solution space for the extension problem is the following Hilbert space
$$
E_{0}^1(\mathcal{C})=\left\{w\in
L^{2}(\mathcal{C})\,:\: w=0 \mbox{ on }
\partial_L,\;  \|w\|<\infty\right\}.
$$
The norm  $$\|w\|:=
\left(\kappa_s\int_{\mathcal{C}} y^{1-2s}|\nabla
w|^2 dx dy\right)^{1/2}
$$
is induced on $E_{0}^1(\mathcal{C})$
through the following inner product
\[
\langle w, z\rangle=\kappa_s \ds \int_{\mc C}y^{1-2s}\nabla w. \nabla z dx\,dy.
\]
 We observe that the extension operator is an isometry
between $H_0^{s}(\Omega)$ and
$E_{0}^1(\mathcal{C})$. That is, for all $u\in H_{0}^{s}(\Omega)$, we have
\begin{equation}\label{equivalnorm}
\|E_s(u)\|=\|u\|_{H^{s}_{0}(\Omega)}.
\end{equation}
This isometry in \eqref{equivalnorm} is the key to study the Kirchhoff type problems in the harmonic extension set up. Denote $E_s(u)=w$ and $E_s(v)=z$ in the sense of Definition \ref{HED}. Then, the problem $(P_{\lambda, \mu})$ is equivalent to the study of the following extension problem
  \begin{equation*}
  (S_{\lambda, \mu})\left\{\begin{array}{rll}
 -\mathrm{div} (y^{1-2s}\nabla w)&=0,\;-\mathrm{div} (y^{1-2s}\nabla z)=0, &\textrm{in}\; \mathcal{C},\\
 w=z&=0,  &\text{on}\;\partial_L,\\
  M(\|w\|^2)\frac{\partial w}{\partial \nu}&= \lambda f|w|^{q-2}w+\frac{2\alpha}{\alpha+\beta}\left|w\right|^{\alpha-2}w|z|^\beta, &\textrm{on}\;\Omega\times \{0\},\\
  M(\|z\|^2)\frac{\partial z}{\partial \nu}&= \mu g|z|^{q-2}z+\frac{2\beta}{\alpha+\beta}\left|w\right|^{\alpha}|z|^{\beta-2}z, &\textrm{on}\;\Omega\times \{0\},
\end{array}
\right.
\end{equation*}
where $\frac{\partial w}{\partial \nu}=-\kappa_s\displaystyle\lim_{y\rightarrow 0^+}y^{1-2s}\frac{\partial w}{\partial y}(x, y)$ and $\frac{\partial z}{\partial \nu}=-\kappa_s\displaystyle\lim_{y\rightarrow 0^+}y^{1-2s}\frac{\partial z}{\partial y}(x, y)$.
\noindent The natural space to look for the solution of $(S_{\lambda, \mu})$ is the product space $ \mc H(\mc C):=E^1_{0}(\mc C)\times E^1_{0}(\mc C)$ endowed with the following product norm
\[
\|(w, z)\|^2=\kappa_s\left(\int_{\mathcal{C}} y^{1-2s}|\nabla w|^2dxdy+\int_{\mathcal{C}} y^{1-2s}|\nabla z|^2dxdy\right).
\]

\noindent The variational functional $\mathcal I_{\lambda, \mu}:  \mc H(\mc C)\rightarrow \mathbb R$ associated to $(S_{\lambda, \mu})$ is defined as
\begin{align*}\label{fel}
\mathcal I_{\lambda, \mu}(w, z)&=\frac{1}{2}\widehat M(\|w\|^2)+\frac{1}{2}\widehat M(\|z\|^2) -\frac{\lambda}{q}\int_\Omega f(x)|w(x, 0)|^qdx\\&-\frac{\mu}{q}\int_\Omega g(x)|z(x, 0)|^qdx-\frac{2}{\alpha+\beta}\int_\Omega |w(x, 0)|^\alpha|z(x, 0)|^\beta dx.
\end{align*}
Without putting great efforts, it can be shown that $\mathcal I_{\lambda, \mu}$ is well defined and $C^1$. Now we give the definition of a weak solution of the extension problem $(S_{\lambda, \mu})$.
\begin{defi}
A function $(w,z)\in \mc H(\mc C)$ is called  a weak solution of  $(S_{\lambda, \mu})$ if for all $(\varphi,\psi) \in \mc H(\mc C)$
\begin{align*}
&M\left(\|w\|^2\right)\kappa_s \int_\mathcal C y^{1-2s}\nabla w.\nabla \varphi dzdy+M\left(\|z\|^2\right)\kappa_s \int_\mathcal C y^{1-2s}\nabla z.\nabla \psi dzdy-\\
& \displaystyle\lambda\int_\Omega f(x)|w(x, 0)|^{q-2}w(x, 0)\varphi(x, 0)\,dx-\displaystyle \mu\int_\Omega g(x)|z(x, 0)|^{q-2}z(x, 0)\psi(x,0)\,dx\\&\quad-\frac{2\alpha}{\alpha+\beta}\int_\Omega \left|w(x, 0)\right|^{\alpha-2}w(x, 0)|z(x, 0)|^\beta\varphi(x, 0) dx\\&\quad\frac{2\beta}{\alpha+\beta}\int_\Omega \left|w(x, 0)\right|^{\alpha}|z(x, 0)|^{\beta-2}z(x, 0)\psi(x, 0) dx=0.
\end{align*}
\end{defi}
\noi It is clear that the critical points of $\mathcal I_{\lambda, \mu}$ in $\mc H(\mc C)$ corresponds to the critical points of $\mc J_{\lambda, \mu}$ in $H^{{s}}_0(\Omega)\times H^{{s}}_0(\Omega)$. Thus if $(w, z)$ solves $ (S_{\lambda, \mu}),$ then $(u, v)$, where $u=\textrm{trace}\;(w)=w(x,0)$ , $v=\textrm{trace}\;(z)=z(x,0)$ is a solution of  $(P_{\lambda, \mu})$ and vice-verse.\\
\noindent Now, we state the following trace inequality which  will be  used in the subsequent Lemmas.
\begin{lem}\label{22traceemb}
Let $2\leq r\leq 2^*_s$, then there exists $C_r>0$ such that for all $w\in E^1_{0}(\mathcal C)$,
\begin{equation}\label{traceequation}
\displaystyle \int_{\mathcal C}y^{1-2s}|\nabla w|^2 dxdy\geq C_r\left(\int_{\Omega}|w(x, 0)|^r dx\right)^{\frac{2}{r}}.
\end{equation}
\noindent Moreover for $r=2^*_s$, the best constant in  \eqref{traceequation} will be denoted by $S(s, N)$, that is,
 \begin{equation}\label{soblc}
S(s, N):=\inf_{w\in
E^1_{0}(\mathcal {C})\backslash\{0\}}\frac{\displaystyle\int_{\mathcal
{C}}y^{1-2s}|\nabla w|^2dxdy}
{\left(\displaystyle\int_{\Omega}|w(x, 0)|^{2^*_s}dx\right)^{\frac{2}{2^*_s}}}.
\end{equation}
 and  it is indeed achieved in the case when $\Omega=\mathbb{R}^N$ and
$w=E_s(u)$, where
\begin{equation}\label{extrml}
u(x)=u_{\varepsilon}(x)= \frac{\varepsilon^{(N-2s)\slash
2}}{(|x|^2+\varepsilon^2)^{(N-2s)\slash
    2}}
\end{equation}
with $\varepsilon>0$ arbitrary.
\end{lem}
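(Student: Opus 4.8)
The plan is to establish the sharp endpoint $r=2^*_s$ first and then recover the whole range $2\le r\le 2^*_s$ by Hölder's inequality, using crucially that $\Om$ is bounded. For the endpoint I would exploit that every $w\in E^1_0(\mc C)$ vanishes on the lateral boundary $\pa_L=\pa\Om\times(0,\infty)$, so its extension by zero to the half-space $\R^N\times(0,\infty)$, call it $\tilde w$, belongs to the natural homogeneous weighted Dirichlet space on $\R^N\times(0,\infty)$ and satisfies
\[
\int_{\R^N\times(0,\infty)}y^{1-2s}|\na\tilde w|^2\,dxdy=\int_{\mc C}y^{1-2s}|\na w|^2\,dxdy,\qquad \tilde w(\cdot,0)=w(\cdot,0)\text{ on }\Om,\ \ \tilde w(\cdot,0)=0\text{ outside }\Om .
\]
Hence \eqref{traceequation} with $r=2^*_s$ on $\mc C$ follows at once from the weighted trace Sobolev inequality over the whole half-space. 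By the Caffarelli--Silvestre extension \cite{MR2354493} and the isometry \eqref{equivalnorm}, this half-space trace inequality is precisely the extension-formulation of the fractional Sobolev inequality $\|u\|_{L^{2^*_s}(\R^N)}^2\le C\,\|u\|_{H^s_0}^2$ (valid since $N>2s$), whose sharp constant coincides with the quantity $S(s,N)$ introduced in \eqref{soblc}; the zero-extension shows the same $S(s,N)$ is an admissible lower bound over the bounded domain as well.

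For the subcritical range $2\le r<2^*_s$, boundedness of $\Om$ gives the continuous inclusion $L^{2^*_s}(\Om)\hookrightarrow L^{r}(\Om)$, with $\|u\|_{L^r(\Om)}\le|\Om|^{1/r-1/2^*_s}\|u\|_{L^{2^*_s}(\Om)}$; combined with the endpoint inequality just proved, this yields \eqref{traceequation} for every $r\in[2,2^*_s]$, with $C_r$ depending only on $r$, $s$, $N$ and $|\Om|$. Note that the Hölder step is exactly where the finiteness of $|\Om|$ enters, so the subcritical cases do not follow from the half-space reduction alone.

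It remains to justify the attainment claim, namely that $S(s,N)$ is achieved---and only in the limiting case $\Om=\R^N$---by the harmonic extensions $w=E_s(u_\varepsilon)$ of the bubbles \eqref{extrml}. I would import this from the literature: the value of the sharp constant and the explicit form and asymptotics of the extremals are exactly the content of \cite{MR2911424,MR3117361}. This classification is the one genuinely nontrivial ingredient here; the mere existence of the constants $C_r$ is elementary once the zero-extension reduction is in place, whereas the fact that on a bounded $\Om$ the infimum in \eqref{soblc} is \emph{not} attained reflects the loss of compactness at the critical exponent that drives the concentration analysis carried out later in the paper.
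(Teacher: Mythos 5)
The paper does not actually prove this lemma: it is stated as a known trace/Sobolev inequality imported from the harmonic-extension literature (\cite{MR2354493,MR2911424,MR3117361}), so there is no internal argument to compare yours against. Your sketch is the standard derivation underlying those references and is correct: zero extension across $\pa_L$ preserves the weighted Dirichlet energy and reduces the critical case to the half-space trace inequality, H\"older on the bounded $\Om$ handles $2\le r<2^*_s$, and the identification of the sharp constant and of the extremals $E_s(u_\varepsilon)$ is rightly imported. The one step you pass over silently is that the half-space trace inequality for an \emph{arbitrary} $v$ with trace $u$ requires the energy-minimality of the Caffarelli--Silvestre extension, i.e.\ $\int y^{1-2s}|\na v|^2\ge\int y^{1-2s}|\na E_s(u)|^2$, not just the isometry \eqref{equivalnorm} for harmonic extensions themselves; this is standard but should be stated, since without it the infimum in \eqref{soblc} over all of $E^1_0(\mc C)$ is not yet identified with the fractional Sobolev constant.
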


\noi We conclude this section by introducing the following minimization problem. 
\begin{equation}\label{consys}
S(s, \alpha,\beta):=\inf_{(w,z)\in
\mc H(\mathcal {C})\backslash\{0\}}\frac{\displaystyle\int_{\mathcal
{C}}y^{1-2s}(|\nabla w|^2+|\nabla z|^2)dxdy}
{\Big(\displaystyle\int_{\Omega}|w(x, 0)|^{\alpha}|z(x, 0)|^{\beta}dx\Big)^{\frac{2}{2^*_s}}}.
\end{equation}
In light of the inequality $|w|^\alpha|z|^\beta\leq|w|^{\alpha+\beta}+|z|^{\alpha+\beta}$ and Lemma \ref{22traceemb}, the best Sobolev constant in \eqref{consys} is well defined. Using the ideas from  \cite{MR1776922}, authors in \cite{MR3503657} established the following  relationship between
$S(s,N)$ and $S(s, \alpha,\beta)$.

\begin{lem}\label{optsys}
For the constants $S(s,N)$ and $S(s, \alpha,\beta)$ introduced in  \eqref{soblc} and \eqref{consys} respectively, it holds
\begin{equation*}
S(s, \alpha,\beta)=
\left[\left(\frac{\alpha}{\beta}\right)^{\frac{\beta}{\alpha+\beta}}+\left(\frac{\beta}{\alpha}\right)^{\frac{\alpha}{\alpha+\beta}}
\right]S(s, N).
\end{equation*}
In particular, the constant $S(s,\alpha,\beta)$ is achieved for $\Omega=\mb R^N$.
\end{lem}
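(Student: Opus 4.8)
The plan is to prove the identity through two matching inequalities, reducing the system minimization \eqref{consys} to the scalar minimization \eqref{soblc} via the ansatz $z=t\,w$. For the upper bound I would restrict the infimum in \eqref{consys} to pairs of the special form $(w,t\,w)$ with $w\in E^1_0(\mc C)\setminus\{0\}$ and $t>0$. For such a pair the numerator equals $(1+t^2)\int_{\mc C}y^{1-2s}|\nabla w|^2\,dxdy$, while the denominator base becomes $t^\beta\int_\Omega|w(x,0)|^{\alpha+\beta}\,dx=t^\beta\int_\Omega|w(x,0)|^{2^*_s}\,dx$ since $\alpha+\beta=2^*_s$. Hence the Rayleigh quotient factors as
\[
\frac{1+t^2}{t^{2\beta/2^*_s}}\cdot
\frac{\ds\int_{\mc C}y^{1-2s}|\nabla w|^2\,dxdy}{\big(\ds\int_\Omega|w(x,0)|^{2^*_s}\,dx\big)^{2/2^*_s}}.
\]
Taking the infimum over $w$ turns the second factor into $S(s,N)$ by \eqref{soblc}, and a one--variable optimization of $g(t)=(1+t^2)t^{-2\beta/2^*_s}$ yields the critical point $t_0=\sqrt{\beta/\alpha}$ with minimal value $g(t_0)=\tfrac{\alpha+\beta}{\alpha}(\alpha/\beta)^{\beta/(\alpha+\beta)}$; a short algebraic manipulation rewrites this as the bracketed constant in the statement, giving $S(s,\alpha,\beta)\le\big[\,\cdots\,\big]S(s,N)$.

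For the reverse inequality---the step I expect to be the crux---I would take an arbitrary $(w,z)\in\mc H(\mc C)\setminus\{0\}$ and set $P=(\int_\Omega|w(x,0)|^{2^*_s}dx)^{1/2^*_s}$ and $Q=(\int_\Omega|z(x,0)|^{2^*_s}dx)^{1/2^*_s}$. Applying \eqref{soblc} separately to $w$ and to $z$ gives $\int_{\mc C}y^{1-2s}|\nabla w|^2dxdy\ge S(s,N)P^2$ and the analogous bound with $Q$. For the denominator I would use H\"older's inequality with the conjugate exponents $2^*_s/\alpha$ and $2^*_s/\beta$ (admissible since $\alpha/2^*_s+\beta/2^*_s=1$) to obtain $\int_\Omega|w(x,0)|^\alpha|z(x,0)|^\beta dx\le P^\alpha Q^\beta$. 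Combining, the quotient in \eqref{consys} is bounded below by $S(s,N)(P^2+Q^2)/(P^{2\alpha/2^*_s}Q^{2\beta/2^*_s})$; writing $\tau=Q/P$ and using $2\alpha/2^*_s+2\beta/2^*_s=2$ collapses this to $S(s,N)\,g(\tau)$ with exactly the same $g$ as above. Minimizing over $\tau>0$ therefore reproduces $g(t_0)S(s,N)$, yielding $S(s,\alpha,\beta)\ge\big[\,\cdots\,\big]S(s,N)$ and hence the claimed identity.

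Finally, to see that the constant is attained when $\Omega=\mb R^N$, I would note that there $S(s,N)$ is achieved by $w=E_s(u_\varepsilon)$ from Lemma \ref{22traceemb}; for the pair $(w,t_0w)$ both applications of \eqref{soblc} and the H\"older step become equalities---the latter because $|z(x,0)|^{2^*_s}$ is then a constant multiple of $|w(x,0)|^{2^*_s}$---so the chain of inequalities is saturated and $(w,t_0w)$ minimizes \eqref{consys}. The only genuinely delicate point is this equality analysis; everything else is a direct computation once the scalar reduction is set up.
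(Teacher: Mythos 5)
Your argument is correct, and it is essentially the standard one: the paper itself gives no proof of Lemma \ref{optsys}, deferring to \cite{MR3503657} and \cite{MR1776922}, where the identity is established by exactly your two-sided reduction --- testing with pairs $(w,tw)$ for the upper bound, and H\"older with exponents $2^*_s/\alpha$, $2^*_s/\beta$ plus the scalar inequality \eqref{soblc} for the lower bound, both collapsing to the one-variable function $g(t)=(1+t^2)t^{-2\beta/(\alpha+\beta)}$ minimized at $t_0=\sqrt{\beta/\alpha}$. Your computation of $g(t_0)=\tfrac{\alpha+\beta}{\alpha}(\alpha/\beta)^{\beta/(\alpha+\beta)}$ and its identification with the bracketed constant check out, as does the saturation analysis for $\Omega=\mathbb R^N$.
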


\section{Nehari manifold for $(S_{\lambda,\mu})$}
\noindent In this section we study the nature of Nehari manifold associated with $(S_{\lambda,\mu})$.  In the case $\alpha+\beta \ge 4$, the functional $\mathcal{I}_{\lambda,\mu}$ is not bounded below on $\mc H(\mc C)$. We will show that it is bounded on some suitable subset of $\mc H(\mc C)$ and on minimizing $\mathcal{I}_{\lambda,\mu}$ on these subsets, we get the solutions of $(S_{\lambda,\mu})$.
\noindent We define the Nehari set $\mathcal N_{\lambda,\mu}$  as
\begin{equation*}
\mathcal  N_{\lambda,\mu}=\{(w,z)\in \mc H(\mc C)\setminus\{0\}: \left\langle \mathcal I_{\lambda,\mu}^\prime(w,z), (w,z)\right\rangle_* =0\},\end{equation*}
where $\langle,\;\rangle_*$ is the duality between $\mc H(\mc C)$ and its dual space. Thus $(w,z)\in \mathcal N_{\lambda,\mu}$ if and only if
\begin{equation}\label{22nlam}
\begin{aligned}
&M(\|w\|^2)\|w\|^2+M(\|z\|^2)\|z\|^2=\lambda \int_{\Omega} f(x)|w(x, 0)|^q dx\\&+\mu \int_{\Omega} g(x)|z(x, 0)|^q dx+
2\int_{\Omega}|w(x, 0)|^{\alpha}|z(x, 0)|^\beta dx.
\end{aligned}
\end{equation}
Now for fix $(w,z) \in \mc H(\mc C)$, we define $\phi_{(w,z)}: \mathbb R^+\rightarrow \mathbb R$, much known as fiber maps,  as $\phi_{(w,z)}(t)=\mathcal{I}_{\lambda,\mu}(tw,tz)$.
Thus
\begin{align*}
\phi_{(w,z)}(t)&=\frac{1}{2}\widehat{M}(t^2\|w\|^2)+\frac{1}{2}\widehat{M}(t^2\|z\|^2)-\frac{t^q}{q}\lambda\int_{\Omega} f(x)|w(x, 0)|^qdx\\ 
&-\frac{t^q}{q}\mu\int_{\Omega} g(x)|z(x, 0)|^qdx-\frac{2t^{2^*_s}}{2^*_s}\int_{\Omega} |w(x, 0)|^\alpha|z(x, 0)|^\beta dx.
\end{align*}
Now, for fixed $(w, z)\in \mc H(\mc C)$, taking derivative with respect to the variable $t$ and putting $t=1$, we get
\begin{equation}\label{phid1}
\begin{aligned}
\phi_{(w,z)}^{\prime}(1)&=M(\|w\|^2)\|w\|^2+M(\|z\|^2)\|z\|^2
-\lambda \int_{\Omega} f(x)|w(x, 0)|^qdx\\
&\quad-\mu \int_{\Omega} g(x)|z(x, 0)|^qdx-2\int_{\Omega}|w(x, 0)|^\alpha|z(x, 0)|^\beta dx,
\end{aligned}
\end{equation}
\begin{equation}\label{phid2}
\begin{aligned}
\phi_{(w,z)}^{\prime\prime}(1)&= a(\|w\|^2+\|z\|^2)+ 3b(\|w\|^{4}+\|z\|^{4})
-(q-1)\lambda\int_{\Omega} f(x)|w(x, 0)|^qdx\\&
- (q-1)\mu\int_{\Omega} g(x)|z(x, 0)|^qdx -2(2^*_s-1)\int_{\Omega} |w(x, 0)|^\alpha|z(x, 0)|^\beta dx.
\end{aligned}
\end{equation}
From equation \eqref{phid1}, $(w,z)\in \mathcal N_{\lambda,\mu}$ if and only if
$\phi_{(w,z)}^{\prime}(1)=0$. Thus it is natural to split
$\mathcal {N}_{\lambda,\mu}$ into three parts corresponding to local minima,
local maxima and points of inflection. For this, we set
\begin{align*}
\mathcal N_{\lambda,\mu}^{\pm} &:= \left\{(w,z)\in \mathcal N_{\lambda,\mu}: \phi_{(w,z)}^{\prime\prime}(1) \gtrless0\right\},\\
\mathcal N_{\lambda,\mu}^{0}&:= \left\{(w,z)\in \mathcal N_{\lambda,\mu}: \phi_{(w,z)}^{\prime\prime}(1) = 0\right\}.
\end{align*}
\noi The following Lemma shows that minimizers for $\mc I_{\la, \mu}$ on $\mc N_{\la, \mu}$ are critical points for $\mc I_{\la, \mu}$.
\begin{lem}\label{lmcp}
If $(w,z)$ is a minimizer of $\mathcal{I}_{\lambda,\mu}$ on $\mathcal{N}_{\lambda,\mu}$ such that $(w,z) \notin \mathcal{N}_{\lambda,\mu}^{0}.$ Then $(w, z)$ is a critical point for $\mathcal{I}_{\lambda,\mu}.$
\end{lem}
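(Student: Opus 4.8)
The plan is to realize $\mathcal N_{\lambda,\mu}$ as the zero set of a single $C^1$ constraint functional and apply the Lagrange multiplier theorem, with the hypothesis $(w,z)\notin\mathcal N_{\lambda,\mu}^{0}$ entering precisely to force the multiplier to vanish. To this end I would introduce the functional
\[
G_{\lambda,\mu}(w,z):=\left\langle \mathcal I_{\lambda,\mu}^{\prime}(w,z),(w,z)\right\rangle_*,
\]
which is $C^1$ on $\mathcal H(\mathcal C)$ because $\mathcal I_{\lambda,\mu}$ is $C^1$. By \eqref{22nlam} the Nehari set is exactly $\mathcal N_{\lambda,\mu}=\{(w,z)\in\mathcal H(\mathcal C)\setminus\{0\}:G_{\lambda,\mu}(w,z)=0\}$, and comparing with \eqref{phid1} one sees $G_{\lambda,\mu}(w,z)=\phi_{(w,z)}^{\prime}(1)$.

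First I would record the identity linking $G_{\lambda,\mu}$ to the second derivative of the fibering map. For fixed $(w,z)$, the homogeneity of the pairing gives $G_{\lambda,\mu}(tw,tz)=t\,\phi_{(w,z)}^{\prime}(t)$; differentiating at $t=1$ and using the chain rule yields
\[
\left\langle G_{\lambda,\mu}^{\prime}(w,z),(w,z)\right\rangle_*=\phi_{(w,z)}^{\prime}(1)+\phi_{(w,z)}^{\prime\prime}(1).
\]
Since $(w,z)\in\mathcal N_{\lambda,\mu}$ we have $\phi_{(w,z)}^{\prime}(1)=0$, so that $\left\langle G_{\lambda,\mu}^{\prime}(w,z),(w,z)\right\rangle_*=\phi_{(w,z)}^{\prime\prime}(1)$, with the right-hand side computed from \eqref{phid2}. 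Because $(w,z)\notin\mathcal N_{\lambda,\mu}^{0}$, this quantity is nonzero; in particular $G_{\lambda,\mu}^{\prime}(w,z)\neq 0$, so $(w,z)$ is a regular point of the constraint and $\mathcal N_{\lambda,\mu}$ is a genuine $C^1$ manifold near it.

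Next, since $(w,z)$ minimizes $\mathcal I_{\lambda,\mu}$ over the $C^1$ manifold $\{G_{\lambda,\mu}=0\}$ at a regular point, the Lagrange multiplier theorem furnishes $\theta\in\mathbb R$ with $\mathcal I_{\lambda,\mu}^{\prime}(w,z)=\theta\,G_{\lambda,\mu}^{\prime}(w,z)$. Testing this identity against $(w,z)$ and using $\left\langle \mathcal I_{\lambda,\mu}^{\prime}(w,z),(w,z)\right\rangle_*=\phi_{(w,z)}^{\prime}(1)=0$ together with the displayed identity gives $0=\theta\,\phi_{(w,z)}^{\prime\prime}(1)$. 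As $\phi_{(w,z)}^{\prime\prime}(1)\neq 0$, I conclude $\theta=0$, hence $\mathcal I_{\lambda,\mu}^{\prime}(w,z)=0$, i.e. $(w,z)$ is a free critical point of $\mathcal I_{\lambda,\mu}$.

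The only genuinely delicate point is the applicability of the Lagrange multiplier theorem, and this is exactly where the hypothesis $(w,z)\notin\mathcal N_{\lambda,\mu}^{0}$ is indispensable: it guarantees $\left\langle G_{\lambda,\mu}^{\prime}(w,z),(w,z)\right\rangle_*\neq 0$, hence $G_{\lambda,\mu}^{\prime}(w,z)\neq 0$, which legitimizes the constrained minimization near $(w,z)$. The remaining work is routine bookkeeping: verifying that $G_{\lambda,\mu}$ is $C^1$ and carrying the derivative computations of the fibering map already recorded in \eqref{phid1} and \eqref{phid2}.
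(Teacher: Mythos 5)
Your proposal is correct and follows essentially the same route as the paper: both realize $\mathcal N_{\lambda,\mu}$ as the zero set of $G_{\lambda,\mu}(w,z)=\langle \mathcal I_{\lambda,\mu}'(w,z),(w,z)\rangle_*$, invoke the Lagrange multiplier rule at the non-degenerate point, and test against $(w,z)$ to obtain $0=\theta\,\phi_{(w,z)}''(1)$, forcing $\theta=0$. Your explicit verification that $\langle G_{\lambda,\mu}'(w,z),(w,z)\rangle_*=\phi_{(w,z)}'(1)+\phi_{(w,z)}''(1)=\phi_{(w,z)}''(1)$ on the Nehari set is a slightly more careful justification of the non-degeneracy step that the paper states without computation.
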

\begin{proof}
Let $(w, z)$ be a local minimizer for $\mc I_{\lambda, \mu}$ in any of the
subsets of $\mc N_{\la, \mu}$. Then, in any case $(w, z)$ is a
minimizer for $\mc I_{\lambda, \mu}$ under the constraint $\mc F_{\la, \mu}(w, z):=\left\langle
\mc I_{\lambda, \mu}^{\prime}(w, z),(w, z)\right \rangle_* =0$. {Since $(w,z)\not\in\mathcal{N}_{\lambda,\mu}^{0}$, the constraint is non degenerate in $(w,z)$, then by the Lagrange
multipliers rule}, there exists $\eta \in \mb R$ such that $ \mc I_{\lambda, \mu}^{\prime}(w, z)= \eta \mc F_{\la, \mu}^{\prime}(w, z)$. Thus $\ld
\mc I_{\lambda, \mu}^{\prime}(w, z), (w, z)\rd= \eta \;\left\langle \mc F_{\lambda, \mu}^{\prime}(w, z), (w, z)\right\rangle_* = \eta
\phi_{(w, z)}^{\prime\prime}(1)$=0, but $(w, z)\notin \mc N_{\la, \mu}^{0}$ and so
$\phi_{(w,z)}^{\prime\prime}(1) \ne 0$. Hence $\eta=0$ which completes the
proof of the Lemma.
\end{proof}
\noindent Denote

\[\Gamma_1:=\left(\frac{a(2^*_s-2)(\kappa_s S(s, N))^\frac{q}{2}}{2^*_s-q}\right)
    \left(\frac{a(2-q)(\kappa_s S(s, N))^\frac{2^*_s}{2}}{2(2^*_s-q)}\right)^{\frac{2-q}{2^*_s-2}}.
\]The next Lemma helps us to show that $\mathcal{N}_{\lambda,\mu}$ is a manifold for suitable choice of $(\lambda,\mu)$. 
\begin{lem}\label{l22.2}
Let $(\lambda,\mu)\in \mc M_{_{\Gamma_1}}$ then $\mathcal N_{\lambda,\mu}^0=\emptyset$.
\end{lem}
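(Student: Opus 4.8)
The plan is to argue by contradiction. Suppose $(w,z)\in\mathcal N_{\lambda,\mu}^0$, so that $\phi_{(w,z)}'(1)=0$ and $\phi_{(w,z)}''(1)=0$ hold simultaneously. To keep the bookkeeping light I would abbreviate the four quantities occurring in \eqref{phid1}--\eqref{phid2} by
\begin{gather*}
T=\|w\|^2+\|z\|^2,\qquad U=\|w\|^4+\|z\|^4,\\
Q=\lambda\!\int_\Omega f|w(x,0)|^q\,dx+\mu\!\int_\Omega g|z(x,0)|^q\,dx,\qquad R=2\!\int_\Omega|w(x,0)|^\alpha|z(x,0)|^\beta\,dx.
\end{gather*}
Since $M(t)=a+bt$, the two vanishing conditions read $aT+bU=Q+R$ and $aT+3bU=(q-1)Q+(2^*_s-1)R$, a linear system in the unknowns $(Q,R)$.

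First I would eliminate $R$, and then $Q$, from this system to obtain the two exact identities
\[
(2^*_s-q)\,Q=a(2^*_s-2)\,T+b(2^*_s-4)\,U,\qquad (2^*_s-q)\,R=a(2-q)\,T+b(4-q)\,U.
\]
The hypothesis $2s<N\le 4s$ is exactly what yields $2^*_s\ge 4$, so every coefficient on the right is nonnegative; since $(w,z)\ne 0$ forces $T>0$, discarding the $U$-terms gives the lower bounds $Q\ge \frac{a(2^*_s-2)}{2^*_s-q}T$ and $R\ge \frac{a(2-q)}{2^*_s-q}T$.

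Next I would produce matching upper bounds through the trace inequality of Lemma \ref{22traceemb}. For $Q$, Hölder with exponents $\gamma=2^*_s/(2^*_s-q)$ and $2^*_s/q$ followed by \eqref{soblc} gives $\int_\Omega f|w(x,0)|^q\,dx\le \|f\|_\gamma(\kappa_s S(s,N))^{-q/2}\|w\|^q$, and a discrete Hölder inequality applied to the pair $(\|w\|^q,\|z\|^q)$ produces
\[
Q\le (\kappa_s S(s,N))^{-q/2}\,\Theta(\lambda,\mu)\,T^{q/2},\qquad \Theta(\lambda,\mu):=\Big((\lambda\|f\|_\gamma)^{\frac{2}{2-q}}+(\mu\|g\|_\gamma)^{\frac{2}{2-q}}\Big)^{\frac{2-q}{2}},
\]
where $\Theta(\lambda,\mu)$ is precisely the quantity constrained by $(\lambda,\mu)\in\mathcal M_{\Gamma_1}$. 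For $R$, using $|w|^\alpha|z|^\beta\le |w|^{2^*_s}+|z|^{2^*_s}$, the critical case of \eqref{soblc}, and the superadditivity $x^{2^*_s/2}+y^{2^*_s/2}\le (x+y)^{2^*_s/2}$, I would obtain $R\le 2(\kappa_s S(s,N))^{-2^*_s/2}\,T^{2^*_s/2}$. Combining each upper bound with its lower bound and cancelling a factor $T>0$ yields a forced upper bound $T^{(2-q)/2}\le \frac{(2^*_s-q)\Theta(\lambda,\mu)}{a(2^*_s-2)(\kappa_s S(s,N))^{q/2}}$ and a forced lower bound $T^{(2^*_s-2)/2}\ge \frac{a(2-q)(\kappa_s S(s,N))^{2^*_s/2}}{2(2^*_s-q)}$. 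Solving both for $T$, these are compatible only if $\Theta(\lambda,\mu)\ge\Gamma_1$; since $(\lambda,\mu)\in\mathcal M_{\Gamma_1}$ means $\Theta(\lambda,\mu)<\Gamma_1$, we reach a contradiction, whence $\mathcal N_{\lambda,\mu}^0=\emptyset$.

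The main point requiring care is the bookkeeping of constants so that the threshold comes out to be exactly $\Gamma_1$: one must track the factor $\kappa_s$ hidden in the norm $\|\cdot\|$ against the normalization of $S(s,N)$ in \eqref{soblc}, and one should route the estimate for $R$ through the single-function critical trace inequality rather than through $S(s,\alpha,\beta)$, so that $S(s,N)$ (and not the constant of Lemma \ref{optsys}) appears. The only structural input beyond these estimates is the inequality $2^*_s\ge 4$, which guarantees nonnegativity of the $U$-coefficient in the identity for $Q$ and hence justifies discarding $U$ in the lower bound for $Q$.
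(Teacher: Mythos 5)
Your proof is correct and follows essentially the same route as the paper: the two identities you derive are exactly the paper's \eqref{2.12} and \eqref{2.11}, and the contradiction comes from the same pair of bounds (the H\"older/trace upper bound on the $q$-term versus the lower bound \eqref{2.13} on $\|w\|^2+\|z\|^2$), with the same constant $\Gamma_1$. The only cosmetic difference is that your identity $(2^*_s-q)Q=a(2^*_s-2)T+b(2^*_s-4)U>0$ makes the paper's separate Case 1 (where the $q$-term vanishes) unnecessary.
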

\begin{proof}
We prove this lemma by contradiction. Assume on the contrary that $(w,z)\in \mc N_{\lambda,\mu}^0$. Then we have two cases\\
\textbf{Case 1:} $(w,z)\in \mc N_{\lambda,\mu}$ and $\lambda\int_{\Omega} f(x)|w(x, 0)|^qdx+\mu \int_{\Omega} g(x)|z(x, 0)|^qdx=0$.\\
\noindent  From \eqref{phid1} and \eqref{phid2} we get,
\begin{align*}
a(\|w\|^2+\|z\|^2) &+ 3b(\|w\|^{4}+\|z\|^{4})-2(2^*_s-1)\int_{\Omega } |w(x, 0)|^\alpha|z(x, 0)|^\beta dx\\
&=(2-2^*_s)a(\|w\|^2+\|z\|^2)+ (4-2^*_s)b(\|w\|^{4}+\|z\|^{4})<0
\end{align*}
which is a contradiction.\\
\textbf{Case 2:} $(w,z)\in \mc N_{\lambda,\mu}$ and $\lambda\ds \int_{\Omega}  f(x)|w(x, 0)|^qdx+\mu \int_{\Omega }  g(x)|z(x, 0)|^qdx\neq0$.\\
\noindent  Again from \eqref{phid1} and \eqref{phid2}, we have
\begin{equation}\label{2.11}
\begin{aligned}
    (2-q)a (\|w\|^{2}+\|z\|^{2}) &+ (4-q)b(\|w\|^{4}+\|z\|^{4}) \\&= 2(2^*_s-q)\int_{\Omega }  |w(x, 0)|^\alpha|z(x, 0)|^\beta dx
    \end{aligned}
    \end{equation}
    and 
    \begin{equation}\label{2.12}
    \begin{aligned}
    (2^*_s-2)& a (\|w\|^{2}+\|z\|^{2})+ (2^*_s-4) b(\|w\|^{4}+\|z\|^{4})\\
    &= (2^*_s-q) \left(\lambda \int_{\Omega }  f(x)|w(x, 0)|^q dx+\mu\int_{\Omega }  g(x)|z(x, 0)|^q dx\right).
  \end{aligned}
  \end{equation}

 \noindent Now define $\mc T_{\lambda,\mu}: \mc N_{\lambda,\mu} \rightarrow \mathbb R$ as
  \begin{align*}
    \mc T_{\lambda,\mu}(w,z) &=\frac{(2^*_s-2) a (\|w\|^{2}+\|z\|^{2}) + (2^*_s-4) b(\|w\|^{4}+\|z\|^{4})}{(2^*_s-q)}\\&- \lambda\int_{\Omega }  f(x)|w(x, 0)|^{q} dx-\mu\int_{\Omega }  g(x)|z(x, 0)|^{q} dx
  \end{align*}
  then from \eqref{2.12}, $\mc T_{\lambda,\mu}(w,z) = 0$ for all  $(w,z)\; \in \mathcal{N}_{\lambda,\mu}^{0}.$  Also,
  \begin{align*}
  &\mc T_{\lambda,\mu}(w,z)\geq \left(\frac{2^*_s-2}{2^*_s-q}\right)a(\|w\|^2+\|z\|^2 )\\
  &\quad - \lambda \int_{\Omega}  f(x)|w(x, 0)|^q dx-\mu\int_{\Omega }  g(x)|z(x, 0)|^q dx\\
  & \geq  \left(\frac{2^*_s-2}{2^*_s-q}\right)a(\|w\|^2+\|z\|^2 ) \\&\quad
  - (\kappa_s S(s, N))^\frac{-q}{2}\left((\lambda\|f\|_\gamma)^\frac{2}{2-q}+(\mu\|g\|_\gamma)^\frac{2}{2-q}\right)^\frac{2-q}{2}
  (\|w\|^2+\|z\|^2)^\frac{q}{2},\\
  & \geq  (\|w\|^2+\|z\|^2)^\frac{q}{2}\left(\left(\frac{2^*_s-2}{2^*_s-q}\right)a(\|w\|^2+\|z\|^2)^\frac{2-q}{2}\right.\\
  &\quad\left. -(\kappa_s S(s, N)^\frac{-q}{2}\left((\lambda\|f\|_\gamma)^\frac{2}{2-q}+(\mu\|g\|_\gamma)^\frac{2}{2-q}\right)^\frac{2-q}{2}\right).\\
  \end{align*}
  Now from \eqref{2.11}, we get
  \begin{equation}\label{2.13}
  \|w\|^2+\|z\|^2 \geq \left(\frac{a(2-q)(\kappa_s S(s, N))^\frac{2^*_s}{2}}{2(2^*_s-q)}\right)^{\frac{2}{2^*_s-2}}.
  \end{equation}
  Using (\ref{2.13}), we have   
  \begin{align*}
    \mc T_{\lambda,\mu}(w,z) &\geq (\|w\|^2+\|z\|^2)^\frac{q}{2}\left(\left(\frac{a(2^*_s-2)}{2^*_s-q}\right)
    \left(\frac{a(2-q)(\kappa_s S(s, N))^\frac{2^*_s}{2}}{2(2^*_s-q)}\right)^{\frac{2-q}{2^*_s-2}} \right.\\& \quad
    \left.-(\kappa_s S(s, N))^\frac{-q}{2}\left((\lambda\|f\|_\gamma)^\frac{2}{2-q}+(\mu\|g\|_\gamma)^\frac{2}{2-q}\right)^\frac{2-q}{2}\right).
  \end{align*}
  This implies that for $(\lambda,\mu)\in \mc M_{_{\Gamma_1}}, \mc T_{\lambda,\mu}(w,z)>0,$ for all $(w,z) \in \mathcal{N}_{\lambda,\mu}^{0},$
  which is a contradiction.
\end{proof}
\noindent Let us use the following notations. $\Theta_{\lambda,\mu} := \inf\{\mathcal{I}_{\lambda,\mu}(w,z)| (w,z) \in \mathcal{N}_{\lambda,\mu}\}$ and $\Theta_{\lambda,\mu}^\pm := \inf\{\mathcal{I}_{\lambda,\mu}(w,z)| (w,z) \in \mathcal{N}_{\lambda,\mu}^\pm\}$. The next Lemma justifies the choice of studying a minimization problem on $\mc N_{\lambda, \mu}$.
\begin{lem}\label{le44}
$\mathcal{I}_{\lambda,\mu}$ is coercive and bounded below on  $\mathcal{N}_{\lambda,\mu}$.
\end{lem}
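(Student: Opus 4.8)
The plan is to use the Nehari constraint \eqref{22nlam} to express the critical term in terms of the remaining quantities; once that troublesome term is removed, coercivity and boundedness from below follow from the elementary fact that the surviving leading terms have order two (and four) while the perturbation has order $q<2$.

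First I would take $(w,z)\in\mc N_{\lambda,\mu}$ and use \eqref{22nlam} together with $M(t)=a+bt$ to write
\[
2\int_\Omega|w(x,0)|^\alpha|z(x,0)|^\beta\,dx = a(\|w\|^2+\|z\|^2)+b(\|w\|^4+\|z\|^4)-\lambda\int_\Omega f|w(x,0)|^q\,dx-\mu\int_\Omega g|z(x,0)|^q\,dx.
\]
Substituting this into $\mc I_{\lambda,\mu}$, recalling $\widehat M(t)=at+\tfrac{b}{2}t^2$ and $\alpha+\beta=2^*_s$, the critical integral cancels and one is left with
\[
\mc I_{\lambda,\mu}(w,z)=a\Big(\tfrac12-\tfrac{1}{2^*_s}\Big)(\|w\|^2+\|z\|^2)+b\Big(\tfrac14-\tfrac{1}{2^*_s}\Big)(\|w\|^4+\|z\|^4)-\Big(\tfrac1q-\tfrac{1}{2^*_s}\Big)\Big(\lambda\int_\Omega f|w(x,0)|^q\,dx+\mu\int_\Omega g|z(x,0)|^q\,dx\Big).
\]
Next I would check the signs of the coefficients. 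Since $2<2^*_s$ the quadratic coefficient is strictly positive, and since the hypothesis $2s<N\le 4s$ is exactly $2^*_s\ge 4$, the quartic coefficient $\tfrac14-\tfrac{1}{2^*_s}$ is nonnegative, so the quartic term may simply be discarded from below. The coefficient $\tfrac1q-\tfrac{1}{2^*_s}$ multiplying the subcritical term is positive because $q<2<2^*_s$; hence it remains only to control that $q$-th order term from above.

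To this end I would estimate, componentwise, using H\"older's inequality with exponents $\gamma=2^*_s/(2^*_s-q)$ and $2^*_s/q$ together with the trace embedding \eqref{traceequation}--\eqref{soblc} with $r=2^*_s$,
\[
\lambda\int_\Omega f|w(x,0)|^q\,dx\le \lambda\|f\|_\gamma\|w(\cdot,0)\|_{2^*_s}^q\le \lambda\|f\|_\gamma(\kappa_s S(s,N))^{-q/2}\|w\|^q,
\]
and analogously for the $z$--term; a discrete H\"older inequality with exponents $2/(2-q)$ and $2/q$ then yields
\[
\lambda\int_\Omega f|w(x,0)|^q\,dx+\mu\int_\Omega g|z(x,0)|^q\,dx\le (\kappa_s S(s,N))^{-q/2}\Big((\lambda\|f\|_\gamma)^{\frac{2}{2-q}}+(\mu\|g\|_\gamma)^{\frac{2}{2-q}}\Big)^{\frac{2-q}{2}}(\|w\|^2+\|z\|^2)^{q/2}.
\]
Combining the three displays produces a lower bound of the form $\mc I_{\lambda,\mu}(w,z)\ge A\rho^2-B\rho^q$, where $\rho=(\|w\|^2+\|z\|^2)^{1/2}=\|(w,z)\|$ and $A,B>0$.

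Finally, since $0<q<2$, the scalar map $\rho\mapsto A\rho^2-B\rho^q$ tends to $+\infty$ as $\rho\to\infty$ and is bounded below on $[0,\infty)$, giving at once the coercivity and the boundedness from below of $\mc I_{\lambda,\mu}$ on $\mc N_{\lambda,\mu}$. I do not expect a genuine obstacle here: the essential point is that the Nehari constraint is precisely what converts the critical term into lower-order quantities, and that the dimensional restriction $2^*_s\ge4$ is what keeps the quartic coefficient nonnegative (so that the $b$-term helps rather than spoils the sign); the remainder is the standard $A\rho^2-B\rho^q$ argument.
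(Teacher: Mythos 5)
Your proposal is correct and follows essentially the same route as the paper: substitute the Nehari constraint to eliminate the critical term, note that $2^*_s\geq 4$ makes the quartic coefficient nonnegative, bound the $q$-order term via H\"older and the trace inequality by a constant times $(\|w\|^2+\|z\|^2)^{q/2}$, and conclude from the elementary behaviour of $A\rho^2-B\rho^q$ with $q<2$. No gaps.
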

\begin{proof}
For $(w,z) \in \mathcal{N}_{\lambda,u},$  using H\"{o}lder's inequality, we have
\begin{align*}
&\mathcal{I}_{\lambda, \mu}(w, z) = \left(\frac{1}{2}-\frac{1}{2^*_s}\right)a(\|w\|^{2}+\|z\|^{2})+ \left(\frac{1}{4}-\frac{1}{2^*_s}\right)b(\|w\|^{4}+\|z\|^{4})\\
&-\left(\frac{1}{q}-\frac{1}{2^*_s}\right)\left(\lambda\int_{\Omega }  f(x)|w(x, 0)|^{q}dx+\mu \int_{\Omega }  g(x)|z(x, 0)|^{q}dx\right),\\
 &\geq \left(\frac{1}{2}-\frac{1}{2^*_s}\right)a(\|w\|^{2}+\|z\|^{2})+ \left(\frac{1}{4}-\frac{1}{2^*_s}\right)b(\|w\|^{4}+\|z\|^{4})\\
 &-\left(\frac{1}{q}-\frac{1}{2^*_s}\right)
 (\kappa_s S(s, N))^\frac{-q}{2}\left((\lambda\|f\|_\gamma)^\frac{2}{2-q}+(\mu\|g\|_\gamma)^\frac{2}{2-q}\right)^\frac{2-q}{2}
  (\|w\|^2+\|z\|^2)^\frac{q}{2}.
\end{align*}
Since  {$2^*_s\geq 4$}, $\mathcal{I}_{\lambda, \mu}$ is coercive and bounded below in $\mathcal{N}_{\lambda,\mu}$.
\end{proof}
\begin{lem}\label{3a}
Let $(\lambda,\mu)\in\mc M_{_{\Gamma_1}}$. Then ${\Theta_{\lambda,\mu}}\leq {\Theta_{\lambda,\mu}^+} <0$.
\end{lem}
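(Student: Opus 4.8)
\emph{Proof idea.} The inequality $\Theta_{\lambda,\mu}\le\Theta_{\lambda,\mu}^{+}$ is free of content: since $\mathcal N_{\lambda,\mu}^{+}\subseteq\mathcal N_{\lambda,\mu}$, the infimum of $\mathcal I_{\lambda,\mu}$ over the larger set is no larger than that over the smaller one. The whole point is therefore to prove $\Theta_{\lambda,\mu}^{+}<0$, and the plan is to separate this into two claims: (a) every point of $\mathcal N_{\lambda,\mu}^{+}$ already has strictly negative energy, and (b) $\mathcal N_{\lambda,\mu}^{+}$ is non-empty. Once both are in hand, picking any $(w_{0},z_{0})\in\mathcal N_{\lambda,\mu}^{+}$ gives $\Theta_{\lambda,\mu}^{+}\le\mathcal I_{\lambda,\mu}(w_{0},z_{0})<0$, which is the assertion.

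For (a) I would work purely algebraically on $\mathcal N_{\lambda,\mu}^{+}$, using only the Nehari identity $\phi_{(w,z)}'(1)=0$ from \eqref{phid1} (equivalently \eqref{22nlam}) together with the strict sign condition $\phi_{(w,z)}''(1)>0$ from \eqref{phid2}. Abbreviating $P=a(\|w\|^{2}+\|z\|^{2})$, $Q=b(\|w\|^{4}+\|z\|^{4})$, $K=\lambda\int_{\Omega}f|w(x,0)|^{q}\,dx+\mu\int_{\Omega}g|z(x,0)|^{q}\,dx$ and $R=2\int_{\Omega}|w(x,0)|^{\alpha}|z(x,0)|^{\beta}\,dx$, the Nehari identity is $P+Q=K+R$, and eliminating $K$ between it and $\phi''(1)>0$ produces the bound $R<\big((2-q)P+(4-q)Q\big)/(2^{*}_{s}-q)$. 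Substituting $K=P+Q-R$ into the reduced energy already computed in the proof of Lemma \ref{le44} gives $\mathcal I_{\lambda,\mu}(w,z)=(\tfrac12-\tfrac1q)P+(\tfrac14-\tfrac1q)Q+(\tfrac1q-\tfrac1{2^{*}_{s}})R$; since $1<q<2$ the coefficient of $R$ is positive, so inserting the upper bound for $R$ yields $\mathcal I_{\lambda,\mu}(w,z)<\tfrac{2-q}{q}\big(\tfrac1{2^{*}_{s}}-\tfrac12\big)P+\tfrac{4-q}{q}\big(\tfrac1{2^{*}_{s}}-\tfrac14\big)Q$. Because $q<2$, $a>0$ (so $P>0$) and $2^{*}_{s}\ge4$, the coefficient of $P$ is strictly negative and that of $Q$ is nonpositive, whence $\mathcal I_{\lambda,\mu}(w,z)<0$. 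The only thing to verify is the sign bookkeeping, and it is exactly the low-dimension restriction $2^{*}_{s}\ge4$ that neutralises the $Q$-term.

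For (b) I would exhibit one explicit element of $\mathcal N_{\lambda,\mu}^{+}$ through a fibering-map computation. Using \textbf{(fg)} (so $f^{+},g^{+}\not\equiv0$), I choose $u_{1},u_{2}\in H^{s}_{0}(\Omega)$ with disjoint supports and $\int_{\Omega}f|u_{1}|^{q}\,dx>0$, $\int_{\Omega}g|u_{2}|^{q}\,dx>0$, and set $(w_{0},z_{0})=(E_{s}(u_{1}),E_{s}(u_{2}))$, so that $K_{0}:=\lambda\int_{\Omega}f|u_{1}|^{q}\,dx+\mu\int_{\Omega}g|u_{2}|^{q}\,dx>0$ while the coupling term $\int_{\Omega}|u_{1}|^{\alpha}|u_{2}|^{\beta}\,dx=0$ vanishes by disjointness. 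The fiber map then reduces to $\phi_{(w_{0},z_{0})}(t)=\tfrac{a}{2}t^{2}(\|w_{0}\|^{2}+\|z_{0}\|^{2})+\tfrac{b}{4}t^{4}(\|w_{0}\|^{4}+\|z_{0}\|^{4})-\tfrac{t^{q}}{q}K_{0}$, which is coercive, vanishes at $t=0$, and is negative for small $t>0$ because $q<2$ lets the $t^{q}$ term dominate near the origin. Hence it attains an interior global minimum at some $t^{+}>0$ with $\phi_{(w_{0},z_{0})}(t^{+})<0$; evaluating $\phi_{(w_{0},z_{0})}''$ at the critical point and using $\phi_{(w_{0},z_{0})}'(t^{+})=0$ gives $a(2-q)(\|w_{0}\|^{2}+\|z_{0}\|^{2})+b(4-q)(t^{+})^{2}(\|w_{0}\|^{4}+\|z_{0}\|^{4})>0$, so indeed $(t^{+}w_{0},t^{+}z_{0})\in\mathcal N_{\lambda,\mu}^{+}$.

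The step I expect to be delicate is (b) in its \emph{general} form, i.e.\ when the coupling term $C=\int_{\Omega}|w(x,0)|^{\alpha}|z(x,0)|^{\beta}\,dx$ is positive: there one must check whether the line at height $K$ meets the graph of $m(t)=at^{2-q}(\|w\|^{2}+\|z\|^{2})+bt^{4-q}(\|w\|^{4}+\|z\|^{4})-2t^{2^{*}_{s}-q}C$, and ruling out the tangential (degenerate) case is exactly where $(\lambda,\mu)\in\mc M_{_{\Gamma_{1}}}$ enters via Lemma \ref{l22.2} ($\mathcal N_{\lambda,\mu}^{0}=\emptyset$). The disjoint-support choice above avoids this bookkeeping altogether and already yields $\Theta_{\lambda,\mu}^{+}\le\phi_{(w_{0},z_{0})}(t^{+})<0$; combined with $\Theta_{\lambda,\mu}\le\Theta_{\lambda,\mu}^{+}$ this closes the proof.
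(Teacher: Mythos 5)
Your proposal is correct, and its central computation coincides with the paper's: on $\mathcal N_{\lambda,\mu}^{+}$ you eliminate the weight term $K$ via the Nehari identity and then use $\phi_{(w,z)}''(1)>0$ to bound the coupling term, arriving at exactly the paper's estimate $\mathcal I_{\lambda,\mu}(w,z)\le -\frac{(2-q)(2^*_s-2)}{2\,2^*_s q}\,a(\|w\|^2+\|z\|^2)-\frac{(4-q)(2^*_s-4)}{4\,2^*_s q}\,b(\|w\|^4+\|z\|^4)<0$, with the restriction $2^*_s\ge 4$ playing the same role of keeping the $b$-term nonpositive. Where you genuinely diverge is in securing that $\mathcal N_{\lambda,\mu}^{+}\neq\emptyset$: the paper takes an arbitrary $(w,z)$ with $\lambda\int f|w(\cdot,0)|^q+\mu\int g|z(\cdot,0)|^q>0$ and projects it onto $\mathcal N_{\lambda,\mu}^{+}$ by invoking Lemma \ref{L37}, whose Case-1 fibering analysis in fact requires $(\lambda,\mu)\in\mc M_{_{\Gamma_2}}$ (a point the paper glosses over, stating the lemma under $\mc M_{_{\Gamma_1}}$ only), whereas you pick a disjoint-support pair so that the coupling integral vanishes and the fiber map degenerates to the elementary concave-convex profile $\frac{a}{2}t^2A+\frac{b}{4}t^4B-\frac{t^q}{q}K_0$, whose interior global minimum $t^+$ satisfies $\phi''(t^+)=(2-q)aA+(4-q)b(t^+)^2B>0$ and $\phi(t^+)<0$ for \emph{all} $\lambda,\mu>0$. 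Your route is slightly more self-contained and sidesteps the $\Gamma_1$ versus $\Gamma_2$ bookkeeping; the paper's route has the advantage of showing that \emph{every} $(w,z)$ with positive weight term generates a point of $\mathcal N_{\lambda,\mu}^{+}$, a fact reused later (e.g.\ in ruling out semitrivial minimizers). Both are valid proofs of the stated lemma.
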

\begin{proof}
Let $(w, z) \in \mathcal H(\mc C)$ be such that $$\lambda\int_{\Omega} f(x)|w(x, 0)|^{q}dx+\mu\int_{\Omega}g(x)|z(x, 0)|^qdx>0.$$ Then by Lemma \ref{L37}, for $(\lambda,\mu)\in\mc M_{_{\Gamma_1}}$,  there exists $ t^+((w, z)) > 0$ such that  $(t^+w, t^+z) \in \mathcal{N}_{\lambda,\mu}^{+}$. Denote $t^+w=w_1$ and $t^+z=z_1$. Therefore,
\begin{align*}
 \mathcal{I}_{\lambda,\mu}({w}_1,{z}_1)& =\left(\frac{1}{2}-\frac{1}{q}\right)a (\|{w}_1\|^{2} +\|z_1\|^{2})+ \left(\frac{1}{4}-\frac{1}{q}\right)b (\|{w}_1\|^{4}+\|z_1\|^{4})\\
  &+{2} \left(\frac{1}{q}-\frac{1}{2^*_s}\right) \int_{\Omega} {|{w}_1(x, 0)|^\alpha|z_1(x, 0)|^\beta dx}.
\end{align*}
Since $(w_1,z_1)\in \mathcal{N}_{\lambda,\mu}^{+}$,we get
\begin{align*}
\mathcal{I}_{\lambda,\mu}({w}_1, z_1) &\leq -\frac{(2-q)(2^*_s-2)}{2(2^*_s)q} a(\|{w}_1\|^{2} +\|z_1\|^{2})\\&\quad-\frac{(4-q)(2^*_s-4)}{4(2^*_s)q} b(\|{w}_1\|^{4} +\|z_1\|^{4})<0.
\end{align*}
\end{proof}
\noindent Let
\[\Gamma_2:= (\kappa_s S(s, N))^\frac{q}{2}{\left(\frac{S(s, \alpha, \beta)^\frac{2^*_s}{2}(2-q)}{2^*_s-2}\right)^\frac{2-q}{2^*_s-2}\left(\frac{a(2^*_s-2)}{2^*_s-q}\right)^\frac{2^*_s-q}{2^*_s-2}}.\]
 \noi Now we discuss the behavior of fibering maps with respect to the sign changing weights in the following two cases.\\
\noindent \textbf{Case 1:} $\lambda\ds \int_{\Omega} f(x)|w(x, 0)|^qdx+\mu\ds \int_{\Omega} g(x)|z(x, 0)|^qdx>0$.
First we define $\psi_{(w,z)}:\mathbb{R}^+\rightarrow \mathbb{R}$ as
\begin{align*}
\psi_{(w,z)}(t)&=at^{2-2^*_s}(\|w\|^2+\|z\|^2)+b t^{4-2^*_s}(\|w\|^{4}+\|z\|^{4})\\
&-t^{q-2^*_s}\left(\lambda\ds \int_{\Omega} f(x)|w(x, 0)|^qdx+\mu\ds \int_{\Omega} g(x)|z(x, 0)|^qdx\right).
\end{align*}
Observe that $(tw,tz)\in \mathcal{N}_{\lambda,\mu}$ if and only if
\[
\psi_{(w,z)}(t)=2\int_{\Omega} |w(x, 0)|^\alpha|z(x, 0)|^\beta dx.
\]
It is clear that 
$
\lim_{t\rightarrow 0^+}\psi_{(w,z)}(t)= -\infty $ and $ \lim_{t\rightarrow \infty}\psi_{(w,z)}(t)=0.$
Moreover,
\begin{equation}\label{siutd}
\begin{aligned}
 &\psi_{(w,z)}^{\prime}(t)=a(2-2^*_s)t^{1-2^*_s}(\|w\|^2+\|z\|^2)+b(4-2^*_s)t^{3-2^*_s}(\|w\|^{4}+\|z\|^{4})\\
 &-(q-2^*_s)t^{q-2^*_s-1}\left(\lambda\ds \int_{\Omega} f(x)|w(x, 0)|^qdx+\mu\ds \int_{\Omega} g(x)|z(x, 0)|^qdx\right).
\end{aligned}
\end{equation}
Rewrite $\psi_{(w,z)}^{\prime}(t)=t^{3-2^*_s} \mc A_{(w, z)}(t)$, where
\begin{align*}
\mc A_{(w, z)}(t)&=a(2-2^*_s)t^{-2}(\|w\|^2+\|z\|^2)+b(4-2^*_s)(\|w\|^{4}+\|z\|^{4})\\
 &-(q-2^*_s)t^{q-4}\left(\lambda\ds \int_{\Omega} f(x)|w(x, 0)|^qdx+\mu\ds \int_{\Omega} g(x)|z(x, 0)|^qdx\right).
\end{align*}
Then it can be shown that there exists unique $t^*>0$ such that $\mc A^\prime_{(w, z)}(t^*)=0$. Indeed,
$$
t^*=\left(\frac{(2^*_s-q)(4-q)\left(\lambda\ds \int_{\Omega} f(x)|w(x, 0)|^qdx+\mu\ds \int_{\Omega} g(x)|z(x, 0)|^qdx\right)}{2a(2^*_s-2)(\|w\|^2+\|z\|^2)}\right)^\frac{1}{2-q}.
$$
Also 
$$
\lim _{t\rightarrow 0^+}\mc A_{(w, z)}(t)=-\infty\;\textrm{ and }\; \lim_{t\rightarrow \infty}\mc A_{(w, z)}(t)=b(4-2^*_s)(\|w\|^{4}+\|z\|^{4}),
$$ which implies that there exists unique $t_*>0$ such that $\mc A_{(w, z)}(t_*)=0$. Therefore, from $\psi_{(w,z)}^{\prime}(t)=t^{3-2^*_s} \mc A_{(w, z)}(t)$, we get $t_*$ as a unique critical point of $\psi_{(w,z)}(t)$, which is the global maximum point.

Now we can estimate $\psi_{(w,z)}(t_{*})$ from below as follows.

\begin{align*}
\psi_{(w,z)}(t)&=\overline{\psi}_{(w, z)}(t)+b t^{4-2^*_s}(\|w\|^{4}+\|z\|^{4})\geq \overline{\psi}_{(w, z)}(t),
\end{align*}
where 
\begin{align*}
\overline{\psi}_{(w,z)}(t)&=at^{2-2^*_s}(\|w\|^2+\|z\|^2)
\\&\quad -t^{q-2^*_s}\left(\lambda\ds \int_{\Omega} f(x)|w(x, 0)|^qdx+\mu\ds \int_{\Omega} g(x)|z(x, 0)|^qdx\right).
\end{align*}
Using elementary calculus, we obtain
\begin{align*}
\max_{t>0}\overline{\psi}_{(w,z)}(t)&=\overline{\psi}_{(w,z)}(t_c)
\end{align*}
with
 $$
 t_c=\left(\frac{(2^*_s-q)\left(\lambda\ds \int_{\Omega} f(x)|w(x, 0)|^qdx+\mu\ds \int_{\Omega} g(x)|z(x, 0)|^qdx\right)}{(2^*_s-2)a(\|w\|^2+\|z\|^2)}\right)^\frac{1}{2-q}.
$$
Therefore, 
\begin{align*}
{\psi}_{(w,z)}(t_*)& \geq \overline{\psi}_{(w,z)}(t_c)\\&\geq \frac{\left(\frac{2-q}{2^*_s-2}\right)\left(\frac{a(2^*_s-2)}{2^*_s-q}\right)^\frac{2^*_s-q}{2-q}(\|w\|^2+\|z\|^2)^{\frac{2^*_s}{2}}}{\left((\kappa_s S(s, N))^\frac{-q}{2}\left((\lambda\|f\|_\gamma)^\frac{2}{2-q}+(\mu\|g\|_\gamma)^\frac{2}{2-q}\right)^\frac{2-q}{2}\right)^\frac{2^*_s-2}{2-q}}.
\end{align*}
Hence if $(\lambda,\mu)\in \mc M_{_{\Gamma_2}}$, then
there exists unique $t^+ = t^+((w,z)) < t_{*}$ and $t^- = t^-((w,z)) > t_{*},$ such that
\[\psi_{(w,z)}(t^+) = {2}\int_{\Omega}  |w(x, 0)|^{\alpha}|z(x, 0)|^\beta dx= \psi_{(w, z)}(t^-).
\]
 That is,  $(t^+w,t^+z)$ and $ (t^-w,t^-z) \in \mathcal{N}_{\lambda,\mu}.$ Besides this, $\psi^{'}_{(w,z)}(t^+) > 0$ and $\psi_{(w,z)}^{'}(t^-) < 0$ implies $(t^+w,t^+z) \in \mathcal{N}^{+}_{\lambda,\mu}$ and $(t^-w,t^-z) \in \mathcal{N}^{-}_{\lambda,\mu}.$  Since
 \[\phi^{'}_{(w,z)}(t) = t^{2^*_s}(\psi_{(w,z)}(t)- \int_{\Omega}  |w(x, 0)|^{\alpha}|z(x, 0)|^\beta dx),\]
  we get $\phi^{'}_{(w,z)}(t)<0$ for all $t \in [0, t^+)$ and $\phi^{'}_{(w,z)}(t)>0$ for all $t \in (t^+, t^-)$. Thus $\mathcal{I}_{\lambda,\mu}(t^+w,t^+z) = \displaystyle\min_{0 \leq t \leq t^-}\mathcal{I}_{\lambda,\mu} (tw,tz).$ Also $\phi^{'}_{(w,z)}(t) > 0$ for all $t \in [t^+, t^-),
\phi^{'}_{(w,z)}(t^-) = 0$ and $\phi^{'}_{(w,z)}(t) < 0$ for all $t \in (t^-, \infty)$ implies that $\mathcal{I}_{\lambda,\mu}(t^-w, t^-z)
= \displaystyle\max_{t \geq t_{*}} \mathcal{I}_{\lambda,\mu}(tw, tz).$\\
 \noindent \textbf{Case 2:} $\lambda\ds \int_{\Omega} f(x)|w(x, 0)|^qdx+\mu\ds \int_{\Omega} g(x)|z(x, 0)|^qdx \leq0$.\\
\noindent  Note that $\lim_{t \rightarrow \infty}\psi_{(w, z)}(t)=0$ and  $\lim_{t \rightarrow 0}\psi_{(w, z)}(t)=+\infty$. Additionally, $\psi_{(w, z)}(t)\geq 0$ for all $t\geq 0$ and is decreasing. Hence for all $\lambda, \mu >0$ there exists $\hat t>0$ such that $(\hat tw,\hat tz)\in \mathcal{N}_{\lambda,\mu}^-$ and $\mathcal{I}_{\lambda,\mu}(\hat tw,\hat tz)=\displaystyle\max_{ t\geq 0} \mathcal{I}_{\lambda,\mu}(tw,tz)$. 
 \begin{figure}
	\centering
	\begin{tikzpicture}[scale=0.45]
	\draw[thick, ->] (-1, 0) -- (8, 0);
	\draw[thick, ->] (0, -3.5) -- (0, 5);
    \draw[thick] (0.2, -3) .. controls (2, 6) and (4, 0) ..(7, 0.2);
	\draw (8,0) node[below]{$t$};
	\draw (0, 0) node[below left]{$0$};
	\draw (0,5) node[left]{$\psi_{(w, z)(t)}$};
   \draw (3,-1) node[below]{\textrm{Case 1}};
    \end{tikzpicture}
    \hspace{.5cm}
    \begin{tikzpicture}[scale=0.45]
    \draw[thick, ->] (-1, 0) -- (8, 0);
    \draw[thick, ->] (0, -3.5) -- (0, 5);
	\draw[thick] (0.2 ,4.5) .. controls (0.6, 0.6) .. (7, 0.2) ;
	\draw (8,0) node[below]{$t$};
	\draw (0, 0) node[below left]{$0$};
	\draw (0,5) node[left]{$\psi_{(w, z)(t)}$};
\draw (3,-1) node[below]{\textrm{Case 2}};
	\end{tikzpicture}	
    \begin{tikzpicture}[scale=0.45]
	\draw[thick, ->] (-1, 0) -- (8, 0);
	\draw[thick, ->] (0, -3) -- (0, 4);
    \draw[thick] (0 ,0) .. controls (0.3,0.3) and (0.5,-1.5) .. (1, -1.5) .. controls (2, -1.5) and (2, 1.5) .. (3, 1.5) .. controls (3.7, 1.5) and (4, 0)  .. (7, -2); 
	\draw (8,0) node[below]{$t$};
	\draw (0, 0) node[below left]{$0$};
	\draw (0,4) node[left]{$\phi_{(w, z)(t)}$};
\draw (4,-2) node[below]{\textrm{Case 1}};
	\end{tikzpicture}
	\hspace{.5cm}
	\begin{tikzpicture}[scale=0.45]
   \draw[thick, ->] (-1, 0) -- (8, 0);
	\draw[thick, ->] (0, -3) -- (0, 4);
	\draw[thick] (0 ,0.0) .. controls (0.9, -0.2) and (2,5) .. (7, -2);
	\draw (8,0) node[below]{$t$};
	\draw (0, 0) node[below left]{$0$};
	\draw (0,4) node[left]{$\phi_{(w, z)(t)}$};
   \draw (4,-2) node[below]{\textrm{Case 2}};
    \end{tikzpicture}
\end{figure}
Thus from above discussion we have the following Lemma.
\begin{lem}\label{L37} 
\noindent (i) If $\lambda \ds \int_{\Omega} f(x)|w(x, 0)|^qdx+\mu \ds \int_{\Omega} g(x)|z(x, 0)|^qdx >0 $,  there exist unique $t^+((w,z))<t_{*}<t^-((w,z))$ such that $(t^\pm w,t^\pm z)\in \mathcal{N}_{\lambda,\mu}^\pm$ and $\mathcal{I}_{\lambda,\mu}(t^+w,t^+z)=\displaystyle\min_{0\leq t\leq t^-} \mathcal{I}_{\lambda,\mu}(tw,tz)$, $\mathcal{I}_{\lambda,\mu}(t^-w,t^-z) = \displaystyle \max_{t\geq t_{*}} \mathcal{I}_{\lambda,\mu}(tw,tz)$.\\
(ii) If $\lambda \ds \int_{\Omega} f(x)|w(x, 0)|^qdx+\mu \ds \int_{\Omega} g(x)|z(x, 0)|^qdx <0$, there exists a unique $\hat t((w, z))>0$ such that $(\hat tw,\hat tz)\in \mathcal{N}_{\lambda,\mu}^-$ and $\mathcal{I}_{\lambda,\mu}(\hat tw,\hat tz)=\displaystyle\max_{ t\geq 0} \mathcal{I}_{\lambda,\mu}(tw_,tz)$.
\end{lem}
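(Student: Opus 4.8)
The plan is to read off both assertions directly from the shape of the auxiliary map $\psi_{(w,z)}$ analyzed in Cases 1 and 2 above, using two elementary dictionaries. First, by its very definition $(tw,tz)\in\mathcal N_{\lambda,\mu}$ if and only if $\psi_{(w,z)}(t)=2\int_\Omega|w(x,0)|^\alpha|z(x,0)|^\beta\,dx$. Second, from the factorization $\phi_{(w,z)}'(t)=t^{2^*_s-1}(\psi_{(w,z)}(t)-2\int_\Omega|w(x,0)|^\alpha|z(x,0)|^\beta\,dx)$, differentiating once and evaluating at a Nehari point (where the bracket vanishes) gives $\phi_{(w,z)}''(t)=t^{2^*_s-1}\psi_{(w,z)}'(t)$. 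Hence at any $t$ with $(tw,tz)\in\mathcal N_{\lambda,\mu}$ the sign of $\phi_{(w,z)}''(t)$ coincides with the sign of $\psi_{(w,z)}'(t)$, which is precisely the criterion separating $\mathcal N^+_{\lambda,\mu}$ from $\mathcal N^-_{\lambda,\mu}$.

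For part (i), I would invoke the Case~1 analysis: $\psi_{(w,z)}(t)\to-\infty$ as $t\to0^+$, $\psi_{(w,z)}(t)\to0$ as $t\to\infty$, and $\psi_{(w,z)}$ has a single critical point $t_*$, hence a unique global maximum there. Together with the lower bound $\psi_{(w,z)}(t_*)\ge\overline\psi_{(w,z)}(t_c)$ and the threshold $\Gamma_2$, for $(\lambda,\mu)\in\mathcal M_{_{\Gamma_2}}$ one obtains $\psi_{(w,z)}(t_*)>2\int_\Omega|w(x,0)|^\alpha|z(x,0)|^\beta\,dx>0$. Since the right-hand side is a positive constant lying strictly below the peak while $\psi_{(w,z)}$ climbs from $-\infty$ to that peak and then decreases back to $0$, the equation $\psi_{(w,z)}(t)=2\int_\Omega|w(x,0)|^\alpha|z(x,0)|^\beta\,dx$ has exactly two roots $t^+<t_*<t^-$, unimodality excluding any others. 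On the ascending branch $\psi_{(w,z)}'(t^+)>0$ gives $(t^+w,t^+z)\in\mathcal N^+_{\lambda,\mu}$, and on the descending branch $\psi_{(w,z)}'(t^-)<0$ gives $(t^-w,t^-z)\in\mathcal N^-_{\lambda,\mu}$. The factorization then shows $\phi_{(w,z)}'<0$ on $(0,t^+)$, $\phi_{(w,z)}'>0$ on $(t^+,t^-)$, and $\phi_{(w,z)}'<0$ on $(t^-,\infty)$; consequently $t^+$ minimizes $\phi_{(w,z)}$ over $[0,t^-]$ and, since $t_*\in(t^+,t^-)$, the point $t^-$ maximizes $\phi_{(w,z)}$ over $[t_*,\infty)$, as claimed.

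For part (ii), I would use Case~2: when $\lambda\int_\Omega f(x)|w(x,0)|^q\,dx+\mu\int_\Omega g(x)|z(x,0)|^q\,dx<0$, every term of $\psi_{(w,z)}$ is nonnegative with nonpositive derivative, because each exponent $2-2^*_s$, $4-2^*_s$, $q-2^*_s$ is $\le0$ under $2^*_s\ge4$ and the negative sign of the weighted integral flips the last term. Thus $\psi_{(w,z)}$ decreases strictly from $+\infty$ to $0$, and since $2\int_\Omega|w(x,0)|^\alpha|z(x,0)|^\beta\,dx>0$ there is a unique $\hat t$ with $\psi_{(w,z)}(\hat t)=2\int_\Omega|w(x,0)|^\alpha|z(x,0)|^\beta\,dx$. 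Strict monotonicity gives $\psi_{(w,z)}'(\hat t)<0$, whence $\phi_{(w,z)}''(\hat t)<0$ and $(\hat tw,\hat tz)\in\mathcal N^-_{\lambda,\mu}$; and because $\phi_{(w,z)}'>0$ on $(0,\hat t)$ and $\phi_{(w,z)}'<0$ on $(\hat t,\infty)$, this point is the global maximum of $\phi_{(w,z)}$ on $[0,\infty)$.

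The only genuinely delicate step is the one hidden in Case~1: showing that $\psi_{(w,z)}$ is unimodal and that its peak strictly exceeds the constant $2\int_\Omega|w(x,0)|^\alpha|z(x,0)|^\beta\,dx$. Unimodality reduces to the function $\mathcal A_{(w,z)}$ having a single sign change, which follows from its unique critical point together with its limits $-\infty$ and $b(4-2^*_s)(\|w\|^4+\|z\|^4)\le0$; the quantitative peak estimate is where the best Sobolev constant $S(s,N)$, the trace inequality of Lemma~\ref{22traceemb}, and the precise value of $\Gamma_2$ enter, and it is exactly what forces the restriction $(\lambda,\mu)\in\mathcal M_{_{\Gamma_2}}$. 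Everything else is bookkeeping with the two sign dictionaries recorded at the outset.
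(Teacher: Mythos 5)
Your proposal is correct and follows essentially the same route as the paper: Lemma \ref{L37} is proved there by exactly the Case~1/Case~2 analysis of $\psi_{(w,z)}$ preceding the statement (unimodality via $\mc A_{(w,z)}$, the lower bound $\psi_{(w,z)}(t_*)\ge\overline\psi_{(w,z)}(t_c)$ against the threshold $\Gamma_2$, and the sign dictionary between $\psi_{(w,z)}'$ and $\phi_{(w,z)}''$ at Nehari points). Your displayed factorization $\phi_{(w,z)}'(t)=t^{2^*_s-1}\bigl(\psi_{(w,z)}(t)-2\int_\Omega|w(x,0)|^\alpha|z(x,0)|^\beta\,dx\bigr)$ is in fact the corrected form of the paper's formula (which misprints the exponent and drops the factor $2$), and like the paper you implicitly assume $\int_\Omega|w(x,0)|^\alpha|z(x,0)|^\beta\,dx>0$ when producing $t^-$ and $\hat t$.
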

\noindent The behavior of $\psi_{(w,z)}(t)$ and $\phi_{(w,z)}(t)$ in different cases has been shown in the above figures.  \\

\noi Concerning the component set $\mc N_{\lambda, \mu}^-$, we have the following {lemma} which helps us to show that the set $\mc N_{\lambda, \mu}^-$ is closed in the $\mc H(\mc C)-$ topology.
\begin{lem}\label{nlw}
There exists $\delta>0$ such that $\|(w, z)\|\geq \delta$ for all $(w, z)\in \mc N_{\lambda, \mu}^-$.
\end{lem}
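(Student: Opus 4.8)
The plan is to exploit both defining relations of $\mc N_{\lambda,\mu}^-$ simultaneously: membership in $\mc N_{\lambda,\mu}$ via $\phi'_{(w,z)}(1)=0$ in \eqref{phid1}, together with the strict inequality $\phi''_{(w,z)}(1)<0$ from \eqref{phid2}. Writing $A:=\|w\|^2+\|z\|^2$, $B:=\|w\|^4+\|z\|^4$, $F:=\lambda\int_\Omega f(x)|w(x,0)|^q dx+\mu\int_\Omega g(x)|z(x,0)|^q dx$ and $G:=\int_\Omega |w(x,0)|^\alpha|z(x,0)|^\beta dx$, the Nehari identity reads $aA+bB=F+2G$, while the condition $\phi''_{(w,z)}(1)<0$ reads $aA+3bB-(q-1)F-2(2^*_s-1)G<0$.

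First I would use the Nehari identity to substitute $F=aA+bB-2G$ into the second relation. After collecting terms the weight-dependent quantity $F$ cancels, leaving the clean inequality $a(2-q)A+b(4-q)B<2(2^*_s-q)G$. This is the crucial algebraic step; the point is that this particular linear combination removes all dependence on the sign-changing data $f,g$ and on the parameters $\lambda,\mu$, so the resulting bound is uniform over the whole component $\mc N_{\lambda,\mu}^-$.

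Next, since $b>0$, $B\geq 0$ and $4-q>0$, the term $b(4-q)B$ is nonnegative and may be dropped, giving $a(2-q)A<2(2^*_s-q)G$. To close the estimate I would control $G$ from above by $A$ using the Sobolev-type constant $S(s,\alpha,\beta)$ of \eqref{consys}: since $\|(w,z)\|^2=A=\kappa_s\int_{\mc C}y^{1-2s}(|\nabla w|^2+|\nabla z|^2)dxdy$, the definition \eqref{consys} yields $G^{2/2^*_s}\leq A/(\kappa_s S(s,\alpha,\beta))$, hence $G\leq (A/(\kappa_s S(s,\alpha,\beta)))^{2^*_s/2}$. Substituting and dividing by $A>0$ (recall $(w,z)\neq 0$ on $\mc N_{\lambda,\mu}^-$) leaves $A^{(2^*_s-2)/2}>\tfrac{a(2-q)}{2(2^*_s-q)}(\kappa_s S(s,\alpha,\beta))^{2^*_s/2}$, which produces the explicit lower bound $\|(w,z)\|\geq\delta$ with $\delta:=\big(\tfrac{a(2-q)(\kappa_s S(s,\alpha,\beta))^{2^*_s/2}}{2(2^*_s-q)}\big)^{1/(2^*_s-2)}$.

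The argument is almost entirely mechanical once the correct linear combination is identified, so there is no serious analytic obstacle; the only points requiring care are the bookkeeping when eliminating $F$ and the verification of the signs of the surviving coefficients, in particular that $q-2^*_s<0$ so that the $G$-term passes to the right with the correct sign. The hypothesis $b>0$ is used precisely here: it guarantees $b(4-q)B\geq 0$ and hence the clean bound $a(2-q)A<2(2^*_s-q)G$ rather than one entangled with the quartic Kirchhoff term.
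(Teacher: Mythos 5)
Your proposal is correct and follows essentially the same route as the paper: eliminate the weight term $F$ by combining $\phi''_{(w,z)}(1)<0$ with the Nehari identity \eqref{22nlam} to reach $a(2-q)(\|w\|^2+\|z\|^2)<2(2^*_s-q)\int_\Omega|w(x,0)|^\alpha|z(x,0)|^\beta dx$, then bound the right-hand side via the constant $S(s,\alpha,\beta)$ from \eqref{consys}. Your explicit value of $\delta$ matches the constant implicit in the paper's concluding inequality $\|(w,z)\|^{2^*_s-2}>C$.
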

\begin{proof}
Let $(w, z)\in \mc N_{\lambda, \mu}^-$ then from \eqref{phid2}, we get
\begin{align*}
&a(\|w\|^2+\|z\|^2)+3b(\|w\|^4+\|z\|^4)-{\lambda}{(q-1)}\int_{\Omega }f(x)|w(x,0)|^qdx\\&-{\mu(q-1)}\int_{\Omega }g(x)|z(x,0)|^qdx<2(2^*_s-1)\int_{\Omega }|w(x,0)|^{\alpha}|z(x, 0)|^\beta dx.
\end{align*}
 Now using \eqref{22nlam} together with \eqref{consys}, we get
 \begin{align*}
a(2-q)(\|w\|^2+\|z\|^2)&<2(2^*_s-q)\int_{\Omega }|w(x,0)|^{\alpha}|z(x, 0)|^\beta dx,\\
 &<2(2^*_s-q)(\kappa_sS(s, \alpha, \beta))^\frac{-2^*_s}{2}(\|w\|^2+\|z\|^2)^\frac{2^*_s}{2}
 \end{align*}
 which implies that $\|(w, z)\|^{2^*_s-2}>C$. Hence $\|(w, z)\|\geq \delta$ for some $\delta>0$.
 \end{proof}
\begin{cor}\label{nlclosed}
{Assume $(\lambda,\mu)\in \mc M_{_{\Gamma_1}}$. Then} $\mc N_{\lambda, \mu}^-$ is closed set in the $\mc H(\mc C)-$ topology.
\end{cor}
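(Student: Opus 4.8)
The plan is to argue by sequential closedness: I would take an arbitrary sequence $(w_n,z_n)\in\mc N_{\lambda,\mu}^-$ with $(w_n,z_n)\to(w,z)$ in $\mc H(\mc C)$ and show that the limit $(w,z)$ again lies in $\mc N_{\lambda,\mu}^-$. The first step is to verify that the limit is admissible, i.e. $(w,z)\neq 0$. This is immediate from Lemma \ref{nlw}: every element of $\mc N_{\lambda,\mu}^-$ has norm at least $\delta$, so $\|(w,z)\|=\lim_n\|(w_n,z_n)\|\geq\delta>0$, and in particular the limit cannot collapse to the excluded origin.

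Next I would pass to the limit in the defining relations. The maps $(w,z)\mapsto\phi_{(w,z)}^{\prime}(1)$ and $(w,z)\mapsto\phi_{(w,z)}^{\prime\prime}(1)$ given in \eqref{phid1} and \eqref{phid2} are continuous on $\mc H(\mc C)$: the norm terms are clearly continuous, while the continuity of the integral terms $\int_\Omega f|w(\cdot,0)|^q\,dx$, $\int_\Omega g|z(\cdot,0)|^q\,dx$ and $\int_\Omega|w(\cdot,0)|^\alpha|z(\cdot,0)|^\beta\,dx$ follows from the trace embedding of Lemma \ref{22traceemb} together with the integrability hypothesis \textbf{(fg)}. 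Since $\phi_{(w_n,z_n)}^{\prime}(1)=0$ for every $n$, continuity yields $\phi_{(w,z)}^{\prime}(1)=0$, and combined with $(w,z)\neq 0$ this gives $(w,z)\in\mc N_{\lambda,\mu}$. Likewise, the strict inequalities $\phi_{(w_n,z_n)}^{\prime\prime}(1)<0$ pass to the weak limit $\phi_{(w,z)}^{\prime\prime}(1)\leq 0$, so a priori $(w,z)\in\mc N_{\lambda,\mu}^-\cup\mc N_{\lambda,\mu}^0$.

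Finally, I would invoke the hypothesis $(\lambda,\mu)\in\mc M_{_{\Gamma_1}}$ to close the argument. By Lemma \ref{l22.2} the degenerate stratum is empty, $\mc N_{\lambda,\mu}^0=\emptyset$, so the only possibility remaining is $\phi_{(w,z)}^{\prime\prime}(1)<0$, that is $(w,z)\in\mc N_{\lambda,\mu}^-$. This establishes closedness. The main subtlety is precisely this last step: passing to the limit only delivers the non-strict inequality $\phi_{(w,z)}^{\prime\prime}(1)\leq 0$, and without the emptiness of $\mc N_{\lambda,\mu}^0$ the limit could in principle slip into the degenerate set $\mc N_{\lambda,\mu}^0$; it is exactly the restriction $(\lambda,\mu)\in\mc M_{_{\Gamma_1}}$, through Lemma \ref{l22.2}, that rules this out and makes the hypothesis indispensable.
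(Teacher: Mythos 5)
Your proposal is correct and follows essentially the same route as the paper: both arguments combine Lemma \ref{nlw} (to exclude the origin as a limit point) with the emptiness of $\mc N_{\lambda,\mu}^{0}$ from Lemma \ref{l22.2} (to exclude the degenerate stratum into which the non-strict limiting inequality $\phi_{(w,z)}^{\prime\prime}(1)\leq 0$ could otherwise land). You merely spell out the continuity of $\phi_{(\cdot,\cdot)}^{\prime}(1)$ and $\phi_{(\cdot,\cdot)}^{\prime\prime}(1)$ that the paper leaves implicit in the identity $\overline{\mc N_{\lambda,\mu}^{-}}=\mc N_{\lambda,\mu}^{-}\cup\{0\}$.
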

\begin{proof}
Let $\{(w_k,z_k)\}$ be a sequence in $\mc N_{\lambda, \mu}^-$ such that $(w_k, z_k)\rightarrow (w, z)$ in $\mc H(\mc C)$. {Since  $\mc N_{\lambda, \mu}^0=\emptyset$} for $(\lambda,\mu)\in \mc M_{_{\Gamma_1}}$ , $(w, z)\in \overline {\mc N_{\lambda, \mu}^-}=\mc N_{\lambda, \mu}^-\cup\{0\}$. Now using Lemma \ref{nlw}, we get $\|(w, z)\|=\displaystyle \lim_{k\rightarrow \infty}\|(w_k, z_k)\|\geq \delta>0$. Hence $(w, z)\neq (0, 0)$. Therefore $(w, z)\in \mc N_{\lambda, \mu}^-$.
\end{proof}

\noi Now we state the following Lemma {providing a local parametrization around any point of $\mathcal{N}_{\lambda,\mu}$}.

\begin{lem} \label{zii}
For a given $(w,z) \in \mathcal{N}_{\lambda,\mu}$ and $(\lambda,\mu) \in \mc M_{_{\Gamma_1}},$ there exists {$\delta > 0$} and a differentiable function
{$\xi : \mathcal{B}(0,\delta) \subseteq \mathcal H(\mc C) \rightarrow \mathbb{R}$} such that $\xi(0)=1,$ the function $\xi(v_1,v_2)(w-v_1,z-v_2)\in \mathcal{N}_{\lambda,\mu}$
and $\langle\xi^{'}(0), (v_1,v_2)\rangle=\frac{N}{D}$ for all $(v_1, v_2)\in \mc H(\mc C)$, where
\begin{align*}
N&=2a (\langle w, v_1\rangle+\langle z, v_2\rangle)+ 4b (\|w\|^{2}\langle w, v_1\rangle+\|z\|^{2}\langle z, v_2\rangle)\\&\quad -q\lambda \int_{\Omega} f(x)|w(x, 0)|^{q-2}w(x, 0)v_1(x, 0)dx\\&\quad - q\mu \int_{\Omega} g(x)|z(x, 0)|^{q-2}z(x, 0)v_2(x, 0)dx)\\&\quad- 2\alpha\int_{\Omega} |w(x, 0)|^{\alpha-2}w(x, 0)|z(x, 0)|^\beta v_1(x, 0) dx\\&\quad -2\beta \int_{\Omega } |w(x, 0)|^{\alpha}|z(x, 0)|^{\beta-2}z(x, 0) v_2(x,0) dx\\
 D&=(2^*_s-2)a(\|w\|^{2}+\|z\|^{2}) + (2^*_s-4)b(\|w\|^{4}+\|z\|^{4})\\
 &\quad -(2^*_s-q)\left(\lambda\int_{\Omega} f(x)|w(x,0)|^qdx+\mu\int_{\Omega} g(x)|z(x, 0)|^qdx\right).
\end{align*}

\end{lem}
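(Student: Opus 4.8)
The plan is to produce $\xi$ via the implicit function theorem applied to the Nehari constraint. Define $F:\mc H(\mc C)\times(0,\infty)\to\mb R$ by
\[
F((v_1,v_2),t)=\ld\mc I_{\la,\mu}'(t(w-v_1),t(z-v_2)),(t(w-v_1),t(z-v_2))\rd_*,
\]
which, writing $M(s)=a+bs$ explicitly, equals
\begin{align*}
F((v_1,v_2),t)&=at^2(\|w-v_1\|^2+\|z-v_2\|^2)+bt^4(\|w-v_1\|^4+\|z-v_2\|^4)\\
&\quad-\la t^q\int_\Omega f|(w-v_1)(x,0)|^qdx-\mu t^q\int_\Omega g|(z-v_2)(x,0)|^qdx\\
&\quad-2t^{2^*_s}\int_\Omega|(w-v_1)(x,0)|^\alpha|(z-v_2)(x,0)|^\beta dx.
\end{align*}
Under $\textbf{(fg)}$ the functional $\mc I_{\la,\mu}$ is $C^1$, hence so is $F$, and since $(w,z)\in\mc N_{\la,\mu}$ we have $F((0,0),1)=\phi_{(w,z)}'(1)=0$. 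Solving $F((v_1,v_2),t)=0$ for $t$ near $1$ as $(v_1,v_2)$ ranges near $0$ will give the desired $\xi$.

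First I would check the nondegeneracy in the $t$-variable. Differentiating in $t$ at $((0,0),1)$, and recalling $F((0,0),t)=t\,\phi_{(w,z)}'(t)$ together with $\phi_{(w,z)}'(1)=0$, gives $\pa_t F((0,0),1)=\phi_{(w,z)}''(1)$; substituting the Nehari identity \eqref{22nlam} to eliminate the critical integral rewrites this as $\phi_{(w,z)}''(1)=-D$, with $D$ the denominator in the statement. Because $(\la,\mu)\in\mc M_{_{\Gamma_1}}$, Lemma \ref{l22.2} yields $\mc N_{\la,\mu}^0=\emptyset$, so $\phi_{(w,z)}''(1)\neq0$ and thus $D\neq0$. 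The implicit function theorem then provides $\delta>0$ and a $C^1$ map $\xi:\mc B(0,\delta)\subseteq\mc H(\mc C)\to\mb R$ with $\xi(0)=1$ and $F((v_1,v_2),\xi(v_1,v_2))=0$ on $\mc B(0,\delta)$. Shrinking $\delta$ if necessary so that $\xi$ stays close to $1$ and $(w-v_1,z-v_2)\neq0$ (possible since $(w,z)\neq0$), the pair $\xi(v_1,v_2)(w-v_1,z-v_2)$ is nonzero and, by construction, satisfies the Nehari condition $F((v_1,v_2),\xi(v_1,v_2))=0$, hence belongs to $\mc N_{\la,\mu}$.

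For the derivative formula I would differentiate the identity $F((v_1,v_2),\xi(v_1,v_2))=0$ at $(0,0)$, which by the chain rule gives
\[
\ld\xi'(0),(v_1,v_2)\rd=-\frac{\ld\pa_{(v_1,v_2)}F((0,0),1),(v_1,v_2)\rd}{\pa_t F((0,0),1)}.
\]
Computing the $(v_1,v_2)$-gradient of $F$ term by term—applying the chain rule to the norms $\|w-v_1\|^2$, $\|w-v_1\|^4$ and to the power nonlinearities, with the extra sign arising from the shift $w\mapsto w-v_1$—identifies the numerator with $N$, while the denominator is $\pa_t F((0,0),1)=-D$; the chain-rule signs then yield $\ld\xi'(0),(v_1,v_2)\rd=N/D$.

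The main obstacle is precisely the nondegeneracy $\pa_t F((0,0),1)\neq0$: without it the implicit function theorem is inapplicable and $\xi$ need not exist. This nonvanishing is exactly the statement $\mc N_{\la,\mu}^0=\emptyset$, which is what forces the hypothesis $(\la,\mu)\in\mc M_{_{\Gamma_1}}$ through Lemma \ref{l22.2}. The remaining work—verifying $F\in C^1$ so the theorem applies, and the careful but routine differentiation identifying $N$ and $D$—is direct given condition $\textbf{(fg)}$ and the trace embedding of Lemma \ref{22traceemb}.
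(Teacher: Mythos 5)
Your proof follows essentially the same route as the paper's: both set up a $C^1$ function of $(t,(v_1,v_2))$ encoding the Nehari constraint for $t(w-v_1,z-v_2)$, observe that it vanishes at $(1,(0,0))$ with nonzero $t$-derivative because $\mathcal N^0_{\lambda,\mu}=\emptyset$ (Lemma \ref{l22.2}), and invoke the implicit function theorem. The paper's own proof is in fact terser than yours: it stops at the nondegeneracy check and never verifies the formula for $\xi'(0)$.

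One caveat on your last step. With your $F$, the shift $w\mapsto w-v_1$ makes every term of the $(v_1,v_2)$-gradient pick up a minus sign, so $\langle\partial_{(v_1,v_2)}F((0,0),1),(v_1,v_2)\rangle=-N$, while $\partial_tF((0,0),1)=\phi''_{(w,z)}(1)=-D$, exactly as you say. Feeding these into the implicit function theorem formula you wrote gives $\langle\xi'(0),(v_1,v_2)\rangle=-(-N)/(-D)=-N/D$, not $N/D$; the asserted final sign does not follow from your own (correct) intermediate identifications. This discrepancy is with the statement as printed rather than with your method, and it is harmless downstream --- Proposition \ref{prp1} only uses that $\|\xi_k'(0)\|_*$ is controlled by a constant over $|D|$ with $|D|$ bounded away from zero --- but you should either track the sign consistently or note explicitly that only $|D|>0$ is needed.
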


\begin{proof}
{The proof is quite standard. We argue as in Lemma 3.7 in \cite{PDH} or Lemma 2.6 in \cite{MR3474407}. Let
\begin{eqnarray*}
{\mathcal F}_{(w,z)}\,:\, \mathbb R^+\times \mathcal H(\mc C)\to\mathbb R
\end{eqnarray*}
defined by
\begin{eqnarray*}
&{\mathcal F}_{(w,z)}(t,(v_1,v_2))=ta\|(v_1,v_2)-(w,z)\|^2+t^3b\|(v_1,v_2)-(w,z)\|^4\\
&-t^{q-1}\lambda \ds\int_\Omega f(x)|v_1(x,0)-w(x,0)|^qdx-t^{q-1}\mu\ds \int_\Omega g(x)|v_2(x, 0)-z(x, 0)|^qdx\\
&-2t^{2^*_s-1}\ds \int_{\Omega}|v_1(x, 0)-w(x, 0)|^\alpha|v_2(x, 0)-z(x, 0)|^\beta dx,
\end{eqnarray*}
for any $\left(t,(v_1,v_2)\right)\in  \mathbb R^+\times\mathcal H(\mc C)$. Observing that
\begin{equation*}
{\mathcal F}_{(w,z)}(1,(0,0))=0\mbox{ and since ${\mathcal N}^0_{\lambda,\mu}=\emptyset$, } \displaystyle\frac{\partial F}{\partial t}(1,(0,0))\neq 0,
\end{equation*}
one can apply the implicit function theorem to get the result.}
\end{proof}

\noi Now using the Lemma \ref{zii}, we prove the following proposition which can help us to extract a Palais-Smale sequence out of Nehari decompositions. 

\begin{prop}\label{prp1}
(i) Let $(\lambda, \mu) \in \mc M_{_{\Gamma_1}}$. Then there exists a minimizing sequence  $\{(w_k, z_k)\} \subset \mathcal{N}_{\lambda, \mu}$ such that
\[
 \mathcal I_{\lambda, \mu}(w_{k}, z_k) = \Theta_{\lambda, \mu}+o_k(1)\;\textrm{ and}\; {\|\mathcal I_{\lambda, \mu}^{'}(w_{k}, z_k)\|_*= o_k(1).}
\]

\noindent (ii) Let $(\lambda, \mu) \in \mc M_{_{\Gamma_3}}$ for some $\Gamma_3>0$. Then there exists a minimizing sequence  $\{(w_k, z_k)\} \subset \mathcal{N}_{\lambda, \mu}^-$  such that
\[
   \mathcal I_{\lambda, \mu}(w_{k}, z_k) = \Theta_{\lambda, \mu}^-+o_k(1)\;\textrm{ and}\; {\|\mathcal I_{\lambda, \mu}^{'}(w_{k}, z_k)\|_*= o_k(1),}
\]
where $\|\;\|_*$ is a  norm of the dual space of $\mc H(\mc C)$
\end{prop}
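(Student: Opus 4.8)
The plan is to obtain both minimizing sequences from Ekeland's variational principle applied on the relevant (sub)manifold, and then to upgrade the resulting ``almost minimality along the constraint'' into a genuine Palais--Smale condition in the full space by exploiting the local parametrization of Lemma \ref{zii}.

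For part (i), since $\mathcal I_{\lambda,\mu}$ is coercive and bounded below on $\mathcal N_{\lambda,\mu}$ by Lemma \ref{le44}, Ekeland's variational principle produces a sequence $\{(w_k,z_k)\}\subset\mathcal N_{\lambda,\mu}$ with $\mathcal I_{\lambda,\mu}(w_k,z_k)=\Theta_{\lambda,\mu}+o_k(1)$ and
\[
\mathcal I_{\lambda,\mu}(w,z)\ge \mathcal I_{\lambda,\mu}(w_k,z_k)-\tfrac1k\|(w,z)-(w_k,z_k)\|\quad\mbox{for all }(w,z)\in\mathcal N_{\lambda,\mu}.
\]
Because $\Theta_{\lambda,\mu}\le\Theta^+_{\lambda,\mu}<0=\mathcal I_{\lambda,\mu}(0,0)$ by Lemma \ref{3a}, the sequence stays bounded away from the origin, while coercivity bounds it above; thus $\|(w_k,z_k)\|$ is confined to a compact subinterval of $(0,\infty)$. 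Fixing a unit vector $(v_1,v_2)\in\mathcal H(\mathcal C)$, I would invoke Lemma \ref{zii} at $(w_k,z_k)$ to obtain, for small $\rho>0$, the admissible curve $\gamma_k(\rho):=\xi_k(\rho v_1,\rho v_2)\,(w_k-\rho v_1,\,z_k-\rho v_2)\in\mathcal N_{\lambda,\mu}$ with $\xi_k(0)=1$. Plugging $\gamma_k(\rho)$ into the Ekeland inequality, dividing by $\rho$ and letting $\rho\to0^+$, the chain rule gives $\gamma_k'(0)=\langle\xi_k'(0),(v_1,v_2)\rangle(w_k,z_k)-(v_1,v_2)$; crucially, since $(w_k,z_k)\in\mathcal N_{\lambda,\mu}$ the term $\langle\mathcal I'_{\lambda,\mu}(w_k,z_k),(w_k,z_k)\rangle=\phi'_{(w_k,z_k)}(1)$ vanishes, so we are left with $\langle\mathcal I'_{\lambda,\mu}(w_k,z_k),(v_1,v_2)\rangle\le\tfrac1k\big(\|\xi_k'(0)\|\,\|(w_k,z_k)\|+1\big)$. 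Replacing $(v_1,v_2)$ by its opposite and taking the supremum over unit vectors yields $\|\mathcal I'_{\lambda,\mu}(w_k,z_k)\|_*\le\tfrac{C}{k}\big(1+\|\xi_k'(0)\|\big)$.

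It then remains to bound $\|\xi_k'(0)\|$ uniformly. From the formula $\langle\xi'(0),(v_1,v_2)\rangle=N/D$ in Lemma \ref{zii}, the numerator obeys $|N|\le C\,\|(v_1,v_2)\|$ by the two-sided control of $\|(w_k,z_k)\|$ together with the trace inequality of Lemma \ref{22traceemb} and hypothesis \textbf{(fg)}, so the whole matter reduces to a uniform gap $|D_k|\ge c>0$. A direct computation (substituting the Nehari identity \eqref{22nlam} into \eqref{phid2}) identifies $D_k=-\phi''_{(w_k,z_k)}(1)$, so this is precisely a quantitative form of $\mathcal N^0_{\lambda,\mu}=\emptyset$. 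This uniform gap is the main obstacle: I would derive it by revisiting the estimates in the proof of Lemma \ref{l22.2}, where the \emph{strict} membership $(\lambda,\mu)\in\mathcal M_{\Gamma_1}$ gives a strictly positive lower bound for the analogue of $\mathcal T_{\lambda,\mu}$; since $\|(w_k,z_k)\|$ ranges in a compact interval of $(0,\infty)$, this transfers to $|\phi''_{(w_k,z_k)}(1)|\ge c>0$. With $|D_k|\ge c$ in hand, $\|\xi_k'(0)\|$ is uniformly bounded and hence $\|\mathcal I'_{\lambda,\mu}(w_k,z_k)\|_*=o_k(1)$, completing (i).

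For part (ii), I would run the identical scheme on $\mathcal N^-_{\lambda,\mu}$ with the choice $\Gamma_3:=\min\{\Gamma_1,\Gamma_2\}$, which simultaneously guarantees $\mathcal N^0_{\lambda,\mu}=\emptyset$ and, via the Case~1 fibering analysis of Lemma \ref{L37}, that $\mathcal N^-_{\lambda,\mu}\neq\emptyset$. By Corollary \ref{nlclosed} the set $\mathcal N^-_{\lambda,\mu}$ is closed, and as a subset of $\mathcal N_{\lambda,\mu}$ it inherits coercivity and the lower bound, so Ekeland's principle applies at the level $\Theta^-_{\lambda,\mu}$; Lemma \ref{nlw} keeps the sequence away from the origin. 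The only extra point is that the perturbed curve must stay in $\mathcal N^-_{\lambda,\mu}$ rather than merely in $\mathcal N_{\lambda,\mu}$: since $(w,z)\mapsto\phi''_{(w,z)}(1)$ is continuous and strictly negative on $\mathcal N^-_{\lambda,\mu}$, the curve $\gamma_k(\rho)$ retains a negative second derivative for $\rho$ small and therefore remains in $\mathcal N^-_{\lambda,\mu}$. The derivative estimate and the uniform bound on $\|\xi_k'(0)\|$ then carry over verbatim, producing the required Palais--Smale sequence at level $\Theta^-_{\lambda,\mu}$.
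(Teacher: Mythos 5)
Your proposal is correct and follows essentially the same route as the paper: Ekeland's variational principle on $\mathcal N_{\lambda,\mu}$ (resp.\ $\mathcal N^-_{\lambda,\mu}$), the local parametrization of Lemma \ref{zii} to pass from constrained almost-minimality to $\|\mathcal I'_{\lambda,\mu}(w_k,z_k)\|_*\le \frac{C}{k}(1+\|\xi_k'(0)\|_*)$, and the identification of the denominator $D_k$ with $(2^*_s-q)\mathcal T_{\lambda,\mu}(w_k,z_k)=-\phi''_{(w_k,z_k)}(1)$, bounded away from zero by revisiting Lemma \ref{l22.2}. Your additional remarks (the explicit choice $\Gamma_3=\min\{\Gamma_1,\Gamma_2\}$ and the continuity argument keeping the perturbed curve inside $\mathcal N^-_{\lambda,\mu}$) only make explicit what the paper leaves as ``the proof of (ii) is similar.''
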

\begin{proof} To avoid any repetition, we only prove the case $(i)$ of the above Proposition. The proof for the case $(ii)$ is similar. From Lemma \ref{le44}, $\mc I_{\lambda, \mu}$ is bounded below on $\mathcal N_{\lambda, \mu}$. So by Ekeland variational principle, there exists a minimizing sequence $\{(w_{k}, z_k)\}\in \mathcal N_{\lambda, \mu}$ such that
\begin{align}
\mathcal I_{\lambda, \mu}(w_{k}, z_k)&\leq \Theta_{\lambda, \mu}+\frac{1}{k},\label{Pek1}\\
\mathcal I_{\lambda, \mu}(\bar w, \bar z)&\geq \mc I_{\lambda, \mu}(w_{k}, z_k)- \frac{1}{k}\|(\bar w-w_k,\bar z-z_k)\|\;\;\mbox{for all}\;\;(\bar w, \bar z)\in \mc N_{\lambda, \mu}.\nonumber
\end{align}
Using equation \eqref{Pek1}, it is easy to show that $(w_{k}, z_k)\not\equiv 0$. Indeed, using \eqref{Pek1} and  Lemma \ref{3a}, we get 
\begin{equation}\label{psre12}
\begin{aligned}
&\frac{\Theta_{\lambda, \mu}}{2}\geq \mathcal I_{\lambda, \mu}(w_{k}, z_k)\geq \left(\frac{1}{2}-\frac{1}{2^*_s}\right)a(\|w_k\|^{2}+\|z_k\|^{2})\\
&-\left(\frac{1}{q}-\frac{1}{2^*_s}\right)\left(\lambda\int_{\Omega }  f(x)|w_k(x, 0)|^{q}dx+\mu \int_{\Omega }  g(x)|z_k(x, 0)|^{q}dx\right)
\end{aligned}
\end{equation}
which implies
\begin{equation}\label{psre13}
\begin{aligned}
&-\frac{2^*_sq}{2}\Theta_{\lambda, \mu}\leq \lambda\int_{\Omega }  f(x)|w_k(x, 0)|^{q}dx+\mu \int_{\Omega }  g(x)|z_k(x, 0)|^{q}dx\\
&\quad\quad\leq 
 (\kappa_s S(s, N))^\frac{-q}{2}\left((\lambda\|f\|_\gamma)^\frac{2}{2-q}+(\mu\|g\|_\gamma)^\frac{2}{2-q}\right)^\frac{2-q}{2}
  \|(w_k, z_k)\|^{q}.
\end{aligned}
\end{equation}
From \eqref{psre13}, we get immediately  
\[
\|(w_k, z_k)\|\geq \left( -\frac{2^*_sq}{2}\Theta_{\lambda, \mu}(\kappa_s S(s, N))^\frac{q}{2}\left((\lambda\|f\|_\gamma)^\frac{2}{2-q}+(\mu\|g\|_\gamma)^\frac{2}{2-q}\right)^\frac{q-2}{2}\right)^\frac{q}{2}.
\]
Besides this, combining \eqref{psre12} and \eqref{psre13}, we obtain 
\[
\|(w_k, z_k)\|\leq \left( \frac{2(2^*_s-q)}{q(2^*_s-2)}(\kappa_s S(s, N))^\frac{-q}{2}\left((\lambda\|f\|_\gamma)^\frac{2}{2-q}+(\mu\|g\|_\gamma)^\frac{2}{2-q}\right)^\frac{2-q}{2}\right)^\frac{1}{2-q}.
\]
 Next we claim that $\|\mathcal I_{\lambda, \mu}^\prime(w_{k}, z_k)\|_*\rightarrow 0$ as $k \rightarrow 0$.
Now, using the Lemma \ref{zii} we get the differentiable functions {$\xi_k:\mathcal{B}(0, \delta_k)\rightarrow \mathbb{R}$} for some {$\delta_k>0$} such that $\xi_k(\bar w, \bar z)(w_k-\bar w, z_k-\bar z)\in \mc {N}_{\lambda, \mu}$,\; $\textrm{for all}\;\; (\bar w, \bar z)\in \mathcal{B}(0,{ \delta_k}).$
For fixed $k$, choose {$0<\rho<\delta_k$}. Let $(w, z)\in \mc H(\mc C)$ with $(w, z)\not\equiv 0$ and let $(\bar w, \bar z)_\rho=\frac{\rho (w, z)}{\|(w, z)\|}$. We set $\eta_\rho=\xi_k((\bar w, \bar z)_\rho)((w_k, z_k)-(\bar w, \bar z)_\rho)$. Since $\eta_\rho \in \mc {N}_{\lambda, \mu}$, we get from  \eqref{22nlam}
\begin{align*}
\mathcal I_{\lambda, \mu}(\eta_\rho)-\mathcal I_{\lambda, \mu}(w_{k}, z_k)\geq-\frac{1}{k}\|\eta_\rho-(w_k,z_k)\|.
\end{align*}
Now by mean value theorem, we get
\begin{equation*}
\left\langle \mathcal I_{\lambda, \mu}^{\prime}(w_{k}, z_k),(\eta_\rho-(w_k,z_k))\right\rangle_*+o_k(\|(\eta_\rho-(w_k,z_k))\|)\geq -\frac{1}{k}\|(\eta_\rho-(w_k,z_k))\|.
\end{equation*}
Hence
\begin{align*}
&\left\langle \mathcal I_{\lambda, \mu}^{\prime}(w_{k}, z_k),-(\bar w ,\bar z)_\rho\right\rangle_*+(\xi_k(\bar w, \bar z)_\rho-1)\left\langle \mathcal I_{\lambda, \mu}^{\prime}(w_{k}, z_k),(w_{k}, z_k)-(\bar w, \bar z)_\rho\right\rangle_*\\
 &\geq -\frac{1}{k}\|(\eta_\rho -(w_k, z_k))\|+o(\|(\eta_\rho -(w_k, z_k))\|)
\end{align*}
and since $\left \langle \mathcal I_{\lambda, \mu}^{\prime}(\eta_\rho),((w_{k}, z_k)-(\bar w, \bar z)_\rho)\right\rangle_*=0$ by the fact that $\eta_\rho \in \mc {N}_{\lambda, \mu}$, we have
\begin{align*}
-\rho\left \langle \mathcal I_{\lambda, \mu}^{\prime}(w_{k}, z_k),\frac{(w, z)}{\|(w, z)\|}\right \rangle_*&+(\xi_k((\bar w, \bar z)_\rho)-1)\left\langle \mathcal I_{\lambda, \mu}^{\prime}(w_{k}, z_k)\right.\\&-\left.\mathcal I_{\lambda, \mu}^{\prime}(\eta_\rho),((w_{k}, z_k)-(\bar w, \bar z)_\rho)\right\rangle_*\\&\geq -\frac{1}{k}\|(\eta_\rho-(w_{k}, z_k))\|+o(\|(\eta_\rho-(w_{k}, z_k))\|).
\end{align*}
Thus
\begin{equation}\label{fifte}
\begin{aligned}
&\left\langle \mathcal I_{\lambda, \mu}^{\prime}(w_{k}, z_k),\frac{(w, z)}{\|(w, z)\|}\right\rangle_* \leq \frac{1}{k\rho}\|(\eta_\rho-(w_k, z_k))\|+\frac{o(\|(\eta_\rho-(w_k, z_k))\|)}{\rho}\\&+
\frac{(\xi_k((\bar w, \bar z)_\rho)-1)}{\rho}\left\langle  \mathcal I_{\lambda, \mu}^{\prime}(w_{k}, z_k)-\mathcal I_{\lambda, \mu}^{\prime}(\eta_\rho),((w_{k}, z_k)-(\bar w, \bar z)_\rho)\right\rangle_*.
\end{aligned}
\end{equation}
Since
$\displaystyle
\|(\eta_\rho-(w_{k}, z_k))\|\leq \rho|\xi_k((\bar w, \bar z)_\rho)|+|\xi_k((\bar w, \bar z)_\rho)-1|\|(w_{k}, z_k)\|
$
and
$
\displaystyle\lim_{\rho\rightarrow 0^+}\frac{|\xi_k((\bar w, \bar z)_\rho)-1|}{\rho}\leq \|\xi_k'(0)\|_*,$  taking limit  $\rho\rightarrow 0^+$\ in \eqref{fifte} and using \eqref{psre12} and \eqref{psre13}, we get
\begin{equation*}
\left\langle \mathcal I_{\lambda, \mu}^\prime(w_{k}, z_k),\frac{(w, z)}{\|(w, z)\|}\right\rangle_*\leq\frac{C}{k}(1+\|\xi_k^{'}(0)\|_*)
\end{equation*}
for some constant $C>0$, independent of $(w, z)$. So if we can show that $\|\xi_k^{'}(0)\|_*$ is bounded then we are done.
 Now from Lemma \ref{zii}, 
  using the boundedness of $\{(w_{k}, z_k)\}$ and H\"older's inequality, for some $K>0$, we get
\begin{align*}
\left\langle \xi^{\prime}(0),(\bar w, \bar z)\right\rangle_*= \frac{K\|(\bar w, \bar z)\|}{D},
\end{align*}
where $D$ is defined in Lemma \ref{zii}. Therefore, to prove the claim, we only need to prove that the denominator in the above expression is bounded away from zero. Suppose not. Then there exists a subsequence, still denoted by $\{(w_{k}, z_k)\}$, such that
\begin{align}\nonumber
&(2^*_s-2)a(\|w_k\|^{2}+\|z_k\|^{2}) + (2^*_s-4)b(\|w_k\|^{4}+\|z_k\|^{4})\\\label{baw}
 &-(2^*_s-q)\left(\lambda\int_{\Omega} f(x)|w_k(x,0)|^qdx+\mu\int_{\Omega} g(x)|z_k(x, 0)|^qdx\right)=o_k(1).
 \end{align}
From equation \eqref{baw} we get $\mc T_{\lambda, \mu}(w_{k}, z_k)=o_k(1)$. Now {following the proof of Lemma \ref{l22.2} we get $\mc T_{\lambda, \mu} (w_{k}, z_k)\geq C_1>0$ for all $k$ for some $C_1>0$ and $(\lambda, \mu)\in \mc M_{_{\Gamma_1}}$, which is a contradiction.}
\end{proof}
\section{Compactness of Palais-Smale sequences}

\noi Now we prove the following proposition which shows the compactness of Palais-Smale sequence {at sub-critical energy levels:}
\begin{prop}\label{crcmp}
Suppose $\{(w_k,z_k) \}$ be a sequence in $\mc H(\mathcal C)$ such that \\
\[
\mathcal I_{\lambda, \mu}(w_k, z_k)\rightarrow c \;\;\textrm{and}\;\; \mathcal I_{\lambda, \mu}^{\prime}(w_k, z_k)\rightarrow 0,\] where
\begin{equation}\label{clambda}
\begin{aligned}
&c<c_{\lambda, \mu}=\frac{2s}{N}\left(\frac12 a\kappa_sS(s,\alpha, \beta)\right)^\frac{N}{2s}-\\
&\left((\lambda\|f\|_\gamma)^\frac{2}{2-q}+(\mu\|g\|_\gamma)^\frac{2}{2-q}\right)^\frac{2(2-q)}{4-q}\frac{(4-q)(2^*_s-q)^\frac{4}{4-q}}{2(2^*_sq)}\left(\frac{b(\kappa_sS(s, N))^{-2}}{(2^*_s-4)}\right)^\frac{q}{4-q}
\end{aligned}
\end{equation}
is a positive constant, then there exists a strongly convergent subsequence.
\end{prop}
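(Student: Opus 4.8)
The plan is to run a Brezis--Lieb concentration analysis adapted to the double nonlocal structure, identify the defect of mass at the critical exponent, and then exploit the strict gap $c<c_{\la,\mu}$ to rule out any concentration. First I would prove that $\{(w_k,z_k)\}$ is bounded in $\mc H(\mc C)$: combining $\mc I_{\la,\mu}(w_k,z_k)=c+o_k(1)$ with $\langle\mc I_{\la,\mu}'(w_k,z_k),(w_k,z_k)\rangle_*=o_k(1)\|(w_k,z_k)\|$ and eliminating the critical integral exactly as in the proof of Lemma \ref{le44}, the coercive combination of the $a$--quadratic and $b$--quartic terms (here the hypothesis $2^*_s\ge 4$, i.e. $\frac14-\frac{1}{2^*_s}\ge0$, is used) bounds $\|(w_k,z_k)\|$. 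Passing to a subsequence I set $(w_k,z_k)\rightharpoonup(w,z)$ in $\mc H(\mc C)$, with $w_k(\cdot,0)\to w(\cdot,0)$, $z_k(\cdot,0)\to z(\cdot,0)$ strongly in every $L^r(\Om)$, $r<2^*_s$, and a.e. in $\Om$ (compactness of the trace embedding of Lemma \ref{22traceemb} below the critical exponent), and $\|w_k\|^2\to A\ge\|w\|^2$, $\|z_k\|^2\to B\ge\|z\|^2$ by weak lower semicontinuity.

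The genuinely nonlocal difficulty is that the Kirchhoff coefficients satisfy $M(\|w_k\|^2)\to M(A)$ and $M(\|z_k\|^2)\to M(B)$, \emph{not} $M(\|w\|^2),M(\|z\|^2)$. Testing $\mc I_{\la,\mu}'(w_k,z_k)\to 0$ against a fixed $(\varphi,\psi)\in\mc H(\mc C)$ and pairing numerical convergence of the coefficients with weak convergence of $(w_k,z_k)$, I obtain that $(w,z)$ solves the \emph{perturbed} system in which the coefficients of $(S_{\la,\mu})$ are frozen to the constants $M(A)$ and $M(B)$. Testing this limiting identity with $(w,z)$ itself gives the relation $M(A)\|w\|^2+M(B)\|z\|^2=\la\int_\Om f|w(\cdot,0)|^qdx+\mu\int_\Om g|z(\cdot,0)|^qdx+2\int_\Om|w(\cdot,0)|^\al|z(\cdot,0)|^\ba dx$.

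Next I apply Brezis--Lieb to the Hilbert norms, $\|w_k\|^2=\|w_k-w\|^2+\|w\|^2+o_k(1)$ (and likewise for $z$), and to the coupling, $\int_\Om|w_k(\cdot,0)|^\al|z_k(\cdot,0)|^\ba dx=\int_\Om|(w_k-w)(\cdot,0)|^\al|(z_k-z)(\cdot,0)|^\ba dx+\int_\Om|w(\cdot,0)|^\al|z(\cdot,0)|^\ba dx+o_k(1)$. Writing $d_1=A-\|w\|^2\ge0$, $d_2=B-\|z\|^2\ge0$ and letting $\ell$ be the limit of the tail coupling integral, subtracting the limiting identity of the previous paragraph from $\langle\mc I_{\la,\mu}'(w_k,z_k),(w_k,z_k)\rangle_*=o_k(1)$ (the subcritical terms cancel by strong $L^q$ convergence) yields the balance $M(A)d_1+M(B)d_2=2\ell$, while the coupled trace--Sobolev inequality \eqref{consys} applied to the vanishing sequence $(w_k-w,z_k-z)$ gives $d_1+d_2\ge\kappa_s S(s,\al,\ba)\,\ell^{\frac{2}{2^*_s}}$. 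Inserting both the norm and coupling splittings into $c=\lim\mc I_{\la,\mu}(w_k,z_k)$ and eliminating $\ell$ and the critical integral through the two relations above, one is led to the exact identity
\[
c=a\left(\tfrac12-\tfrac1{2^*_s}\right)(A+B)+b\left(\tfrac14-\tfrac1{2^*_s}\right)(A^2+B^2)-\left(\tfrac1q-\tfrac1{2^*_s}\right)\left(\la\int_\Om f|w(\cdot,0)|^qdx+\mu\int_\Om g|z(\cdot,0)|^qdx\right).
\]

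Finally, suppose for contradiction that the convergence is not strong, so the bubble is nontrivial and $\ell>0$. Combining $2\ell=M(A)d_1+M(B)d_2\ge a(d_1+d_2)$ with the Sobolev bound forces $\ell\ge\big(\tfrac12 a\kappa_s S(s,\al,\ba)\big)^{N/2s}$ and hence a lower bound on $A+B\ge d_1+d_2$; feeding this into the leading $a$--term above reproduces precisely the first term $\frac{2s}{N}\big(\tfrac12 a\kappa_s S(s,\al,\ba)\big)^{N/2s}$ of $c_{\la,\mu}$. The remaining Kirchhoff quartic and weight terms are then estimated below, using H\"older and the trace embedding constant $\kappa_s S(s,N)$ on the weights exactly as in \eqref{psre12}, to produce the negative second term of $c_{\la,\mu}$; this forces $c\ge c_{\la,\mu}$, contradicting the hypothesis. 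Therefore $d_1=d_2=0$, so $\|w_k\|\to\|w\|$ and $\|z_k\|\to\|z\|$, and together with weak convergence in the Hilbert space $\mc H(\mc C)$ this yields strong convergence. The main obstacle is exactly this last step: because the quartic part of $\widehat M$ produces cross terms coupling the weak limit to the concentrated mass, the threshold cannot be read off by a crude componentwise minimization; one must carry out the constrained one--variable optimization in the total mass $A+B$, in which the restriction $2^*_s\ge4$ and the quantitative dependence on $b$ both enter, so that the concentrated energy is genuinely bounded below by $c_{\la,\mu}$.
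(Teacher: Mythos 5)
Your proposal is correct and follows essentially the same route as the paper's proof: Brezis--Lieb splitting of the norms and of the coupled critical integral, the defect balance $M(A)d_1+M(B)d_2=2\ell$ obtained by testing $\mathcal I'_{\lambda,\mu}(w_k,z_k)$ against $(w_k-w,z_k-z)$, the coupled Sobolev inequality \eqref{consys} on the vanishing part to force $\ell\ge\big(\tfrac12 a\kappa_sS(s,\alpha,\beta)\big)^{N/2s}$, and finally the identity $c=\lim\big(\mathcal I_{\lambda,\mu}-\tfrac1{2^*_s}\langle\mathcal I'_{\lambda,\mu},\cdot\rangle\big)$ combined with the one-variable minimization of the quartic-minus-$t^q$ function to contradict $c<c_{\lambda,\mu}$. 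The only differences are notational (your $\ell$ is the paper's $\ell^{2^*_s}$, your $A,B$ are $\sigma_1^2,\sigma_2^2$) and the harmless intermediate step of recording the frozen-coefficient limit equation.
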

\begin{proof}
Let $\{(w_k, z_k)\}$ be a non-negative ( since $\mathcal I_{\lambda, \mu}(|w|,|z|)=\mathcal I_{\lambda, \mu}(w,z)$) $(PS)_c$ sequence for $\mc I_{\lambda, \mu}$ in $\mc H(\mathcal C)$ then it is easy to
 see that $\{(w_k, z_k)\}$ is bounded in $\mc H(\mathcal C)$. Therefore there exists non-negative 
 $(w_0, z_0)\in \mc H(\mathcal C)$ such that  
\begin{equation}\label{e2.4}
\begin{aligned}
w_k\rightharpoonup w_0 &\text{ and } z_k\rightharpoonup z_0\mbox{ in } L^{2^*_s}(\Omega \times\{0\}),\\
 \|w_k\|\rightarrow \sigma_1 &\text{ and } \|z_k\|\rightarrow \sigma_2, \text{ with } {\sigma^2=\sigma_1^2+\sigma_2^2} \\
w_k\rightarrow w_0 &\text{ and } z_k\rightarrow z_0\mbox{ in } L^p(\Omega\times\{0\})\text{ for any }p\in[1,2^*_s)\\
w_k(x, 0)\to w_0(x, 0)& \text{ and } z_k(x, 0)\to z_0(x, 0)\text{ a.e.  in }\Omega\times \{0\},\\
w_k\leq h_1 &\text{ and } z_k\leq h_2\text{ a.e.  in }\Omega \times \{0\},
\end{aligned}
\end{equation}
as $k\to\infty$, with $h_1, h_2\in L^p(\Omega\times\{0\})$ for a fixed $p\in[1,2^*_s)$.
If $\sigma=0$, we immediately see that $(w_k, z_k)\to (0, 0)$ in $\mathcal H(\mathcal C)$ as $k\to\infty$. Hence, let us assume that $\sigma>0$.

By \eqref{e2.4} and Brezis-Lieb Lemma \cite[Theorem 2]{MR699419} {(see also Lemma 8 in \cite{MR1698537})}, we have
\begin{align}\label{e2.9}
\|w_k\|^2&=\|w_k-w_0\|^2+\|w_0\|^2+o_k(1),\\\label{e2.9Inter}
\|z_k\|^2&=\|z_k-z_0\|^2+\|z_0\|^2+o_k(1),\\
\nonumber
\int_{\Omega}|w_k(x, 0)|^{\alpha}|z_k(x, 0)|^\beta dx&=\int_{\Omega}|w_k(x, 0)-w_0(x, 0)|^{\alpha}|z_k(x, 0)-z_0(x, 0)|^\beta dx
\\&+\int_{\Omega}|w_0|(x, 0)^{\alpha}|z_0(x, 0)|^\beta dx+o_k(1)\label{e2.09}
\end{align}
as $k\to\infty$.
Consequently, by $\mathcal{I}_{\lambda, \mu}^\prime(w_k, z_k)(w_k-w_0, z_k-z_0)=0$  as $k\to\infty$  together with
 \eqref{e2.4}, \eqref{e2.9}, \eqref{e2.09} and $\textbf{(fg)}$, we get
\begin{align*}
&(a+b\sigma_1^2)(\sigma_1^2-\|w_0\|^2)+(a+b\sigma_2^2)(\sigma_2^2-\|z_0\|^2)-\\
&\quad-\frac{2\alpha}{\alpha+\beta}\int_\Omega \left|w_k(x, 0)\right|^{\alpha-2}w_k(x, 0)|z_k(x, 0)|^\beta(w_k(x, 0)-w_0(x, 0)) dx\\&\quad-\frac{2\beta}{\alpha+\beta}\int_\Omega \left|w_k(x, 0)\right|^{\alpha}|z_k(x, 0)|^{\beta-2}z_k(x, 0)(z_k(x, 0)-z_0(x, 0))dx=o_k(1)\\
\end{align*}
which leads us us to 
\begin{equation}\label{I}
\begin{aligned}
(a+b\sigma_1^{2})&\lim_{k\to\infty}\|w_k-w_0\|^2+(a+b\sigma_2^{2})\lim_{k\to\infty}\|z_k-z_0\|^2\\=&2\lim_{k\to\infty}\int_{\Omega}|w_k(x, 0)-w_0(x, 0)|^{\alpha}|z_k(x, 0)-z_0(x, 0)|^\beta dx.
\end{aligned}
\end{equation}
{
Now, let us denote
{\begin{equation}\label{crlg}
\lim_{k\to\infty}\int_{\Omega}|w_k(x, 0)-w_0(x, 0)|^{\alpha}|z_k(x, 0)-z_0(x, 0)|^\beta dx=\ell^{2^*_s}.
\end{equation}}
Thus, from \eqref{I} and \eqref{crlg}, we get the following crucial formula
\begin{equation}\label{IF}
(a+b\sigma_1^{2})\lim_{k\to\infty}\|w_k-w_0\|^2+(a+b\sigma_2^{2})\lim_{k\to\infty}\|z_k-z_0\|^2=2\ell^{2^*_s}.
\end{equation}
Then, from \eqref{IF}, it is clear that $\ell\geq 0$. If $\ell=0$, since $\sigma>0$, by  \eqref{IF}, we have $(w_k, z_k)\to (w_0, z_0)$ in $\mathcal H(\mathcal C)$ as $k\to\infty$, concluding the proof.
Thus, let us assume by contradiction that $\ell>0$.}
By \eqref{consys} and \eqref{crlg},
{\begin{equation}\label{ukmuo}
\|w_k-w_0\|^2+\|z_k-z_0\|^2\geq \kappa_s S(s, \alpha, \beta)\ell^2.
\end{equation}}
Using the notation in \eqref{e2.4} and \eqref{IF}, together with \eqref{ukmuo}, we get
{\begin{equation}\label{ll}
2\ell^{2^*_s-2}\ge \kappa_s S(s, \alpha, \beta)(a+b\min(\sigma_1,\sigma_2)^{2}).
\end{equation}}
Noting that \eqref{IF} implies in particular that
\begin{equation}\label{ukuoai}
(a+b\sigma_1^2)\big(\sigma_1^2-\|w_0\|^2\big)+(a+b\sigma_2^2)\big(\sigma_2^2-\|z_0\|^2\big)= 2\ell^{2^*_s}.
\end{equation}
Using \eqref{ll} in \eqref{ukuoai} together with \eqref{ukmuo} it follows that
{
\begin{align*}
\big(2\ell^{2^*_s}\big)^{\frac{2^*_s-2}{2}}
&=\left((a+b\sigma_1^2)(\sigma_1^2-\|w_0\|^2)+(a+b\sigma_2^2)(\sigma_2^2-\|z_0\|^2)\right)^{\frac{2^*_s-2}{2}}\\
&\geq(a+b\min(\sigma_1,\sigma_2)^{2})^{\frac{2^*_s-2}{2}}\big(\ds \lim_{k\rightarrow \infty}\left(\|w_k-w_0\|^2+\|z_k-z_0\|^2\right)\big)^{\frac{2^*_s-2}{2}}\\
&\geq (a+b\min(\sigma_1,\sigma_2)^{2})^{\frac{2^*_s-2}{2}} (\kappa_sS(s, \alpha, \beta))^{\frac{2^*_s-2}{2}}\ell^{2^*_s-2}\\
&\geq \frac{1}{2}(a+b\min(\sigma_1,\sigma_2)^{2})^{\frac{2^*_s}{2}}(\kappa_sS(s, \alpha, \beta))^\frac{2^*_s}{2}.
\end{align*}}

From this, we obtain
\begin{equation}\label{bsub}
\begin{aligned}
(\sigma^2)^{\frac{2^*_s-2}{2}}&\ge\big((\sigma_1^2-\|w_0\|^2)+(\sigma_2^2-\|z_0\|^2)\big)^{\frac{2^*_s-2}{2}}\\&\ge
\frac{1}{2}(\kappa_sS(s, \alpha, \beta))^\frac{2^*_s}{2}(a+b\min(\sigma_1,\sigma_2)^{2})
\end{aligned}
\end{equation}
so that, we have
\begin{equation}\label{bsubr}
\begin{aligned}
\sigma^2&\geq (\kappa_sS(s, \alpha, \beta))^{\frac{N}{2s}}(\frac{1}{2}(a+b\min(\sigma_1,\sigma_2)^{2}))^\frac{2}{2^*_s-2}\\&\geq
(\kappa_sS(s, \alpha, \beta))^\frac{N}{2s}(\frac{a}{2})^\frac{2}{2^*_s-2}.
\end{aligned}
\end{equation} 
 Now, 
\begin{align*}
c&= \mc I_{\lambda, \mu}(w_k, z_k)-\frac{1}{2^*_s}\langle \mc I_{\lambda, \mu}^{\prime}(w_k, z_k), (w_k, z_k)\rangle+o_k(1)\\
&=\left(\frac12-\frac{1}{{2^*_s}}\right)a(\|w_k\|^{2}+\|z_k\|^{2})
+\left(\frac{1}{4}-\frac{1}{{2^*_s}}\right)b(\|w_k\|^{4}+\|z_k\|^{4})\\
&\quad-\left(\frac{1}{q}-\frac{1}{2^*_s}\right)\left(\lambda\int_{\Omega }  f(x)|w_k(x, 0)|^{q}dx+\mu \int_{\Omega }  g(x)|z_k(x, 0)|^{q}dx\right)+o_k(1)\\
&\geq \left(\frac12-\frac{1}{{2^*_s}}\right)a(\|w_k\|^{2}+\|z_k\|^{2})
+\left(\frac{1}{4}-\frac{1}{{2^*_s}}\right)b(\|w_k\|^{4}+\|z_k\|^{4})\\
&
 \quad-\left(\frac{1}{q}-\frac{1}{{2^*_s}}\right)(\kappa_s S(s, N))^\frac{-q}{2}\left((\lambda\|f\|_\gamma)^\frac{2}{2-q}+(\mu\|g\|_\gamma)^\frac{2}{2-q}\right)^\frac{2-q}{2}\times\\&\quad\quad\quad
  (\|w_k\|^{q}+ \|z_k\|^{q})+o_k(1).
\end{align*}
 Define
 \begin{align*}
F_b(t)&=\left(\frac{1}{4}-\frac{1}{{2^*_s}}\right)bt^{4}\\&
 - \left(\frac{1}{q}-\frac{1}{{2^*_s}}\right)(\kappa_s S(s, N))^\frac{-q}{2}\left((\lambda\|f\|_\gamma)^\frac{2}{2-q}+(\mu\|g\|_\gamma)^\frac{2}{2-q}\right)^\frac{2-q}{2}t^{q}
  \end{align*}
 and using the critical point arguments, we obtain
\begin{align*}
 F_b(t)&\geq -\left((\lambda\|f\|_\gamma)^\frac{2}{2-q}+(\mu\|g\|_\gamma)^\frac{2}{2-q}\right)^\frac{2(2-q)}{4-q}\times\\&\frac{(4-q)(2^*_s-q)^\frac{4}{4-q}}{4(2^*_sq)}\left(\frac{b(\kappa_sS(s, N))^{-2}}{(2^*_s-4)}\right)^\frac{q}{4-q}.
\end{align*}
 Hence using this estimate, we get
 \begin{align*}
c&+o_k(1)\geq \left(\frac12-\frac{1}{{2^*_s}}\right)a(\|w_k\|^{2}+\|z_k\|^{2})-\\&\left((\lambda\|f\|_\gamma)^\frac{2}{2-q}+(\mu\|g\|_\gamma)^\frac{2}{2-q}\right)^\frac{2(2-q)}{4-q}\frac{(4-q)(2^*_s-q)^\frac{4}{4-q}}{2(2^*_sq)}\left(\frac{b(\kappa_sS(s, N))^{-2}}{(2^*_s-4)}\right)^\frac{q}{4-q}.
 \end{align*}
Now, as $k\to\infty$, by  \eqref{bsubr}, we have
\begin{align*}
c&\geq\frac{2s}{N}\left(\frac12a\kappa_sS(s,\alpha, \beta)\right)^\frac{N}{2s}-\\
&\left((\lambda\|f\|_\gamma)^\frac{2}{2-q}+(\mu\|g\|_\gamma)^\frac{2}{2-q}\right)^\frac{2(2-q)}{4-q}\frac{(4-q)(2^*_s-q)^\frac{4}{4-q}}{2(2^*_sq)}\left(\frac{b(\kappa_sS(s, N))^{-2}}{(2^*_s-4)}\right)^\frac{q}{4-q}
 \end{align*}
which contradicts the assumption $c<c_{\lambda, \mu}$, considering \eqref{clambda}. This concludes the proof.\end{proof}
\section{Existence of first solution}
In this section we prove Theorem \ref{22mht1} $(i)$, by a minimization argument on $\mc N_{\lambda, \mu}$. Note that the $\Theta_{\lambda, \mu}<0$ from Lemma \ref{3a}.\\
\noindent \textbf{Proof of Theorem \ref{22mht1} (i)} Let us fix $\Gamma_4>0$ such that $c_{\lambda, \mu}>0$ for $(\lambda, \mu)\in \mathcal M_{_{\Gamma_4}}$.
 Assume $\Gamma_0=\min\{\Gamma_1, \Gamma_2, \Gamma_4\}$. Now as the functional $\mc I_{\lambda, \mu}$ is bounded below in $\mathcal N_{\lambda, \mu}$, we minimize  $\mc I_{\lambda, \mu}$
 in $\mathcal N_{\lambda, \mu}$. Using Proposition \ref{prp1} $(i)$, we get a minimizing Palais-Smale sequence such that $\mc I_{\lambda, \mu}(w_k, z_k)\rightarrow \Theta_{\lambda, \mu}$. Then it is easy to show that $\{(w_k, z_k)\}$ is bounded and therefore there exists $(w_0, z_0)\in \mc H(\mc C)$ such that $(w_k, z_k)\rightharpoonup (w_0, z_0)$ in $\mc H(\mc C)$. Now  for $(\lambda, \mu) \in \mathcal M_{_{\Gamma}}$, where $\Gamma\in(0, \Gamma_0)$ and using Lemma \ref{3a} and Proposition \ref{crcmp} we get  that $(w_k, z_k)\rightarrow (w_0, z_0)$ in $\mc H(\mc C)$ which implies that the $(w_0, z_0)$ is a minimizer of  $\mc I_{\lambda, \mu}$ in $\mc N_{\lambda, \mu}$  with $\mc I_{ \lambda, \mu}(w_0, z_0)<0$.
  Now we discuss some properties of this minimizer $(w_0, z_0).$\\
  
\noindent (i) $(w_0, z_0) \in \mathcal N_{\lambda, \mu}^+$ for $\Gamma\in (0,\Gamma_0)$.
\begin{proof}
 If not then $(w_0, z_0)\in \mc N_{\lambda, \mu}^-$. Note that using $(w_0, z_0)\in \mc N_{\lambda, \mu}$ and $\mc I_{ \lambda, \mu}(w_0, z_0)<0$ we get
 \[
 \lambda\ds \int_{\Omega} f(x)|w_0(x, 0)|^qdx+\mu\ds \int_{\Omega} g(x)|z_0(x, 0)|^qdx>0.
 \]
  Therefore from Lemma \ref{L37}, we get unique $t^-((w_0, z_0))>t^+((w_0, z_0))>0$ such that $t^-(w_0, z_0)\in \mc N_{\lambda, \mu}^-$ and $t^+(w_0, z_0)\in \mc N_{\lambda, \mu}^+$ which implies $t^-=1$ and $t^+<1$. Therefore we can find $t_0\in (t^+, t^-)$ such that
\begin{align*}
 \mathcal I_{\lambda, \mu}(t^+(w_0, z_0))&=\displaystyle\min_{0\leq t\leq t^-} \mathcal I_{\lambda, \mu}(t(w_0, z_0))<\mathcal I_{\lambda, \mu}(t_0(w_0, z_0))\\&\leq \mathcal I_{\lambda, \mu}(t^-(w_0, z_0))=\mathcal I_{\lambda, \mu}(w_0, z_0)=\Theta_{\lambda,\mu}
 \end{align*}
 which is a contradiction. Hence $(w_0, z_0)\in \mc N_{\lambda, \mu}^+.$ As $\mc I_{\lambda, \mu}(w, z)=\mc I_{ \lambda, \mu}(|w|, |z|)$, we can assume that $w_0\geq, z_0\ge 0$.
 \end{proof}
 
 \noindent (ii) $(w_0, z_0)$ is not semi trivial, that is,  $w_0\not\equiv0, z_0\not\equiv0.$
\begin{proof}
Suppose by contradiction that $z_0\equiv0.$ Then $w_0$ is a nontrivial non-negative solution of
\begin{equation*}
\left\{\begin{array}{rll}
 -\mathrm{div} (y^{1-2s}\nabla w)&=0, &\textrm{in}\;\; \mathcal{C},\\
 w &=0 &\text{on}\;\; \partial_L,\\
  M(\|w\|^2)\frac{\partial w}{\partial \nu}&= \lambda f(x)|w|^{q-2}w& \textrm{on}\;\; \Omega\times \{0\},
\end{array}
\right.
\end{equation*}
By the maximum principle \cite{MR2825595} we get $w_0>0$
in $E^1_{0}(\mathcal {C})$ and
{
\begin{eqnarray}\label{nonsemitrivial}
M(\|w_0\|^2)\|w_0\|^2=\lambda \int_{\Omega}f(x)|w_0(x, 0)|^qdx>0.
\end{eqnarray}
Moreover, we can
choose $\tilde z_0\in E^1_{0}(\mathcal {C})\backslash\{0\}$ such
that
{\begin{eqnarray*}
\int_{\Omega }g(x)|\tilde z_0(x, 0)|^qdx>0.
\end{eqnarray*}
Since $0<q<2$ and from the mean value theorem, there exists $t_0>0$ such that $z_0^*=t_0\tilde z_0$ satisfies
\begin{eqnarray}\label{non-semitrivial}
M(\|z_0^*\|^2)\|z_0^*\|^2=\mu \int_{\Omega }g(x)|z_0^*(x, 0)|^qdx>0.
\end{eqnarray}}
Therefore,
\begin{equation}\label{posin1}
0< a(\|w_0\|^2+ \|z_0^*\|^2)\leq \lambda \int_{\Omega}f(x)|w_0(x, 0)|^qdx+\mu \int_{\Omega }g(x)|z_0^*(x, 0)|^qdx.
\end{equation}
 Hence by Lemma \ref{L37} there exist unique
$0<t^+<{t_*}<t^-$ such that $(t^+w_0,t^+z_0^*)\in
\mathcal {N}_{\lambda,\mu}^+$
and
$$\mathcal{I}_{\lambda,\mu}(t^+w_0,t^+z_0^*)=\inf_{0\leq
t\leq t^-}\mc I_{\lambda,\mu}(tw_0,tz_0^*).$$
 From \eqref{siutd}, one can show that the positive zero $t_*$ of $\psi^\prime_{w, z}(t)=0$ satisfies the following
\[
t_{*} \geq \left(\frac{(2^*_s-q)\left(\lambda\ds \int_{\Omega} f(x)|w_0(x, 0)|^qdx+\mu\ds \int_{\Omega} g(x)|z_0^*(x, 0)|^qdx\right)}{a(2^*_s-2)(\|w_0\|^{2}+\|z_0^*\|^{2})}\right)^{\frac{1}{2-q}}.
\]
Using \eqref{posin1}, it is easy to see that $t_*>1$. {Using \eqref{nonsemitrivial} and \eqref{non-semitrivial}} we have
\begin{eqnarray*}
\mc I_{\lambda,\mu}(w_0,z_0^*)<
\mc I_{\lambda,\mu}(w_0,0).
\end{eqnarray*}}
Therefore,
$$
\Theta^+_{\lambda,\mu}\leq
\mc I_{\lambda,\mu}(t^+w_0,t^+z_0^*)\leq
\mc I_{\lambda,\mu}(w_0,z_0^*)<
\mc I_{\lambda,\mu}(w_0,0)=\Theta^+_{\lambda,\mu}$$
which is a contradiction. Hence {$w_0\not \equiv 0 $ and $z_0\not \equiv 0 $}. Now using the fact that $M(t)$ is positive for $t>0$ and strong maximum principle (see Lemma 2.6 \cite{MR2825595}),  we get $w_0>0, z_0>0$.
\end{proof} 
\noindent(iii) $(w_0, z_0)$ is indeed a local minimizer of $\mc I_{\lambda, \mu}$ in $\mc H(\mathcal C)$.
\begin{proof} 
Since $(w_0,z_0) \in \mathcal N_{\lambda, \mu}^{+}$, we have $t^+=1
<t_*$. Hence by continuity of $(w,z)\mapsto t_*$, {for
$\ell>0$ small enough, there exists $\delta=\delta(\ell)>0$ such that $1+\ell< t_*(w_0-w, z_0-z)$
for all $\|(w, z)\|<\delta$}. Also, from Lemma \ref{zii}, for $\delta>0$
small enough, we obtain a $C^1$ map $t: \mc {B}(0,\delta)\rightarrow \mathbb R^+$
such that $t(w_0-w, z_0-z)\in  \mathcal N_{\lambda, \mu}$, $t(0)=1$. Therefore, for
{$\ell>0$ and $\delta=\delta(\ell)>0$ small enough,} we have {$t^+(w_0-w, z_0-z)=
t(w, z)<1+\ell<t_*(w_0-w, z_0-z)$ for all $\|(w, z)\|<\delta$}. Since $t_*(w_0-w, z_0-z)>1$,
we obtain $\mathcal I_{\lambda,\mu}(w_0, z_0)\leq \mathcal I_{\lambda, \mu}(t^+(w_0-w, z_0-z))\leq \mathcal I_{\lambda, \mu}(w_0-w, z_0-z)$
for all $\|(w, z)\|<\delta$. This shows that $(w_0, z_0)$ is a local minimizer for
$\mathcal I_{\lambda, \mu}$ in $\mc H(\mathcal C)$.
\end{proof} 

\section{Existence of second solution in $\mathcal N_{\lambda, \mu}^-$}
\noi Now we show the existence of second solution in $\mathcal N_{\lambda, \mu}^-$. 
Consider $\mc P=\{x\in \Omega\;|\;f(x)>0\}\cap \{x\in \Omega\;|\;g(x)>0\}$ has a positive measure. Consider the test functions as
$\eta \in C_c^{\infty}(\mathcal C_{\mc P})$, where {$\mathcal C_{\mc P}=\mc P\times [0, \infty)$} such that $0\leq \eta(x,y)\leq 1$
in $\mathcal C_{\mc P}$ and
\[
(\mathrm {supp}\;f^+\times \{y>0\})\cap (\mathrm {supp}\; g^+\times \{y>0\})\cap\{(x,y)\in \mathcal C_{\mc P}: \eta=1\}\neq \emptyset.
\]
Moreover, for $\rho>0$ small, $\eta(x,y)=1$ on {$\mc B_{\rho}(0)^+$} and $\eta(x,y)=0$ on $\mc B^c_{2\rho}(0)$. We take $\rho$ small enough such that $\mc B_{2\rho}(0)\subset \mathcal C_{\mc P}$. Consider $( w_{\epsilon,\eta, \alpha}=\eta \sqrt{\alpha} w_\epsilon, w_{\epsilon,\eta, \beta}=\eta \sqrt{\beta} w_\epsilon)\in \mc H(\mathcal C)$, where $w_\epsilon=E_s(u_\epsilon)$  and $u_\epsilon$ is defined  in Lemma \ref{22traceemb}.
Then, we have the following lemma.
\begin{lem}\label{II}
Let $(w_{0}, z_0)$ be the local minimum for the functional $\mc I_{\lambda, \mu}$ in $\mc H(\mc C)$. Then for every $r>0$ and a.e. $\eta \in C_c^{\infty}(\mathcal C_{\mc P})$\; there exist $\epsilon_{0} = \epsilon_{0}(r, \eta) > 0$ and $\Gamma_*>0$ such that
\begin{equation*}
\mathcal I_{\lambda}(w_{0}+r\;w_{\epsilon,\eta, \alpha}, z_{0}+r\;w_{\epsilon,\eta, \beta} ) <c_{\lambda, \mu}
\end{equation*}
for $b \in (0,\epsilon_{0}),\; \epsilon \in (0,\epsilon_{0}) $ and $(\lambda, \mu)\in \mc M_{_{\Gamma_*}}.$
\end{lem}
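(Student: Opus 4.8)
The plan is to reduce the statement to a one–variable maximization along the bubble direction and then compare the resulting value with the compactness threshold $c_{\lambda,\mu}$ from \eqref{clambda}. Set $g(r):=\mc I_{\lambda,\mu}(w_0+r\,w_{\epsilon,\eta,\alpha},\,z_0+r\,w_{\epsilon,\eta,\beta})$. Since $g(0)=\mc I_{\lambda,\mu}(w_0,z_0)=\Theta_{\lambda,\mu}<0$ by Lemma \ref{3a}, and $g(r)\to-\infty$ as $r\to\infty$ (for $b$ small the critical term dominates), it suffices to bound $\max_{r\ge0}g(r)<c_{\lambda,\mu}$, which in particular gives the claimed inequality for every fixed $r>0$. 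The coefficients $\sqrt\alpha,\sqrt\beta$ are chosen precisely so that, as $\epsilon\to0$, the pair $(w_{\epsilon,\eta,\alpha},w_{\epsilon,\eta,\beta})$ realizes the optimal ratio in the coupled constant $S(s,\alpha,\beta)$ of Lemma \ref{optsys}.

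First I would expand $g(r)$ term by term. Writing $a_\epsilon:=\|\eta w_\epsilon\|^2$ and $b_\epsilon:=\int_\Omega|\eta w_\epsilon(x,0)|^{2^*_s}\,dx$, the choice of coefficients yields $\|w_{\epsilon,\eta,\alpha}\|^2+\|w_{\epsilon,\eta,\beta}\|^2=(\alpha+\beta)a_\epsilon$ and $\int_\Omega|w_{\epsilon,\eta,\alpha}(x,0)|^\alpha|w_{\epsilon,\eta,\beta}(x,0)|^\beta\,dx=\alpha^{\alpha/2}\beta^{\beta/2}b_\epsilon$. From the extremal asymptotics of \cite{MR2911424,MR3117361} I use that $a_\epsilon/b_\epsilon^{2/2^*_s}\to\kappa_s S(s,N)$, that the mixed Dirichlet products $\ld w_0,w_{\epsilon,\eta,\alpha}\rd$ and $\ld z_0,w_{\epsilon,\eta,\beta}\rd$ are $O(\epsilon^{(N-2s)/2})$, and that the bubble tends to $0$ in $L^q(\Omega)$, so the concave weight integrals converge to their values at $(w_0,z_0)$. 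For the coupled critical term, non-negativity of $w_0,z_0,w_\epsilon$ together with the elementary inequality proved in the Appendix gives $\int_\Omega(w_0+r w_{\epsilon,\eta,\alpha})^\alpha(z_0+r w_{\epsilon,\eta,\beta})^\beta\,dx\ge\int_\Omega w_0^\alpha z_0^\beta\,dx+\alpha^{\alpha/2}\beta^{\beta/2}r^{2^*_s}b_\epsilon+o_\epsilon(1)$ (traces at $y=0$ understood). Collecting these, $\max_{r\ge0}g(r)$ is, up to an $O(b)$ remainder and $o_\epsilon(1)$, the maximum of the explicit function
\[
G_b(r)=\Theta_{\lambda,\mu}+\frac{a}{2}(\alpha+\beta)a_\epsilon r^2+\frac{b}{4}(\alpha^2+\beta^2)a_\epsilon^2 r^4-\frac{2}{2^*_s}\alpha^{\alpha/2}\beta^{\beta/2}b_\epsilon r^{2^*_s}.
\]

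For $b=0$ the quantity $G_0(r)-\Theta_{\lambda,\mu}$ is maximized at an explicit value of $r$, and a direct computation using $a_\epsilon/b_\epsilon^{2/2^*_s}\to\kappa_s S(s,N)$ together with Lemma \ref{optsys} gives $\max_{r\ge0}\big(G_0(r)-\Theta_{\lambda,\mu}\big)=\frac{2s}{N}\left(\frac12 a\kappa_s S(s,\alpha,\beta)\right)^{\frac{N}{2s}}+o_\epsilon(1)$, which is exactly the first term of $c_{\lambda,\mu}$ in \eqref{clambda}. It then remains to check that $\Theta_{\lambda,\mu}$ plus the $O(b)$ Kirchhoff correction stays strictly below the second, subtracted term of $c_{\lambda,\mu}$. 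This comparison is the technical heart of the proof: the subtracted term in \eqref{clambda} was produced in Proposition \ref{crcmp} from exactly the same competition between the Kirchhoff quartic and the concave power (the auxiliary function $F_b$ there), so the two corrections are of matching type. Fixing $b\in(0,b_0)$ small and then shrinking $\mc M_{\Gamma_*}$ makes all remainders small enough for the strict inequality $\max_{r\ge0}g(r)<c_{\lambda,\mu}$ to hold, after which $\epsilon_0=\epsilon_0(r,\eta)$ is chosen to absorb the $o_\epsilon(1)$ terms.

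The principal obstacle is the Kirchhoff quartic term $\frac{b}{4}(\alpha^2+\beta^2)a_\epsilon^2 r^4$ in $G_b$. For $b>0$ the maximizer solves $G_b'(r)=0$, an algebraic equation of degree $2^*_s$ in $r$ whose positive root has no closed form unless $2^*_s\in\{4,6\}$ — precisely the exceptional pairs $(N,s)$ listed in the Remark after Theorem \ref{22mht1}. In the general case one cannot evaluate this maximum exactly, so the $br^4$ term must be estimated crudely and paid for by taking $b$ small; this is the genuine origin of the restriction $b\in(0,b_0)$ in Theorem \ref{22mht1}(ii). Controlling this term against the subtracted part of $c_{\lambda,\mu}$ uniformly over $(\lambda,\mu)\in\mc M_{\Gamma_*}$ is the step demanding the most care.
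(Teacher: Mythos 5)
There is a genuine gap, and it sits exactly at what you call the ``technical heart'' of the argument. The strict inequality $\max_{r}\,g(r)<c_{\lambda,\mu}$ cannot be obtained from $\Theta_{\lambda,\mu}<0$ together with an $o_\epsilon(1)$ bookkeeping of the remaining terms. Writing $\Lambda=\big((\lambda\|f\|_\gamma)^{2/(2-q)}+(\mu\|g\|_\gamma)^{2/(2-q)}\big)^{(2-q)/2}$, the coercivity estimate of Lemma \ref{le44} gives $|\Theta_{\lambda,\mu}|\le C\,\Lambda^{2/(2-q)}$, while the subtracted part of $c_{\lambda,\mu}$ in \eqref{clambda} is of order $\Lambda^{4/(4-q)}b^{q/(4-q)}$; since $2/(2-q)>4/(4-q)$ for $q\in(1,2)$, shrinking $\mc M_{\Gamma_*}$ makes $|\Theta_{\lambda,\mu}|$ vanish \emph{faster} than the quantity it is supposed to dominate, so your final comparison ``$\Theta_{\lambda,\mu}+O(b)+o_\epsilon(1)<-(\mbox{subtracted term})$'' fails uniformly on $\mc M_{\Gamma_*}$ (and, worse, the $o_\epsilon(1)$ errors such as the truncation defect $O(\epsilon^{N-2s})$ in $\|\eta w_\epsilon\|^2$ are positive and independent of $(\lambda,\mu)$, so they cannot be absorbed by $|\Theta_{\lambda,\mu}|$ near the origin of the parameter set).

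The missing idea is the Brezis--Nirenberg interaction term, which is precisely what the Appendix inequality \eqref{apbp} is built to extract and which your lower bound for the coupled critical term throws away into $o_\epsilon(1)$. Keeping the cross terms $C_2mn^{s-1}q^t+C_3 q^{t-1}n^{s}p$ produces a contribution $-C_5 r^{2^*_s-1}\int_\Omega |w_\epsilon(x,0)|^{2^*_s-1}dx\le -C_9\,\epsilon^{(N-2s)/2}$ (using $w_0,z_0>0$ on the support of $\eta$ and the last estimate in \eqref{estimates}). Since $(N-2s)/2<\min\{1,\,N-2s\}$ in the regime $2s<N\le 4s$, this negative term dominates the positive truncation errors $O(\epsilon^{N-2s})$ \emph{and} the Kirchhoff quartic correction once one couples $b$ to $\epsilon$ (the paper takes $b=\epsilon$, which is also how the restriction $b\in(0,b_0)$ actually arises --- not from solving the degree-$2^*_s$ equation, which is sidestepped by showing only that the maximizer $t_\epsilon$ stays in a fixed interval $[t_1,t_2]$). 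This yields $\max_t G(t)\le \frac{2s}{N}\big(\tfrac12 a\kappa_s S(s,\alpha,\beta)\big)^{N/2s}-\de_\epsilon$ with a fixed $\de_\epsilon>0$ for small $\epsilon$, and only then is $\Gamma_*$ shrunk so that the $(\lambda,\mu)$-dependent subtracted term in $c_{\lambda,\mu}$ is smaller than $\de_\epsilon$. Your leading-order computation (the identification of $\frac{2s}{N}(\tfrac12 a\kappa_s S(s,\alpha,\beta))^{N/2s}$ via the optimal ratio $\sqrt\alpha:\sqrt\beta$ and Lemma \ref{optsys}) is correct and matches the paper, but without the $\epsilon^{(N-2s)/2}$ term the strict inequality does not follow.
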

\begin{proof}
Using the definition of $\mc I_{\lambda, \mu}$, we open the proof as follows.
\begin{align*}  
&\mathcal I_{\lambda, \mu}(w_{0} + r\;w_{\epsilon,\eta, \alpha}, z_{0} + r\;w_{\epsilon,\eta, \beta})=\frac{a}{2}(\| w_{0} + r\;w_{\epsilon,\eta, \alpha}\|^{2}+\| z_{0} + r\;w_{\epsilon,\eta, \beta}\|^{2}) \\
&+ \frac{b}{4}(\| w_{0} + r\;w_{\epsilon,\eta, \alpha}\|^{4}+\| w_{0} + r\;w_{\epsilon,\eta, \beta}\|^{4})- \frac{\lambda}{q}\int_{\Omega} f(x)|w_{0}+ r\;w_{\epsilon, \eta, \alpha}|^{q}dx\\
&-\frac{\mu}{q}\int_{\Omega} g(x)|z_{0}+ r\;w_{\epsilon, \eta, \beta}|^{q}dx- \frac{2}{2^*_s}\int_{\Omega} |w_{0} + r\;w_{\epsilon,\eta, \alpha}|^\alpha |z_{0} + r\;w_{\epsilon,\eta, \beta}|^\beta dx\\
 \end{align*}
Using the fact that $(w_{0}, z_0)$ is a positive solution of problem $(S_{\lambda, \mu})$, we get
 \begin{align*}
 &\mathcal I_{\lambda,\mu}(w_{0} + r\;w_{\epsilon,\eta, \alpha}, z_{0} + r\;w_{\epsilon,\eta, \beta})\leq  \mc I_{\lambda, \mu}(w_0, z_0) +\frac{a}{2}r^{2}(\|w_{\epsilon, \eta, \alpha}\|^2+\|w_{\epsilon, \eta, \beta}\|^2)\\&+ \frac{b}{4}r^{4}(\|w_{\epsilon, \eta, \alpha}\|^{4}+\|w_{\epsilon, \eta, \beta}\|^{4})+\frac{3}{2}{b}r^{2}(\| w_{0}\|^{2}\|w_{\epsilon, \eta, \alpha}\|^2+\| z_{0}\|^{2}\|w_{\epsilon, \eta, \beta}\|^2)
 \\&+ b\;r^{3} (\| w_{0}\|\|w_{\epsilon, \eta, \alpha}\|^{3}+\| z_{0}\|\|w_{\epsilon, \eta, \beta}\|^{3})\\&
  -\frac{\lambda}{q} \int_{\Omega} f(x)( |w_{0} + r\;w_{\epsilon,\eta, \alpha}|^{q}-|w_0|^q-qr|w_{0}|^{q-1} w_{\epsilon,\eta, \alpha} )(x,0)dx\\&-\frac{\mu}{q} \int_{\Omega} g(x)( |z_{0} + r\;w_{\epsilon,\eta, \beta}|^{q}-|z_0|^q-qr|z_{0}|^{q-1} w_{\epsilon,\eta, \beta} )(x,0)dx\\
 &- \frac{2}{{2^*_\alpha}}\int_{\Omega} \left(|w_0+rw_{\epsilon,\eta, \alpha}|^{\alpha}|z_0+rw_{\epsilon,\eta, \beta}|^{\beta}-|w_0|^{\alpha}|z_0|^{\beta}\right.\\
 &\quad \quad \quad \quad\left. -{\alpha}r|w_0|^{\alpha-1}|z_0|^\beta w_{\epsilon,\eta, \alpha}-{\beta}r|w_0|^{\alpha}|z_0|^{\beta-1} w_{\epsilon,\eta, \beta}\right)(x,0) dx.
 \end{align*}
Note that $\mathrm {supp} \;(w_{\epsilon, \eta, \alpha})=\mathrm {supp} \;(w_{\epsilon, \eta, \beta}) \subset \mc C_{\mc P}$ and $f(x), g(x)>0$ for $(x, 0) \in \mc C_{\mc P}$. 
Now
  based on 
  \begin{equation}\label{apbp}
  (a+b)^t\geq a^t+b^t+ta^{t-1}b+C_1ab^{t-1},\; a, b>0,\; t>2
  \end{equation}
we use the following inequality
 \[
 (m+n)^s(p+q)^t\geq m^{s}p^{t}+n^sq^t+sm^{s-1}np^t+tm^sp^{t-1}q+C_2mn^{s-1}q^t+C_3q^{t-1}n^sp, 
 \]
 for some non-negative constants $C_1, C_2, C_3$ and $p, q, m, n\geq 0,\; s, t\geq 2$ (see Appendix for the proof of \eqref{apbp}). Now using the above inequality together with Lemma \ref{3a} and  Young's inequality we get the following
\begin{align*}
&\mathcal I_{\lambda, \mu}(w_{0} + r\;w_{\epsilon,\eta, \alpha}, z_{0} + r\;w_{\epsilon,\eta, \beta})\leq \frac{a}{2}r^{2}(\|w_{\epsilon, \eta, \alpha}\|^2+\|w_{\epsilon, \eta, \beta}\|^2)\\&+ \frac{7}{4}{b}r^{4}(\|w_{\epsilon, \eta, \alpha}\|^{4}+\|w_{\epsilon, \eta, \beta}\|^4)-\frac{2}{{2^*_s}}r^{2^*_s}\int_{\Omega}|w_{\epsilon,\eta, \alpha}(x,0)|^{\alpha}|w_{\epsilon,\eta, \beta}(x,0)|^{\beta}dx\\&-{C_5r^{2^*_s-1}\int_{\Omega}|w_{\epsilon}(x,0)|^{2^*_s-1}dx}+ 2C_4b , 
\end{align*}
 where $C_4=(\|w_0\|^4+\|z_0\|^4)$ and $C_5$ is obtained on combining $C_2$ and $C_3$. Now assume
 \begin{align*}
 G(t)&=\frac{a}{2}t^{2}(\|w_{\epsilon, \eta, \alpha}\|^2+\|w_{\epsilon, \eta, \beta}\|^2)+ \frac74{b}t^{4}(\|w_{\epsilon, \eta, \alpha}\|^{4}+\|w_{\epsilon, \eta, \beta}\|^{4})\\&-\frac{2}{{2^*_s}}t^{2^*_s}\int_{\Omega}|w_{\epsilon,\eta, \alpha}(x,0)|^{\alpha}|w_{\epsilon,\eta, \beta}(x,0)|^{\beta}dx-C_5t^{2^*_s-1}\int_{\Omega}|w_{\epsilon}(x,0)|^{2^*_s-1}dx.
 \end{align*}
 \noi  Since $\displaystyle \lim_{t\rightarrow \infty} G(t)=-\infty$ and $\displaystyle \lim_{t\rightarrow 0^+} G(t)>0$.
 Therefore there exists $t_\epsilon>0$ such that
 \begin{align}\label{mm}
 G(t_\epsilon)=\displaystyle \sup_{t\geq 0}g(t)\;\textrm{and}\; \frac{d}{dt}G(t)\mid_{t=t_\epsilon}=0.
 \end{align}
 From \eqref{mm}, we get the following
\begin{equation}\label{der0}
 \begin{aligned}
 &{a}t_\epsilon (\|w_{\epsilon, \eta, \alpha}\|^2+\|w_{\epsilon, \eta, \beta}\|^2)+{7b}t_\epsilon^{3}(\|w_{\epsilon, \eta, \alpha}\|^{4}+\|w_{\epsilon, \eta, \beta}\|^{4})\\&=
2t_\epsilon ^{2^*_s-1}\ds \int_{\Omega}|w_{\epsilon,\eta, \alpha}(x,0)|^{\alpha}|w_{\epsilon,\eta, \beta}(x,0)|^{\beta}dx+C_6r^{2^*_s-2}\int_{\Omega}|w_{\epsilon}(x,0)|^{2^*_s-1}dx.
\end{aligned}
\end{equation}
 From \eqref{der0}, it is clear that $t_\epsilon$ is bounded from below. Moreover,
 \begin{equation}\label{bab}
  \begin{aligned}
& \frac{a}{t_\epsilon^2}(\|w_{\epsilon, \eta, \alpha}\|^2+\|w_{\epsilon, \eta, \beta}\|^{2})+7b (\|w_{\epsilon, \eta, \alpha}\|^{4}+\|w_{\epsilon, \eta, \beta}\|^{4})\\&=
    2{t_\epsilon^{2^*_s-4}}\ds \int_{\Omega}|w_{\epsilon,\eta, \alpha}(x,0)|^{\alpha}|w_{\epsilon,\eta, \beta}(x,0)|^{\beta}dx+C_6r^{2^*_s-5}\int_{\Omega}|w_{\epsilon}(x,0)|^{2^*_s-1}dx.
 \end{aligned}
 \end{equation}
 Since {$2^*_s\geq 4$}, $t_\epsilon$ is bounded above as well. If not then left hand side of \eqref{bab} is bounded for large values of $t_\epsilon$ , while right hand side goes to $+\infty$ as $t\rightarrow +\infty$, which a contradiction. Hence form the above arguments, there exist positive constants $t_1, t_2$, independent of $\lambda$ such that $0<t_1\leq t_\epsilon\leq t_2<\infty$. \\
 \noi Now from \cite{MR2911424, MR3117361} we get that the family $\{w_\epsilon\}$ and its trace on $\{y=0\}$ satisfy
\begin{equation}\label{estimates}
\begin{aligned}
  \|\eta w_\epsilon\|^2&=\|w_\epsilon\|^2+O(\epsilon^{n-2s})\\
  \displaystyle \int_{\Omega}|\eta w_\epsilon(x,0)|^{2^*_\alpha}dx&=\ds \int_{\mathbb R^N}\frac{1}{(1+|x|^2)^N} dx+O(\epsilon  ^N)\\
{\frac{\|\eta w_\epsilon\|^2}{\|\eta w_\epsilon\|^2_{2^*_s}}}&=\kappa_s S(s, N)+O(\epsilon^{N-2s})\\
\displaystyle \int_{\Omega}|\eta w_\epsilon(x,0)|^{2^*_s-1}&\geq C_7\epsilon^\frac{N-2s}{2}dx .
\end{aligned}
\end{equation}

Now using the estimates of \eqref{estimates}, we estimate second and fourth term in $G(t)$ as follows
\begin{align*}
&\frac74{b}t^{4}(\|w_{\epsilon, \eta, \alpha}\|^{4}+\|w_{\epsilon, \eta, \beta}\|^{4})-C_5t^{2^*_s-1}\int_{\Omega}|w_{\epsilon}(x,0)|^{2^*_s-1}dx\\
&\leq \frac74{b}t_2^{4}(\|w_{\epsilon, \eta, \alpha}\|^{4}+\|w_{\epsilon, \eta, \beta}\|^{4})-C_5t_1^{2^*_s-1}\int_{\Omega}|w_{\epsilon}(x,0)|^{2^*_s-1}dx\\
&\leq C_8b( S(s, N)^\frac{2N}{s}+O(\epsilon^{N-2s}))-C_9\epsilon^\frac{N-2s}{2}.
\end{align*}
Now we assume $b=\epsilon$ to get the following
\begin{align*}
&\frac74{b}t^{4}(\|w_{\epsilon, \eta, \alpha}\|^{4}+\|w_{\epsilon, \eta, \beta}\|^{4})-C_5t^{2^*_s-1}\int_{\Omega}|w_{\epsilon}(x,0)|^{2^*_s-1}dx\\&\leq  C_{10}\epsilon+C_{11}\epsilon^{N+1-2s}-C_9\epsilon^\frac{N-2s}{2}.
\end{align*}
Now using these estimates we get
 \begin{align*}
 &\displaystyle \sup_{t\geq 0}G(t)=G(t_\epsilon )\leq \displaystyle\left({a}(\alpha+\beta)t_\epsilon^{2}\|\eta w_{\epsilon}\|^2
 -\frac{2}{2^*_s}t_\epsilon^{2^*_s}\int_{\Omega}\alpha^\frac{\alpha}{2}{\beta}^\frac{\beta}{2} |\eta w_{\epsilon}(x,0)|^{2^*_s}dx\right)
 \\&\quad+C_{10}\epsilon+C_{11}\epsilon^{N+1-2s}-C_9\epsilon^\frac{N-2s}{2}\\
 &\leq \left(\frac{1}{2}-\frac{1}{2^*_s}\right)\left(\frac{1}{2}\right)^\frac{2}{2^*_s-2}(a\kappa_s )^\frac{2^*_s}{2^*_s-2}\left[\left(\frac{\alpha}{\beta}\right)^\frac{\beta}{\alpha+\beta}+
 \left(\frac{\beta}{\alpha}\right)^\frac{\alpha}{\alpha+\beta}\right]^\frac{2^*_s}{2^*_s-2}S(s, N)^\frac{2^*_s}{2^*_s-2}\\&\quad+O(\epsilon^{N-2s})+C_{10}\epsilon+C_{11}\epsilon^{N+1-2s}-C_9\epsilon^\frac{N-2s}{2},
  \end{align*}
where $C_9, C_{10}, C_{11}>0$ are positive constants independent of $\epsilon, \lambda$.
Now using Lemma \ref{optsys}, we get that
\begin{align*}
&\mathcal I_{\lambda, \mu}(w_{0} + r\;w_{\epsilon,\eta, \alpha}, z_{0} + r\;w_{\epsilon,\eta, \beta})
\leq \frac{2s}{N}\left(\frac12(a\kappa_s S(s, \alpha, \beta ))\right)^\frac{N}{2s}\\&+C_{12}\epsilon^{N-2s}+C_{10}\epsilon+C_{11}\epsilon^{N+1-2s}-C_9\epsilon^\frac{N-2s}{2}+2\epsilon R^4\\
&\leq \frac{2s}{N}\left(\frac12(a\kappa_s S(s, \alpha, \beta ))\right)^\frac{N}{2s}+C_{12}\epsilon^{N-2s}+C_{13}\epsilon+C_{11}\epsilon^{N+1-2s}-C_9\epsilon^\frac{N-2s}{2}.
\end{align*}
{Now if we choose $\epsilon$ in $w_{\epsilon}$ as $\epsilon^{1/p}$ for $p\geq N-2s$, then the above inequality will read as }
\begin{align*}
&\mathcal I_{\lambda, \mu}(w_{0} + r\;w_{\epsilon,\eta, \alpha}, z_{0} + r\;w_{\epsilon,\eta, \beta})\\
&\leq \frac{2s}{N}\left(\frac12(a\kappa_s S(s, \alpha, \beta ))\right)^\frac{N}{2s}+{C_{12}\epsilon^\frac{N-2s}{p}+C_{13}\epsilon+C_{11}\epsilon^\frac{N+p-2s}{p}
-C_9\epsilon^\frac{N-2s}{2p}}.
\end{align*}
Since $$\frac{N-2s}{p}=\ds \min\left\{\frac{N-2s}{p}, 1, \frac{N+p-2s}{p}\right\},$$
we get
\begin{align*}
&\mathcal I_{\lambda, \mu}(w_{0} + r\;w_{\epsilon,\eta, \alpha}, z_{0} + r\;w_{\epsilon,\eta, \beta})\\&\leq \frac{2s}{N}\left(\frac12(a\kappa_s S(s, \alpha, \beta ))\right)^\frac{N}{2s}+C_{14}\epsilon^\frac{N-2s}{p}-C_9\epsilon^\frac{N-2s}{2p}.
\end{align*}
Now observe that there exists $\epsilon_0>0$ sufficiently small  such that $C_{14}\epsilon^\frac{N-2s}{p}-C_9\epsilon^\frac{N-2s}{2p}>0$ for $\epsilon\in (0, \epsilon_0)$. Hence, for $\epsilon\in (0, \epsilon_0)$, there exist $\lambda, \mu>0$ satisfying the following inequality
\begin{align*}
&q(2-q)\left({(4-q)}\right)^\frac{2}{2-q}(a\kappa_sS(s, N))^\frac{q}{q-2}\left(\left(\lambda\|f\|_\gamma \right)^\frac{2}{2-q}+
\left(\mu\|g\|_\gamma\right)^\frac{2}{2-q}\right)
\\&\quad\quad\leq \epsilon^\frac{N-2s}{2p}(C_{14}\epsilon^\frac{N-2s}{2p}-C_9).
\end{align*}
Therefore, we can find a $\Gamma_*>0$ such that for $\epsilon\in (0, \epsilon_0)$ and $(\lambda, \mu)\in \mc M_{_{\Gamma_*}}$,
$\mathcal I_{\lambda, \mu}(w_{0} + r\;w_{\epsilon,\eta, \alpha}, z_{0} + r\;w_{\epsilon,\eta, \beta})\leq  c_{\lambda, \mu}$
 This proves the Proposition.
\end{proof}

 Now consider the following
\begin{eqnarray*}
  W_{1} &=& \left\{(w,z) \in \mc H(\mathcal C)\setminus\{0\} \big{|} \frac{1}{\|(w, z)\|}t^{-}\left(\frac{(w, z)}{\|(w, z)\|}\right) > 1\right\} \cup \{0\}, \\
   W_{2} &=& \left\{(w, z) \in \mc H(\mathcal C)\setminus\{0\} \big{|} \frac{1}{\|(w, z)\|}t^{-}\left(\frac{(w, z)}{\|(w, z)\|}\right) < 1\right\}.
\end{eqnarray*}
Then $\mc N_{\lambda, \mu}^{-}$ disconnects $ \mc H(\mathcal C)$ in two connected components $W_{1}$ and $W_{2}$
and $ \mc H(\mathcal C)\setminus \mc N_{\lambda, \mu}^{-} = W_{1} \cup W_{2}.$ For each $(w, z) \in \mc N_{\lambda, \mu}^{+},$ we have $1< t_{\max}((w, z)) < t^{-}((w, z)).$
 Since $t^{-}((w, z)) = \frac{1}{\|(w, z)\|}t^{-}\left(\left(\frac{(w, z)}{\|(w, z)\|}\right)\right),$ then $\mc N_{\lambda, \mu}^{+} \subset W_{1}.$
In particular, $(w_0, z_0) \in W_{1}.$  Now we claim that there exists $l_0>0$ such that {$(w_{0} + l_{0}w_{\epsilon, \eta}, z_0+l_0w_{\epsilon, \eta}) \in W_{2}. $}

 \noi First, we find a constant $c>0$ such that
 \[0 < t^{-} \left(\left(\frac{(w_{0}+l\;w_{\epsilon,\eta},z_{0}+l\;w_{\epsilon,\eta})}{\|(w_{0}+l\;w_{\epsilon,\eta}, z_{0}+l\;w_{\epsilon,\eta})\|}\right)\right)<c, {\quad \forall\, l>0}.\]
  Otherwise, there exists a sequence $\{l_{k}\}$ such that, as $k\rightarrow \infty$, $l_{k} \rightarrow \infty$ and {$t^{-}\left(\left(\frac{(w_{0}+l_k\;w_{\epsilon,\eta},z_{0}+l_k\;w_{\epsilon,\eta})}{\|(w_{0}+l_k\;w_{\epsilon,\eta}, z_{0}+l_k\;w_{\epsilon,\eta})\|}\right)\right) \rightarrow \infty$}. Let
  \[(\bar w_k, \bar z_k) = \frac{(w_{0}+l_k\;w_{\epsilon,\eta},z_{0}+l_k\;w_{\epsilon,\eta})}{\|(w_{0}+l_k\;w_{\epsilon,\eta}, z_{0}+l_k\;w_{\epsilon,\eta})\|}.\]
   Since $t^{-}((\bar w_k, \bar z_k))(\bar w_k, \bar z_k) \in \mc N_{\lambda, \mu}^{-} \subset \mc N_{\lambda, \mu}$ and by the Lebesgue dominated convergence theorem,
\begin{align*}
 \ds \lim_{k \rightarrow \infty}&\int_{\Omega}|\bar w_k(x,0)|^\alpha|\bar z_k(x, 0)|^\beta\\
 &=\ds \lim_{k \rightarrow \infty}\frac{\int_{\Omega} |(w_{0}+l_{k}\;w_{\epsilon,\eta})(x,0)|^{\alpha}(z_{0}+l_{k}\;w_{\epsilon,\eta})(x, 0)|^\beta dx}{\|(w_{0}+l_{k}\;w_{\epsilon,\eta},z_{0}+l_{k}\;w_{\epsilon,\eta} )\|^{2_s^*}} \\
 &=\ds \lim_{k \rightarrow \infty} \frac{\int_{\Omega} \vert ({w_{0}}/{l_{k}}+w_{\epsilon,\eta})(x,0)\vert ^{\alpha} \vert ({z_{0}}/{l_{k}}+w_{\epsilon,\eta})(x,0)\vert ^{\beta} dx}{\|({w_{0}}/{l_k}+\;w_{\epsilon,\eta},{z_{0}}/{l_k}+\;w_{\epsilon,\eta} )\|^{2_s^*}} \\
   &=\frac{ \int_{\Omega}(w_{\epsilon, \eta}(x,0))^{2_s^*}dx}{\|(w_{\epsilon, \eta},w_{\epsilon, \eta}) \|^{2_s^*}}.
\end{align*}
Now
\begin{align*}
  &\mathcal{I}_{\lambda, \mu}(t^{-}((\bar w_k, \bar z_k))(\bar w_k, \bar z_k)) = \frac{1}{2}a(t^{-}((\bar w_k, \bar z_k)))^2(\|\bar w_k\|^2+\|\bar z_k\|^2)\\&+\frac{1}{4}b(t^{-}(\bar w_k, \bar z_k))^4(\|\bar w_k\|^4+ \|\bar z_k)\|^4)-\frac{(t^{-}((\bar w_k, \bar z_k)))^{q}}{q}\lambda \int_{\Omega}f(x) |\bar w_k(x,0)|^q\;dx\\&-\frac{(t^{-}((\bar w_k, \bar z_k)))^{q}}{q}\mu \int_{\Omega}g(x) |\bar z_k(x,0)|^q\;dx \\&- \frac{2(t^{-}((\bar w_k, \bar z_k)))^{2^*_s}}{2^*_s}\int_{\Omega} |\bar w_{k}(x,0)|^{\alpha}|\bar z_{k}(x,0)|^{\beta}dx \rightarrow - \infty\;\; \textrm{as}\;\; k \rightarrow \infty,
\end{align*}
this contradicts that $\mc I_{\lambda, \mu}$ is bounded below on $\mc N_{\lambda, \mu}.$ \\
\noi Let
\begin{equation*}
    l_{0} = \frac{|c^{2}-\|(w_{0}, z_0)\|^{2}|^{\frac{1}{2}}}{\|(w_{\epsilon,\eta}, w_{\epsilon, \eta})\|} + 1,
\end{equation*}
then
\begin{align*}
  &\|(w_{0}+l_{0}w_{\epsilon,\eta}, z_{0}+l_{0}w_{\epsilon,\eta})\|^{2} = \|w_{0}\|^{2}+ \|z_{0}\|^{2} + 2(l_{0})^{2}\|w_{\epsilon,\eta}\|^{2}\\
  &+2l_{0}\langle w_{0}, w_{\epsilon,\eta}\rangle+ 2l_{0}\langle z_{0}, w_{\epsilon,\eta}\rangle>\|(w_{0}, z_0)\|^{2} + |c^{2}-\|(w_{0}, z_0)\|^{2}|\\& + 2l_{0}\langle w_{0}, w_{\epsilon,\eta}\rangle+2l_{0}\langle z_{0}, w_{\epsilon,\eta}\rangle>c^{2}>t^{-}\left(\left(\frac{(w_{0}+l_{0}w_{\epsilon,\eta},z_{0}+l_{0}w_{\epsilon,\eta} )}{\|(w_{0}+l_{0}w_{\epsilon,\eta}, z_{0}+l_{0}w_{\epsilon,\eta})\|}\right)\right)^{2}
\end{align*}
that is $(w_{0}+l_{0}w_{\epsilon,\eta}, z_{0}+l_{0}w_{\epsilon,\eta}) \in W_{2}.$\\
\noindent \textbf{Proof of Theorem \ref{22mht1} (ii):}
Let us fix $\Gamma_{00}=\ds \min\{\Gamma_0, \Gamma_3, \Gamma_*\}$
and define a path connecting $W_1$ and $W_2$ as $\gamma_0(t) = (w_0+t\;l_{0}w_{\epsilon,\eta}, z_0+tl_{0}w_{\epsilon,\eta})$ for $t \in [0,1],$ then there exists $t_{0} \in (0,1)$ such that $(w_0+t_0\;l_{0}w_{\epsilon,\eta}, z_0+t_0l_{0}w_{\epsilon,\eta}) \in \mc N_{\lambda, \mu}^{-}$. Therefore, by Lemma \ref{II},
    $$
    \Theta_{\lambda, \mu}^- \leq \mc I_{\lambda, \mu}(w_0+t_0\;l_{0}w_{\epsilon,\eta}, z_0+t_0l_{0}w_{\epsilon,\eta})< c_{\lambda,\mu}
    $$
for $0<\Gamma<\Gamma_{00}$.
Now from Proposition \ref{prp1} $(ii)$, there exists a Palais-Smale sequence
$\{(w_{k}, z_k)\} \subset \mc N_{\lambda, \mu}^{-}$ such that $\mc I_{\lambda, \mu}((w_{k}, z_k))\rightarrow \Theta_{\lambda, \mu}^-$. Since $\Theta_{\lambda, \mu}^{-} <c_{\lambda, \mu}$, by Proposition \ref{crcmp}, upto a subsequence there exists  $(w^0, z^0)$ in  $\mc H(\mathcal C)$ such that $(w_{k}, z_k) \rightarrow (w^0, z^0)$ strongly in $ \mc H (\mathcal C).$ Now using Corollary \ref{nlclosed}, $(w^0, z^0) \in \mc N_{\lambda, \mu}^{-}$ and $ \mc I_{\lambda, \mu}(w^0, z^0) = \Theta_{\lambda, \mu}^{-}.$ Therefore $(w^0, z^0)$ is also a solution. Moreover, $\mc I_{\lambda, \mu}(w, z)=\mc I_{\lambda, \mu}(|w|, |z|)$, we may assume that $w^0\geq 0, z^0\geq 0$. Again using the similar argument as in case of first solution and strong maximum principle (see Lemma 2.6 of \cite{MR2825595}) we conclude that  $w^0>0, z^0>0$ is a second positive solution
 of the problem $(S_{\lambda, \mu})$. Since $\mc N_{\lambda, \mu} ^+\cap \mc N_{\lambda, \mu}^-=\emptyset$, $(w_0, z_0)$ and $(w^0, z^0)$ are distinct. This proves Theorem \ref{22mht1}.
\section*{Appendix}
\textbf{Proof of inequality \eqref{apbp}}:
To prove \eqref{apbp}, it is enough  to show that 
\begin{equation}\label{6.11}
(1+m)^p\geq 1+m^p+pm+C_1m^{p-1}
\end{equation}
for $m>0$. Equivalently, it is enough (divide \eqref{6.11} by $m^p$ and put $t=1/m$) to prove the following inequality to show \eqref{6.11} 
\begin{equation}\label{6.12}
(1+t)^p\geq t^p+1+pt^{p-1}+C_1t
\end{equation} 
for $t>0$. Let us introduce $\mathcal G:(0, \infty)\to \mathbb R$ as 
\[
\mathcal G(t)=\frac{(1+t)^p-t^p-1-pt^{p-1}}{t}
\] 
Then it is easy to show that $\mathcal O(t)=(1+t)^p-t^p-1-pt^{p-1}> 0$ for $t>0$ and $p>2$ (one can prove by showing the monotonicity of $\mathcal O(t)$ and observing $\mathcal O(0)=0$).
Now we have the following behavior of $\mathcal G(t)$
\begin{equation}\label{6.13}
\displaystyle \lim_{t\to 0^+}\mathcal G(t)=p\;\; \text{ and } \displaystyle \lim_{t\to +\infty}\mathcal G(t)=+\infty.
\end{equation}
From \eqref{6.13}, using the continuity of $\mathcal G(t)$, there exists some $C_1>0$ ( may depend on $p$) such that $\mathcal G(t)\geq C_1$ which proves \eqref{6.12} and consequently \eqref{apbp}.

\end{document}